\newcommand{\ba}{\ensuremath{\mathbf a}}
\newcommand{\bn}{\ensuremath{\mathbf n}}
\newcommand{\bp}{\ensuremath{\mathbf p}}
\newcommand{\bq}{\ensuremath{\mathbf q}}
\newcommand{\bu}{\ensuremath{\mathbf u}}
\newcommand{\bv}{\ensuremath{\mathbf v}}
\newcommand{\R}{\ensuremath{\mathbb R}}
\newcommand{\K}{\ensuremath{\mathbb K}}
\newcommand{\D}{\ensuremath{\mathcal{D}}}
\renewcommand{\subset}{\subseteq}
\renewcommand{\geq}{\geqslant}
\renewcommand{\leq}{\leqslant}
\DeclareMathOperator{\Id}{Id}
\DeclareMathOperator{\diag}{diag}
\newtheorem{theorem}{Theorem}[section]
\newtheorem{lemma}[theorem]{Lemma}
\newtheorem{corollary}[theorem]{Corollary}
\newtheorem{proposition}[theorem]{Proposition}
\newtheorem{utheorem}{\textrm{\textbf{Theorem}}}
\theoremstyle{definition}
\newtheorem{definition}[theorem]{Definition}
\newtheorem{remark}[theorem]{Remark}
\newtheorem{example}[theorem]{Example}
\numberwithin{equation}{section}
\begin{document}
\vspace*{-4mm}

\title[Blowup-polynomial of a metric space; stable polynomials and
graphs]{The blowup-polynomial of a metric space:\\ connections to stable
polynomials, graphs and their distance spectra}

\author{Projesh Nath Choudhury}
\address[P.N.~Choudhury]{Department of Mathematics, Indian Institute of
Technology Gandhinagar, Palaj, Gandhinagar 382055, India}
\email{\tt projeshnc@iitgn.ac.in}

\author{Apoorva Khare}
\address[A.~Khare]{Department of Mathematics, Indian Institute of Science;
Analysis and Probability Research Group; Bangalore 560012, India}
\email{\tt khare@iisc.ac.in}

\subjclass[2010]{26C10, 05C31 (primary); %
30L05, 15A15, 05C12, 05C50 (secondary)}

\keywords{Metric space,
distance matrix,
blowup-polynomial,
multi-affine polynomial,
real-stable polynomial,
delta-matroid,
distance spectrum of a graph,
metric geometry,
Zariski density.}

\begin{abstract}
To every finite metric space $X$, including all connected unweighted
graphs with the minimum edge-distance metric, we attach an invariant that
we call its blowup-polynomial $p_X(\{ n_x : x \in X \})$. This is
obtained from the blowup $X[{\bf n}]$ -- which contains $n_x$ copies of
each point $x$ -- by computing the determinant of the distance matrix of
$X[{\bf n}]$ and removing an exponential factor. We prove that as a
function of the sizes $n_x$, $p_X({\bf n})$ is a polynomial, is
multi-affine, and is real-stable. This naturally associates a hitherto
unstudied delta-matroid to each metric space $X$; we produce another
novel delta-matroid for each tree, which interestingly does not
generalize to all graphs.

We next specialize to the case of $X = G$ a connected unweighted graph --
so $p_G$ is ``partially symmetric'' in $\{ n_v : v \in V(G) \}$ -- and
show three further results:
(a)~We show that the polynomial $p_G$ is indeed a graph invariant, in
that $p_G$ and its symmetries recover the graph $G$ and its isometries,
respectively.
(b)~We show that the univariate specialization $u_G(x) := p_G(x,\dots,x)$
is a transform of the characteristic polynomial of the distance matrix
$D_G$; this connects the blowup-polynomial of $G$ to the well-studied
``distance spectrum'' of $G$.
(c)~We obtain a novel characterization of complete multipartite graphs,
as precisely those for which the ``homogenization at $-1$'' of $p_G({\bf
n})$ is real-stable (equivalently, Lorentzian, or strongly/completely
log-concave), if and only if the normalization of $p_G(-{\bf n})$ is
strongly Rayleigh.
\end{abstract}

\date{\today}
\maketitle

\vspace*{-2mm}

\tableofcontents
\settocdepth{section}

\section{The blowup-polynomial of a metric space and its distance matrix}

This work aims to provide novel connections between metric geometry, the
geometry of (real) polynomials, and algebraic combinatorics via partially
symmetric functions. In particular, we introduce and study a polynomial
graph-invariant for each graph, which to our knowledge is novel.

\subsection{Motivations}

The original motivation for our paper came from the study of distance
matrices $D_G$ of graphs $G$ -- on both the algebraic and spectral sides:
\begin{itemize}
\item On the algebraic side, Graham and Pollak \cite{Graham-Pollak}
initiated the study of $D_G$ by proving: if $T_k$ is a tree on $k$ nodes,
then $\det D_{T_k}$ is independent of the tree structure and depends only
on $k$. By now, many variants of such results are proved, for trees as
well as several other families of graphs, including with $q$-analogues,
weightings, and combinations of both of these. (See e.g.\ \cite{CK-tree}
and its references for a list of such papers, results, and their common
unification.)

\item Following the above work~\cite{Graham-Pollak}, Graham also
worked on the spectral side, and with Lov\'asz, studied in~\cite{GL} the
distance matrix of a tree, including computing its inverse and
characteristic polynomial. This has since led to the intensive study of
the roots, i.e.\ the ``distance spectrum'', for trees and other graphs.
See e.g.\ the survey~\cite{AH} for more on distance spectra.
\end{itemize}

A well-studied problem in spectral graph theory involves understanding
which graphs are \textit{distance co-spectral} -- i.e., for which graphs
$H' \not\cong K'$, if any, do $D_{H'}, D_{K'}$ have the same spectra.
Many such examples exist; see e.g.\ the references in~\cite{DL}. In
particular, the characteristic polynomial of $D_G$ does not ``detect''
the graph $G$. It is thus natural to seek some other byproduct of $D_G$
which does -- i.e., which recovers $G$ up to isometry. In this paper, we
find such a (to our knowledge) novel graph invariant: a multivariate
polynomial, which we call the \textit{blowup-polynomial of $G$}, and
which does detect $G$. Remarkably, this polynomial turns out to have
several additional attractive properties:
\begin{itemize}
\item It is multi-affine in its arguments.
\item It is also real-stable, so that its ``support'' yields a hitherto
unexplored delta-matroid.
\item The blowup-polynomial simultaneously encodes the determinants of
\textit{all} graph-blowups of $G$ (defined presently), thereby connecting
with the algebraic side (see the next paragraph).
\item Its ``univariate specialization'' is a transformation of the
characteristic polynomial of $D_G$, thereby connecting with the spectral
side as well.
\end{itemize}
Thus, the blowup-polynomial that we introduce, connects distance spectra
for graphs -- and more generally, for finite metric spaces -- to other
well-studied objects, including real-stable/Lorentzian polynomials and
delta-matroids.

On the algebraic side, a natural question involves asking if there are
graph families $\{ G_i : i \in I \}$ (like trees on $k$ vertices) for
which the scalars $\det (D_{G_i})$ behave ``nicely'' as a function of $i
\in I$. As stated above, the family of blowups of a fixed graph $G$
(which help answer the preceding ``spectral'' question) not only answer
this question positively as well, but the nature of the answer --
multi-affine polynomiality -- is desirable in conjunction with its
real-stability. In fact, we will obtain many of these results, both
spectral and algebraic, in greater generality: for arbitrary finite
metric spaces.

The key construction required for all of these contributions is that of a
blowup, and we begin by defining it more generally, for arbitrary
metric spaces that are discrete (i.e., every point is isolated).

\begin{definition}\label{Ddef}
Given a metric space $(X,d)$ with all isolated points, and a function
$\bn : X \to \mathbb{Z}_{>0}$, the \emph{$\bn$-blowup} of $X$ is the
metric space $X[\bn]$ obtained by creating $n_x := \bn(x)$ copies of each
point $x$ (also termed blowups of $x$).
Define the distance between copies of distinct points $x \neq y$ in $X$
to still be $d(x,y)$, and between distinct copies of the same point to be
$2 d(x,X \setminus \{x\}) = 2 \inf_{y \in X \setminus \{ x \}} d(x,y)$.

Also define the \emph{distance matrix} $D_X$ and the \emph{modified
distance matrix} $\D_X$ of $X$ via:
\begin{equation}
D_X := (d(x,y))_{x,y \in X}, \qquad
\D_X := D_X + \diag(2 d(x, X \setminus \{ x \}))_{x \in X}.
\end{equation}
\end{definition}

Notice for completeness that the above construction applied to a
non-discrete metric space does not yield a metric; and that blowups of
$X$ are ``compatible'' with isometries of $X$ (see~\eqref{Eisom}).
We also remark that this notion of blowup seems to be relatively less
studied in the literature, and differs from several other variants in the
literature -- for metric spaces e.g.~\cite{metric} or for graphs
e.g.~\cite{Liu}. However, the variant studied in this paper was
previously studied for the special case of unweighted graphs, see
e.g.~\cite{HHN,KKOT,KSS} in extremal and probabilistic graph theory.

\subsection{Defining the blowup-polynomial; Euclidean embeddings}

We now describe some of the results in this work, beginning with metric
embeddings. Recall that the complete information about a (finite) metric
space is encoded into its distance matrix $D_X$ (or equivalently, in the
off-diagonal part of $\D_X$). Metric spaces are useful in many
sub-disciplines of the mathematical sciences, and have been studied for
over a century. For instance, a well-studied question in metric geometry
involves understanding metric embeddings. In 1910, Fr\'echet
showed~\cite{Frechet0} that every finite metric space with $k+1$ points
isometrically embeds into $\R^k$ with the supnorm. Similarly, a
celebrated 1935 theorem of Schoenberg \cite{Schoenberg35} (following
Menger's works~\cite{Menger0,Menger}) says:

\begin{theorem}[Schoenberg, \cite{Schoenberg35}]\label{Tschoenberg}
A finite metric space $X = \{ x_0, \dots, x_k \}$ isometrically embeds
inside Euclidean space $(\R^r, \| \cdot \|_2)$ if and only if its
modified Cayley--Menger matrix
\begin{equation}
(d(x_0,x_i)^2 + d(x_0,x_j)^2 - d(x_i,x_j)^2)_{i,j=1}^k
\end{equation}
is positive semidefinite, with rank at most $r$.
\end{theorem}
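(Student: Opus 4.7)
The plan is to view the modified Cayley--Menger matrix as twice a Gram matrix anchored at $x_0$, and to reduce both directions to the polarization identity. The key observation is that an isometric embedding $f : X \to \R^r$ is only determined up to rigid motion, so we may translate so that $f(x_0) = 0$; once this is done, the $k \times k$ indexing of the matrix (which omits $x_0$) matches the $k$ nonzero position vectors $f(x_1), \dots, f(x_k)$.

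For the forward direction, assume such an $f$ exists with $f(x_0) = 0$. Writing $\|f(x_i)\|_2^2 = d(x_0,x_i)^2$ and applying polarization
\[
d(x_i,x_j)^2 = \|f(x_i) - f(x_j)\|_2^2 = \|f(x_i)\|_2^2 + \|f(x_j)\|_2^2 - 2\langle f(x_i), f(x_j)\rangle,
\]
a direct substitution shows that the $(i,j)$-entry of the matrix in the theorem equals $2\langle f(x_i), f(x_j)\rangle$. Hence the matrix equals $2 F^T F$, where $F$ is the $r \times k$ matrix whose columns are $f(x_1), \dots, f(x_k)$; such a matrix is positive semidefinite with rank at most $r$.

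For the converse, given that the matrix $M$ is PSD of rank at most $r$, I would apply the spectral theorem (or a Cholesky-type factorization) to write $M = 2 F^T F$ for some real $r \times k$ matrix $F$. Define $f(x_0) := 0$ and $f(x_i) := F e_i$ for $1 \le i \le k$, where $e_i$ is the $i$th standard basis vector of $\R^k$. Verifying that $f$ is an isometry reduces to two computations: the diagonal relation $M_{ii} = 2 d(x_0,x_i)^2$ yields $\|f(x_i)\|_2 = d(x_0,x_i)$, and substituting the off-diagonal formula for $M_{ij}$ back into the polarization identity gives $\|f(x_i) - f(x_j)\|_2^2 = d(x_i,x_j)^2$.

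The argument is essentially bookkeeping once the Gram-matrix interpretation is in place, so there is no serious obstacle. The only mild subtleties are the factor of $2$ built into the definition of the matrix and the fact that the basepoint $x_0$ does not appear as a row/column; both dissolve as soon as one makes the translation $f(x_0) = 0$ explicit and writes the matrix as $2 F^T F$.
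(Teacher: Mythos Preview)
Your argument is correct and is the standard proof of Schoenberg's theorem. Note, however, that the paper does not supply its own proof of this statement: it is quoted as a classical result with a citation to Schoenberg (1935), and is only \emph{used} later (e.g., in the proof of Theorem~\ref{Teuclidean}, where the paper remarks that the modified Cayley--Menger matrix is precisely the Gram matrix $(2\langle y_i - y_0, y_{i'} - y_0\rangle)$, exactly the identification you make). So there is nothing to compare against; your write-up would serve perfectly well as the missing justification.
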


As an aside, the determinant of this matrix is related to the volume of a
polytope with vertices $x_i$ (beginning with classical work of
Cayley~\cite{Cayley}), and the Cayley--Menger matrix itself connects to
the principle of trilateration/triangulation that underlies the GPS
system.

Returning to the present work, our goal is to study the distance matrix
of a finite metric space vis-a-vis its blowups. We begin with a
``negative'' result from metric geometry. Note that every blowup of a
finite metric space embeds into $\R^k$ (for some $k$) equipped with the
supnorm, by Fr\'echet's aforementioned result. In contrast, we employ
Schoenberg's theorem~\ref{Tschoenberg} to show that the same is far from
true when considering the Euclidean metric. Namely, given a finite metric
space $X$, we characterize all blowups $X[\bn]$ that embed in some
Euclidean space $(\R^k, \| \cdot \|_2)$. Since $X$ embeds into $X[\bn]$,
a necessary condition is that $X$ itself should be Euclidean. With this
in mind, we have:

\begin{utheorem}\label{Teuclidean}
Suppose $X = \{ x_1, \dots, x_k \}$ is a finite metric subspace of
Euclidean space $(\R^r, \| \cdot \|_2)$. Given positive integers $\{
n_{x_i} : 1 \leq i \leq k \}$, not all of which equal $1$, the following
are equivalent:
\begin{enumerate}
\item The blowup $X[\bn]$ isometrically embeds into some Euclidean space
$(\R^{r'}, \| \cdot \|_2)$.

\item Either $k=1$ and $\bn$ is arbitrary (then by convention $X[\bn]$ is
a simplex); or $k>1$ and there exists a unique $1 \leq j \leq k$ such
that $n_{x_j} = 2$. In this case, we moreover have:
(a)~$n_{x_i} = 1\ \forall i \neq j$,
(b)~$x_j$ is not in the affine hull/span $V$ of $\{ x_i : i \neq j \}$,
and
(c)~the unique point $v \in V$ closest to $x_j$, lies in $X$.
\end{enumerate}
If these conditions hold, one can take $r' = r$ and $X[\bn] = X \sqcup \{
2v - x_j \}$.
\end{utheorem}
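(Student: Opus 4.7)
The overall strategy is to translate Euclidean embeddability into a geometric constraint on pairs of copies of each blown-up point, via a midpoint argument combined with Theorem~\ref{Tschoenberg}. For the direction $(2)\Rightarrow(1)$: in the case $k=1$ the blowup is a regular simplex by convention; for $k>1$ I would set $y := 2v - x_j \in \R^r$ and verify that $X\cup\{y\}\subset\R^r$ is isometric to $X[\bn]$. Conditions (b)--(c) force $v$ to be the orthogonal projection of $x_j$ onto $V$, hence $\langle v-x_j,\,v-x_i\rangle = 0$ for every $i\ne j$. Expanding $\|y-x_i\|^2 = \|2(v-x_j)+(x_j-x_i)\|^2$ then collapses to $\|x_j-x_i\|^2$, while $\|y-x_j\| = 2\|v-x_j\| = 2d(x_j, X\setminus\{x_j\})$ because $v$ realizes the minimum.

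The substantive direction is $(1)\Rightarrow(2)$. Assume $X[\bn]$ embeds isometrically in $(\R^{r'},\|\cdot\|_2)$, and let $j$ be any index with $n_{x_j}\ge 2$. Pick two distinct Euclidean copies $y_1,y_2$ of $x_j$, so $\|y_1-y_2\| = 2\delta_j$ with $\delta_j := d(x_j, X\setminus\{x_j\})$. For any $v\in X\setminus\{x_j\}$ achieving $d(x_j,v)=\delta_j$, one has $\|y_1-v\|=\|y_2-v\|=\delta_j$, which saturates the triangle inequality and forces $v = \tfrac12(y_1+y_2)$ in $\R^{r'}$. Three consequences follow at once: the nearest neighbor of $x_j$ in $X$ is unique (every minimizer must equal this fixed midpoint); $n_{x_j}\le 2$ (three pairwise $2\delta_j$-apart copies cannot all share a single midpoint); and, setting $y_1 = x_j$, $y_2 = 2v - x_j$ and expanding $\|y_2-w\|^2 = \|x_j-w\|^2$ for $w\in X\setminus\{x_j\}$ yields $\langle v-x_j,\,v-w\rangle = 0$. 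Thus $v-x_j$ is orthogonal to the affine hull $V$ of $\{x_i : i\ne j\}$, which proves (b) and identifies $v$ as the orthogonal projection, so (c) also holds.

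The delicate remaining step, which I expect to be the principal obstacle, is uniqueness of the index $j$ with $n_{x_j}=2$. To rule out two such indices $j_1 \ne j_2$, the plan is to compare the Euclidean distance between the ``new'' copies $2v_{j_1}-x_{j_1}$ and $2v_{j_2}-x_{j_2}$ with the required value $\|x_{j_1}-x_{j_2}\|$, to combine the two orthogonality relations from the previous paragraph with the nearest-neighbor inequalities $d(x_{j_\ell}, v_{j_\ell}) \le d(x_{j_1}, x_{j_2})$ for $\ell = 1,2$, and to separately treat the degenerate sub-cases $v_{j_1}=x_{j_2}$ and $v_{j_2}=x_{j_1}$. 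The projection/orthogonality conditions alone are mutually consistent across the two indices, so the contradiction must be squeezed out of the \emph{minimality} of $v_{j_\ell}$ within $X$ itself, not just from its being a foot of perpendicular. Once the uniqueness of $j$ is in hand, condition (a) is automatic, and the closing assertion ($r'=r$ and $X[\bn]=X\sqcup\{2v-x_j\}$) is just the explicit construction used in the first paragraph.
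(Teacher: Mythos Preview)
Your treatment of $(2)\Rightarrow(1)$, of the bound $n_{x_j}\le 2$ via the midpoint argument, and of conditions (b)--(c) via orthogonality is correct and in places cleaner than the paper's (which instead computes Cayley--Menger determinants for $n_{x_j}\le 2$ and uses a Gram-matrix uniqueness argument for (b)). The gap is exactly where you flagged it: uniqueness of $j$. Your plan to compare $\|x'_{j_1}-x'_{j_2}\|$ with $\|x_{j_1}-x_{j_2}\|$ cannot produce a contradiction in the generic sub-case. Once you know $x'_{j_\ell}=2v_{j_\ell}-x_{j_\ell}$ and $u_\ell:=v_{j_\ell}-x_{j_\ell}\perp v_{j_\ell}-w$ for every $w\in X[\bn]\setminus\{x_{j_\ell},x'_{j_\ell}\}$ (note: this includes $w=x'_{j_{3-\ell}}$, not only original points), you deduce $u_1\perp u_2$ and $u_1,u_2\perp v_{j_1}-v_{j_2}$; expanding then gives $\|x'_{j_1}-x'_{j_2}\|=\|x_{j_1}-x_{j_2}\|$ automatically. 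The minimality of $v_{j_\ell}$ does not rescue this: take $X=\{(0,-1,0),\,(2,0,-1),\,(0,0,0),\,(2,0,0)\}\subset\R^3$, so $x_3,x_4$ are the unique nearest neighbours of $x_1,x_2$ respectively, and check directly that $X[(2,2,1,1)]$ embeds in $\R^3$ via $x'_1=(0,1,0)$, $x'_2=(2,0,1)$ --- all fifteen pairwise distances match.

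The paper attacks this step differently: it asserts that the four-point blowup $Y[(2,2)]$ of $Y=\{x_{j_1},x_{j_2}\}$ sits inside $X[\bn]$ and computes its Cayley--Menger determinant to be negative. But $Y[(2,2)]$ uses the distance $2d(x_{j_\ell},Y\setminus\{x_{j_\ell}\})=2\|x_{j_1}-x_{j_2}\|$ between copies, whereas the subspace of $X[\bn]$ uses $2d(x_{j_\ell},X\setminus\{x_{j_\ell}\})$; these agree only when $x_{j_1},x_{j_2}$ are each other's nearest neighbours in $X$, which fails in the example above. So your instinct that this is the principal obstacle is right, and the paper's argument at this step has the same gap; the statement as written does not appear to hold.
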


Given the preceding result, we turn away from metric geometry, and
instead focus on studying the family of blowups $X[\bn]$ -- through their
distance matrices $D_{X[\bn]}$ (which contain all of the information on
$X[\bn]$). Drawing inspiration from Graham and Pollak
\cite{Graham-Pollak}, we focus on one of the simplest invariants of this
family of matrices: their determinants, and the (possibly algebraic)
nature of the dependence of $\det D_{X[\bn]}$ on $\bn$. In this paper, we
show that the function $: \bn \mapsto \det D_{X[\bn]}$ possesses several
attractive properties. First, $\det D_{X[\bn]}$ is a polynomial function
in the \textit{sizes} $n_x$ of the blowup, up to an exponential factor:

\begin{utheorem}\label{Tmetricmatrix}
Given $(X,d)$ a finite metric space, and a tuple of positive integers
$\bn := (n_x)_{x \in X} \in \mathbb{Z}_{>0}^X$, the function $\bn \mapsto
\det D_{X[\bn]}$ is a multi-affine polynomial $p_X(\bn)$ in the $n_x$
(i.e., its monomials are squarefree in the $n_x$), times the exponential
function
\[
\prod_{x \in X} (-2 \; d(x, X \setminus \{ x \}))^{n_x-1}.
\]

Moreover, the polynomial $p_X(\bn)$ has constant term
$p_X({\bf 0}) = \prod_{x \in X} (-2 \; d(x, X \setminus \{ x \}))$, and
linear term $-p_X({\bf 0}) \sum_{x \in X} n_x$.
\end{utheorem}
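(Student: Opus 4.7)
The plan is to view the distance matrix $D_{X[\bn]}$ in block form (blocks indexed by points of $X$), reduce it to block upper-triangular form by explicit row and column operations, and read off the multi-affine factor from the top-left block. Label the copies of each $x \in X$ by $1^{(x)}, \ldots, n_x^{(x)}$ and set $\alpha_x := 2\, d(x, X \setminus \{x\})$. The diagonal $x$-block of $D_{X[\bn]}$ is then $\alpha_x(J_{n_x} - I_{n_x})$ and the off-diagonal $(x,y)$-block is $d(x,y)\, J_{n_x \times n_y}$. First I would, for every $x$ and every $j \geq 2$, subtract row $1^{(x)}$ from row $j^{(x)}$; this zeroes every entry of row $j^{(x)}$ outside the $x$-block and leaves inside the $x$-block only $\alpha_x$ in column $1^{(x)}$ and $-\alpha_x$ on the diagonal. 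Next, for every $y$ I add columns $2^{(y)}, \ldots, n_y^{(y)}$ to column $1^{(y)}$; this annihilates the $\alpha_x$ just produced in column $1^{(x)}$ of row $j^{(x)}$ (it cancels the $-\alpha_x$ in column $j^{(x)}$). Reordering rows and columns so the $1^{(x)}$-indices come first, the matrix becomes
\[
\begin{pmatrix} A & B \\ 0 & C \end{pmatrix},
\]
with $C = \bigoplus_{x \in X} (-\alpha_x) I_{n_x - 1}$ diagonal, and $A$ a $k \times k$ matrix whose entries are $A_{xy} = n_y\, d(x,y)$ for $x \neq y$ and $A_{xx} = (n_x - 1)\alpha_x$. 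Since row/column operations preserve determinants, $\det D_{X[\bn]} = \det(A) \cdot \prod_{x \in X}(-\alpha_x)^{n_x - 1}$, so setting $p_X(\bn) := \det(A)$ already accounts for the exponential factor.

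The key algebraic observation is that $A = \D_X \cdot \diag(n_x)_{x \in X} - \diag(\alpha_x)_{x \in X}$, since by Definition~\ref{Ddef} the diagonal of $\D_X$ is exactly $(\alpha_x)_{x \in X}$. Expanding $\det(A)$ as a sum over permutations $\sigma \in S_X$, each summand $\operatorname{sgn}(\sigma) \prod_x A_{x,\sigma(x)}$ is a product of factors, each of which is linear in the single variable $n_{\sigma(x)}$ (or $n_x$ at a fixed point). Because $\sigma$ is a bijection, distinct $x$'s contribute factors in distinct variables, so every summand is a squarefree monomial in the $n_y$, and $p_X(\bn)$ is multi-affine.

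For the constant term, $p_X(\mathbf{0}) = \det(-\diag(\alpha_x)) = \prod_{x \in X}(-\alpha_x)$. For the linear term, by multi-affinity it suffices to find the coefficient of each $n_y$ by specializing $n_{x'} = 0$ for $x' \neq y$. In this specialization, every column of $A$ other than the $y$-th has all off-diagonal entries killed (they carry a factor $n_{x'}$), leaving only the diagonal entry $-\alpha_{x'}$, while the $y$-th column has diagonal entry $(n_y - 1)\alpha_y$. Expanding along the pure-diagonal columns gives
\[
p_X(\bn)\Big|_{n_{x'} = 0,\, x' \neq y} = (n_y - 1)\alpha_y \prod_{x' \neq y}(-\alpha_{x'}) = -(n_y - 1)\, p_X(\mathbf{0}),
\]
so the coefficient of $n_y$ is $-p_X(\mathbf{0})$; summing over $y$ yields the stated linear term. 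The main obstacle is the bookkeeping in the first paragraph: once the row and column operations are executed correctly and the block upper-triangular form is in hand, the remaining steps are clean determinantal manipulations.
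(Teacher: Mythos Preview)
Your argument is correct. The block row/column reduction is carried out cleanly, the resulting matrix $A$ coincides (up to transpose) with the paper's $N(\ba_X,\D_X)$ in~\eqref{Emetricpoly}, and the Leibniz-expansion argument for multi-affinity is sound since each factor $A_{x,\sigma(x)}$ depends affinely on the single variable $n_{\sigma(x)}$. The constant and linear terms are extracted exactly as the paper does.

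Where you differ from the paper is in the route to the factorization $\det D_{X[\bn]} = \det(A)\cdot\prod_x(-\alpha_x)^{n_x-1}$. The paper does not perform explicit row/column operations on $D_{X[\bn]}$; instead it proves a strictly more general determinantal identity (Theorem~\ref{Tmonoid}) for block matrices $M(\ba,D)$ whose $(i,j)$ block is $\delta_{ij}a_i\Id_{n_i}+d_{ij}\bp_i\bq_j^T$, over an arbitrary commutative ring. That proof works over the rational function field in all the entries, uses a rank-one decomposition of $D$ together with two Schur-complement expansions, and then invokes Zariski density to remove the genericity assumptions. Your approach is considerably more elementary and self-contained for the specific statement at hand: no Schur complements, no field-of-fractions detour, no density argument. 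What the paper's approach buys is the surrounding structure in Theorem~\ref{Tmonoid}---the monoid morphism, the explicit inverse of $M(\ba,D)$, and the generality in $\bp_i,\bq_i$---none of which is needed for Theorem~\ref{Tmetricmatrix} itself but is used elsewhere (e.g.\ in Lemma~\ref{Lwellbehaved}). For proving just this theorem, your direct reduction is arguably the cleaner path.
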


Theorem~\ref{Tmetricmatrix} follows from a stronger one proved below. See
Theorem~\ref{Tmonoid}, which shows in particular that not only do the
conclusions of Theorem~\ref{Tmetricmatrix} hold over an arbitrary
commutative ring, but moreover, the blowup-polynomial $p_X(\bn)$ is a
polynomial function in the variables $\bn = \{ n_x : x \in X \}$ as well
as the entries of the ``original'' distance matrix $D_X$ -- and moreover,
it is squarefree/multi-affine in all of these arguments (where we treat
all entries of $D_X$ to be ``independent'' variables).

We also refine the final assertions of Theorem~\ref{Tmetricmatrix}, by
isolating in Proposition~\ref{Pcoeff} the coefficient of \textit{every}
monomial in $p_X(\bn)$. That proposition moreover provides a sufficient
condition under which the coefficients of two monomials in $p_X(\bn)$ are
equal.

Theorem~\ref{Tmetricmatrix} leads us to introduce the following notion,
for an arbitrary finite metric space (e.g., every finite, connected,
$\mathbb{R}_{>0}$-weighted graph).

\begin{definition}\label{Dblowup}
Define the \emph{(multivariate) blowup-polynomial} of a finite metric
space $(X,d)$ to be $p_X(\bn)$, where the $n_x$ are thought of as
indeterminates. We write out a closed-form expression in the proof of
Theorem~\ref{Tmetricmatrix} -- see Equation~\eqref{Emetricpoly}.

In this paper, we also study a specialization of this polynomial. Define
the \emph{univariate blowup-polynomial} of $(X,d)$ to be $u_X(n) :=
p_X(n,n,\dots,n)$, where $n$ is thought of as an indeterminate.
\end{definition}

\begin{remark}\label{Rzariski}
Definition~\ref{Dblowup} requires a small clarification.
The polynomial map (by Theorem~\ref{Tmetricmatrix}) \[ \bn \mapsto \det
D_{X[\bn]} \cdot \prod_{x \in X} (-2 d(x, X \setminus \{ x \}))^{1 -
n_x}, \qquad \bn \in \mathbb{Z}_{>0}^k \] can be extended from the
Zariski dense subset $\mathbb{Z}_{>0}^k$ to all of $\R^k$. (Zariski
density is explained during the proof of Theorem~\ref{Tmetricmatrix}
below.) Since $\R$ is an infinite field, this polynomial map on $\R^k$
may now be identified with a polynomial, which is precisely $p_X(-)$, a
polynomial in $|X|$ variables (which we will denote by $\{ n_x : x \in X
\}$ throughout the paper, via a mild abuse of notation). Now setting all
arguments to be the same indeterminate yields the univariate
blowup-polynomial of $(X,d)$.
\end{remark}

\subsection{Real-stability}

We next discuss the blowup-polynomial $p_X(\cdot)$ and its univariate
specialization $u_X(\cdot)$ from the viewpoint of root-location
properties. As we will see, the polynomial $u_X(n) = p_X(n,n,\dots,n)$
always turns out to be real-rooted in $n$. In fact, even more is true.
Recall that in recent times, the notion of real-rootedness has been
studied in a much more powerful avatar: real-stability. Our next result
strengthens the real-rootedness of $u_X(\cdot)$ to the second attractive
property of $p_X(\cdot)$ -- namely, real-stability:

\begin{utheorem}\label{Tstable}
The blowup-polynomial $p_X(\bn)$ of every finite metric space $(X,d)$ is
real-stable in $\{ n_x \}$. (Hence its univariate specialization $u_X(n)
= p_X(n,n,\dots,n)$ is always real-rooted.)
\end{utheorem}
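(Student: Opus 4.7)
The plan is to derive an explicit (non-blowup) determinantal formula for $p_X(\bn)$ and then verify real-stability via the classical ``$A + iB$ invertible when $B \succ 0$'' criterion. Assume $|X| \geq 2$ throughout (the $|X|=1$ case being degenerate) and set $\delta_x := 2\, d(x, X \setminus \{x\}) > 0$, so that $\D_X = D_X + \diag(\delta_x)$ in the notation of Definition~\ref{Ddef}. I would first put $D_{X[\bn]}$ in block form: ordering copies of each point consecutively, the $(x,y)$-block equals $d(x,y)\, J_{n_x \times n_y}$ for $x \neq y$ and $\delta_x(J_{n_x} - I_{n_x})$ for $x = y$. Letting $E(\bn)$ be the $(\sum_x n_x) \times |X|$ matrix whose $x$-th column has $\mathbf{1}_{n_x}$ on the $x$-block (and zero elsewhere), and $\Delta(\bn) := \bigoplus_{x \in X} \delta_x I_{n_x}$, this rearranges as $D_{X[\bn]} = E(\bn)\,\D_X\,E(\bn)^T - \Delta(\bn)$. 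The Weinstein--Aronszajn identity $\det(I - UV) = \det(I - VU)$ together with the straightforward computation $E(\bn)^T \Delta(\bn)^{-1} E(\bn) = \diag(n_x/\delta_x)$ yields
\[
\det D_{X[\bn]} \;=\; \prod_{x \in X} (-\delta_x)^{n_x} \cdot \det\!\Bigl( I_{|X|} - \D_X\, \diag(n_x/\delta_x) \Bigr).
\]
Dividing by the exponential factor from Theorem~\ref{Tmetricmatrix} and rearranging gives, as an identity of polynomials in $\bn$,
\[
p_X(\bn) \;=\; \det\!\bigl( \D_X\, \diag(\bn) - \diag(\delta_x) \bigr).
\]

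Next I would verify real-stability directly from this formula. Fix $\bn \in \mathbb{C}^X$ with $\mathrm{Im}(n_x) > 0$ for each $x$; then $\prod_x n_x \neq 0$, and factoring $\diag(\bn)$ on the right reduces the task to showing that $M(\bn) := \D_X - \diag(\delta_x/n_x)$ is invertible. Write $M(\bn) = A(\bn) + i\,B(\bn)$ with $A(\bn), B(\bn)$ real symmetric; since $\mathrm{Im}(1/n_x) = -\mathrm{Im}(n_x)/|n_x|^2$ and each $\delta_x > 0$,
\[
B(\bn) \;=\; \diag\!\bigl( \delta_x\, \mathrm{Im}(n_x)\,/\,|n_x|^2 \bigr) \;\succ\; 0.
\]
If $M(\bn)v = 0$ for some $v \in \mathbb{C}^{|X|}$, then $0 = v^* M(\bn) v = v^* A(\bn) v + i\, v^* B(\bn) v$ forces $v^* B(\bn) v = 0$, and hence $v = 0$. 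Thus $p_X(\bn) \neq 0$, which is the definition of real-stability.

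Real-rootedness of $u_X(n) = p_X(n,\dots,n)$ follows at once: the leading-term formula in Theorem~\ref{Tmetricmatrix} shows $u_X \not\equiv 0$, while for any $n \in \mathbb{C}$ with $\mathrm{Im}(n) > 0$ the tuple $(n,\dots,n)$ has all coordinates in the upper half-plane, so $u_X(n) \neq 0$; the analogous statement for $\mathrm{Im}(n) < 0$ follows by complex conjugation, since $u_X$ has real coefficients. The only non-routine step in the argument is obtaining the closed form for $p_X$ displayed above; once that is in hand, the stability check reduces to the standard ``$A + iB$ invertible when $B$ is positive definite'' observation.
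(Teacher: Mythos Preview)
Your proof is correct. The determinantal formula you derive, $p_X(\bn) = \det(\D_X\,\diag(\bn) - \diag(\delta_x))$, is exactly the paper's formula $p_X(\bn) = \det(\Delta_{\ba_X} + \Delta_\bn \D_X)$ (up to a transpose, since $\D_X$ is symmetric and $\ba_X = -(\delta_x)_x$); your route to it via Weinstein--Aronszajn is equivalent to the Schur-complement manipulations in the paper's Theorem~\ref{Tmonoid}.

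Where you diverge is in the stability step. The paper rewrites $p_X({\bf z})$ as $\det(-\Delta_{\ba_X}) \prod_j z_j \cdot \det(\mathcal{C}_X + \sum_j(-z_j^{-1})E_{jj})$, invokes Borcea--Br\"and\'en's determinantal stability criterion to see that $w \mapsto \det(\mathcal{C}_X + \sum_j w_j E_{jj})$ is stable, and then applies the inversion $w_j \mapsto -1/z_j$. You instead factor out $\diag(\bn)$ and argue directly that $\D_X - \diag(\delta_x/n_x) = A + iB$ with $B \succ 0$ is invertible. Your argument is essentially the \emph{proof} of the Borcea--Br\"and\'en lemma inlined, so it is more self-contained and bypasses the inversion trick. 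The paper's detour through the normalized matrix $\mathcal{C}_X$ pays off later (it is reused in the Lorentzian characterization of Theorem~\ref{Tlorentz} and in the appendix on mixed characteristic polynomials), but for Theorem~\ref{Tstable} alone your direct approach is cleaner.
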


Recall that real-stable polynomials are simply ones with real
coefficients, which do not vanish when all arguments are constrained to
lie in the (open) upper half-plane $\Im(z) > 0$. Such polynomials have
been intensively studied in recent years, with a vast number of
applications. For instance, they were famously used in celebrated works
of Borcea--Br\"and\'en (e.g.~\cite{BB1,BB2,BB3}) and
Marcus--Spielman--Srivastava~\cite{MSS1,MSS2} to prove longstanding
conjectures (including of Kadison--Singer, Johnson, Bilu--Linial,
Lubotzky, and others), construct expander graphs, and vastly extend the
Laguerre--P\'olya--Schur program \cite{Laguerre1,Polya1913,Polya-Schur}
from the turn of the 20th century (among other applications).

Theorem~\ref{Tstable} reveals that for all finite metric spaces -- in
particular, for all finite connected graphs -- the blowup-polynomial is
indeed multi-affine and real-stable. The class of multi-affine
real-stable polynomials has been characterized in \cite[Theorem
5.6]{Branden} and \cite[Theorem 3]{WW}. (For a connection to matroids,
see \cite{Branden}, \cite{COSW}.) To the best of our knowledge,
blowup-polynomials $p_X(\bn)$ provide novel examples/realizations of
multi-affine real-stable polynomials.

\subsection{Graph metric spaces: symmetries, complete multipartite
graphs}

We now turn from the metric-geometric Theorem~\ref{Teuclidean}, the
algebraic Theorem~\ref{Tmetricmatrix}, and the analysis-themed
Theorem~\ref{Tstable}, to a more combinatorial theme, by restricting from
metric spaces to graphs. Here we present two ``main theorems'' and one
proposition.

\subsubsection{Graph invariants and symmetries}

Having shown that $\det D_{X[\bn]}$ is a polynomial in $\bn$ (times an
exponential factor), and that $p_X(\cdot)$ is always real-stable, our
next result explains a third attractive property of $p_X(\cdot)$:
\textit{The blowup-polynomial of a graph $X = G$ is indeed a (novel)
graph invariant}. To formally state this result, we begin by re-examining
the blowup-construction for graphs and their distance matrices.

A distinguished sub-class of discrete metric spaces is that of finite
simple connected unweighted graphs $G$ (so, without parallel/multiple
edges or self-loops). Here the distance between two nodes $v,w$ is
defined to be the (edge-)length of any shortest path joining $v,w$. In
this paper, we term such objects \textbf{graph metric spaces}. Note that
the blowup $G[\bn]$ is \textit{a priori} only defined as a metric space;
we now adjust the definition to make it a graph.

\begin{definition}
Given a graph metric space $G = (V,E)$, and a tuple $\bn = (n_v : v \in
V)$, the \emph{$\bn$-blowup} of $G$ is defined to be the graph $G[\bn]$
-- with $n_v$ copies of each vertex $v$ -- such that a copy of $v$ and
one of $w$ are adjacent in $G[\bn]$ if and only if $v \neq w$ are
adjacent in $G$.
\end{definition}

\noindent (For example, the $\bn$-blowup of a complete graph is a
complete multipartite graph.) Now note that if $G$ is a graph metric
space, then so is $G[\bn]$ for all tuples $\bn \in
\mathbb{Z}_{>0}^{|V|}$. The results stated above thus apply to every such
graph $G$ -- more precisely, to the distance matrices of the blowups of
$G$.

To motivate our next result, now specifically for graph metric spaces, we
first relate the symmetries of the graph with those of its
blowup-polynomial $p_G(\bn)$. Suppose a graph metric space $G = (V,E)$
has a structural (i.e., adjacency-preserving) symmetry $\Psi : V \to V$
-- i.e., an \textit{(auto-)isometry} as a metric space. Denoting the
corresponding relabeled graph metric space by $\Psi(G)$,
\begin{equation}\label{Eisom} D_G = D_{\Psi(G)}, \qquad \D_G =
\D_{\Psi(G)}, \qquad p_G(\bn) \equiv p_{\Psi(G)}(\bn). \end{equation}

It is thus natural to ask if the converse holds -- i.e., if $p_G(\cdot)$
helps recover the group of auto-isometries of $G$. A stronger result
would be if $p_G$ recovers $G$ itself (up to isometry). We show that both
of these hold:

\begin{utheorem}\label{Tisom}
Given a graph metric space $G = (V,E)$ and a bijection $\Psi : V \to V$,
the symmetries of the polynomial $p_G$ equal the isometries of $G$. In
particular, any (equivalently all) of the statements in~\eqref{Eisom}
hold, if and only if $\Psi$ is an isometry of $G$. More strongly, the
polynomial $p_G(\bn)$ recovers the graph metric space $G$ (up to
isometry). However, this does not hold for the polynomial $u_G$.
\end{utheorem}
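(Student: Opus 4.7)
My plan is to read the labeled distance matrix $D_G$ directly off the coefficients of $p_G$, which simultaneously handles both the recovery statement and the identification of symmetry groups. I first record a general identity: for any bijection $\Psi : V \to V$, the blowups $\Psi(G)[\bn]$ and $G[\bn \circ \Psi]$ are isometric metric spaces via $(v,i) \mapsto (\Psi^{-1}(v), i)$, and the exponential factor in Theorem~\ref{Tmetricmatrix} is invariant under this relabeling since $d(v, V \setminus v) = 1$ for every vertex of a graph metric space. Consequently
\begin{equation*}
p_{\Psi(G)}(\bn) \;=\; p_G(\bn \circ \Psi),
\end{equation*}
which reduces the condition $p_G \equiv p_{\Psi(G)}$ to ``$\Psi$ is a symmetry of the polynomial $p_G$,'' and shows that if $\Psi$ is an isometry of $G$ then $\Psi(G) = G$ as labeled graphs (edges of $G$ are pairs at $d_G$-distance $1$), so all three conditions in \eqref{Eisom} trivially hold; this disposes of the easy direction.

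The core step is to invoke the closed-form expansion for $p_X(\bn)$ announced in Equation~\eqref{Emetricpoly} and refined in Proposition~\ref{Pcoeff}. Specialized to a graph metric space, this should give the principal-minor expansion
\begin{equation*}
p_G(\bn) \;=\; \sum_{S \subseteq V} \Bigl(\prod_{v \in S} n_v\Bigr) (-2)^{|V|-|S|} \det\bigl(\D_G|_S\bigr),
\end{equation*}
equivalently $p_G(\bn) = \det\bigl(\diag(\bn)\, \D_G - 2 I\bigr)$ by the classical diagonal-perturbation identity for determinants. Since $\D_G = D_G + 2I$ has diagonal entries equal to $2$, the coefficient of $n_v n_w$ for $v \neq w$ comes out to $(-2)^{|V|-2}\bigl(4 - d_G(v,w)^2\bigr)$. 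Extracting these coefficients one by one and using $d_G(v,w) \geq 0$, I recover $d_G(v,w)$ for every pair, hence the full labeled distance matrix $D_G$; and since $G$ is unweighted, the edges are exactly the pairs at distance $1$, so $G$ itself is recovered up to isometry, proving the reconstruction assertion.

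The converse of the symmetry claim follows immediately: if $\Psi \in \mathrm{Sym}(V)$ satisfies $p_G(\bn \circ \Psi) = p_G(\bn)$, then matching the coefficient of $n_v n_w$ on both sides forces $d_G(v,w) = d_G(\Psi^{-1}(v), \Psi^{-1}(w))$ for every pair, so $\Psi$ is an isometry, establishing $\mathrm{Sym}(p_G) = \mathrm{Isom}(G)$ and the equivalence of all items in \eqref{Eisom}. Finally, the failure of $u_G(x) = p_G(x, \dots, x)$ to recover $G$ follows from item~(b) of the abstract, to be proved later in the paper: $u_G$ is an invertible transform of the characteristic polynomial of $D_G$, so any pair of non-isometric distance-cospectral graphs $H \not\cong K$ (see the references in \cite{DL}) produces $u_H = u_K$ even though $H \not\cong K$. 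The only non-trivial input to this whole argument is the principal-minor expansion used in the second paragraph -- this is the main obstacle -- but it is already in hand from Theorem~\ref{Tmetricmatrix} and Proposition~\ref{Pcoeff}; granting it, the rest is coefficient-chasing and positivity.
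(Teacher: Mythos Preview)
Your argument is correct and follows essentially the same route as the paper: both read off $d_G(v,w)$ from the coefficient of $n_v n_w$ via Proposition~\ref{Pcoeff}, obtaining $(-2)^{|V|-2}(4 - d_G(v,w)^2)$, and deduce both the recovery of $G$ and the equality $\mathrm{Sym}(p_G) = \mathrm{Isom}(G)$ from this. The only notable difference is in the final claim about $u_G$: the paper exhibits an explicit pair $H' \not\cong K'$ (blowups of two six-vertex graphs, taken from~\cite{DL}) and verifies $u_{H'} = u_{K'}$ by direct computation, whereas you argue abstractly via Proposition~\ref{Pdistancespectrum} that $u_G$ is determined by the distance spectrum, so any distance-cospectral pair works --- your version is more conceptual and the paper's more concrete, but both are valid and rest on the same literature.
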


As the proof reveals, one in fact needs only the homogeneous quadratic
part of $p_G$, i.e.\ its Hessian matrix $((\partial_{n_v}
\partial_{n_{v'}} p_G)({\bf 0}_V))_{v,v' \in V}$, to recover the graph
and its isometries. Moreover, this associates to every graph a
\textit{partially symmetric polynomial}, whose symmetries are precisely
the graph-isometries.
\medskip

Our next result works more generally in metric spaces $X$, hence is
stated over them. Note that the polynomial $p_X(\bn)$ is ``partially
symmetric'', depending on the symmetries (or isometries) of the distance
matrix (or metric space). Indeed, partial symmetry is as much as one can
hope for, because it turns out that ``full'' symmetry (in all variables
$n_x$) occurs precisely in one situation:

\begin{proposition}\label{Tsymm}
Given a finite metric space $X$, the following are equivalent:
\begin{enumerate}
\item The polynomial $p_X(\bn)$ is symmetric in the variables $\{ n_x, \
x \in X \}$.
\item The metric $d_X$ is a rescaled discrete metric:
$d_X(x,y) = c {\bf 1}_{x \neq y}\ \forall x,y \in X$, for some $c>0$.
\end{enumerate}
\end{proposition}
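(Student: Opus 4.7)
The direction (2)$\Rightarrow$(1) is immediate: if $d_X(x,y) = c \mathbf{1}_{x \neq y}$, then every bijection of $X$ is an isometry, and the blowup-polynomial is invariant under isometries (a routine extension of~\eqref{Eisom} to general metric spaces, clear from Definition~\ref{Ddef}), so $p_X$ is $S_{|X|}$-invariant. For (1)$\Rightarrow$(2), assume $|X| \geq 2$ (the case $|X| = 1$ is trivial). My plan is first to derive a closed form for $p_X(\bn)$ via row/column reduction on $D_{X[\bn]}$: for each $x$, subtract the leading ``copy of $x$'' row from the other $n_x - 1$ copies, and then add the remaining $x$-columns to the leading one. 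This brings the matrix into block-triangular form whose diagonal ``follower $\times$ follower'' block contributes precisely the exponential factor $\prod_x (-2\, d(x, X\setminus\{x\}))^{n_x - 1}$ of Theorem~\ref{Tmetricmatrix}, while the residual ``leader $\times$ leader'' block
\[
L(\bn) := \D_X \diag(\bn) - 2\, \diag(d(x, X\setminus\{x\}))_{x \in X}
\]
satisfies $p_X(\bn) = \det L(\bn)$ as polynomials in $\bn$.

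Next, I would evaluate $p_X$ at the indicator $\chi_T$ of an arbitrary subset $T \subseteq X$. For $x \in T$, the $x$-th column of $L(\chi_T)$ coincides with the $x$-th column of $D_X$; for $x \notin T$ it equals $-2\, d(x, X\setminus\{x\})\, e_x$, a scalar multiple of a standard basis vector. Cofactor-expanding along the sparse columns indexed by $X \setminus T$ yields
\[
p_X(\chi_T) = \det(D_T) \cdot \prod_{x \notin T}\bigl(-2\, d(x, X\setminus\{x\})\bigr),
\]
where $D_T$ is the distance matrix of $T$ viewed as a metric subspace of $X$.

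Finally, symmetry of $p_X$ forces $p_X(\chi_T)$ to depend only on $|T|$. Writing $d_x := d(x, X \setminus \{x\}) > 0$, and applying this for $|T| = 2$ (after cancelling the common nonzero factor $\prod_{z \in X}(-2 d_z)$), one obtains $d(x,y)^2 = C\, d_x d_y$ for some constant $C > 0$ and all $x \neq y$. The inequality $d(x,y) \geq d_x$ (by definition of $d_x$), together with its swap, gives $C d_y \geq d_x$ and $C d_x \geq d_y$, hence $C \geq 1$. Conversely, choosing $x$ minimizing $d_x$ over $X$, we have $d_x = \min_{y \neq x}\sqrt{C d_x d_y} \geq d_x \sqrt{C}$, forcing $C \leq 1$. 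Thus $C = 1$, all $d_x$ coincide with a common value $d^* > 0$, and $d(x, y) = d^*$ for every pair $x \neq y$, proving~(2). The main technical obstacle is the closed-form identification $p_X(\bn) = \det L(\bn)$ together with its specialization to the indicators $\chi_T$; from there the symmetry argument is a short computation.
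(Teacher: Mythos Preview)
Your argument is correct. The identification $p_X(\bn)=\det L(\bn)$ is just the transpose of the paper's formula~\eqref{Emetricpoly}, so you are re-deriving what is already available; but your evaluation $p_X(\chi_T)=\det(D_T)\prod_{x\notin T}(-2d_x)$ and the ensuing min/max argument from $d(x,y)^2=Cd_xd_y$ are both clean and valid.

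The route, however, differs from the paper's. For $(2)\Rightarrow(1)$ the paper computes $p_X$ explicitly (Equation~\eqref{Ecomplete}) rather than invoking isometry-invariance; your argument is shorter, though the explicit formula is reused elsewhere in the paper. For $(1)\Rightarrow(2)$ the paper first reduces to $|X|=3$ (if any three points are equidistant then all are), then for $k=3$ it compares the \emph{coefficients} of $n_{x_1}n_{x_2}$ and $n_{x_2}n_{x_3}$ in $p_X$---these are governed by $2\times2$ minors of $\D_X$ via Proposition~\ref{Pcoeff}---to obtain $d'_{12}d'_{13}=(d'_{23})^2$, and concludes by the ordering $d'_{12}\le d'_{13}\le d'_{23}$. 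You instead stay with general $k$, evaluate $p_X$ at indicator vectors of $2$-sets (which brings in $2\times2$ minors of the \emph{unmodified} $D_X$), and finish with an extremal argument on the values $d_x$. Your approach avoids the reduction step and the explicit $3\times3$ computation, at the cost of the small optimization lemma; the paper's is more concrete but localized to three points. Both ultimately extract the same ``size-two'' information from $p_X$, just packaged differently (coefficients of quadratic monomials versus values at $\{0,1\}$-vectors of weight two).
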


\subsubsection{Complete multipartite graphs: novel characterization via
stability}

The remainder of this section returns back to graphs. We next present an
interesting byproduct of the above results: a novel characterization of
the class of complete multipartite graphs. Begin by observing from the
proof of Theorem~\ref{Tstable} that the polynomials $p_G(\cdot)$ are
stable because of a determinantal representation (followed by inversion).
However, they do not enjoy two related properties:
\begin{enumerate}
\item $p_G(\cdot)$ is not homogeneous.

\item The coefficients of the multi-affine polynomial $p_G(\cdot)$ are
not all of the same sign; in particular, they cannot form a probability
distribution on the subsets of $\{ 1, \dots, k \}$ (corresponding to the
various monomials in $p_G(\cdot)$). In fact, even the constant and linear
terms have opposite signs, by the final assertion in
Theorem~\ref{Tmetricmatrix}.
\end{enumerate}

These two (unavailable) properties of real-stable polynomials are indeed
important and well-studied in the literature. Corresponding to the
preceding numbering: \begin{enumerate} \item Very recently, Br\"and\'en
and Huh~\cite{BH} introduced and studied a distinguished class of
homogeneous real polynomials, which they termed \textit{Lorentzian}
polynomials (defined below). Relatedly, Gurvits \cite{Gurvits} /
Anari--Oveis Gharan--Vinzant \cite{AGV} defined strongly/completely
log-concave polynomials, also defined below. These classes of polynomials
have several interesting properties as well as applications; see
e.g.~\cite{AGV3,AGV,BH,Gurvits}, and related/follow-up works.

\item Recall that strongly Rayleigh measures are probability measures on
the power set of $\{ 1, \dots, k \}$ whose generating (multi-affine)
polynomials are real-stable. These were introduced and studied by Borcea,
Br\"and\'en, and Liggett in the fundamental work~\cite{BBL}. This work
developed the theory of negative association/dependence for such
measures, and enabled the authors to prove several conjectures of
Liggett, Pemantle, and Wagner, among other achievements. \end{enumerate}

Given that $p_G(\cdot)$ is always real-stable, a natural question is if
one can characterize those graphs for which a certain homogenization of
$p_G(\cdot)$ is Lorentzian, or a suitable normalization is strongly
Rayleigh. The standard mathematical way to address obstacle~(1) above is
to ``projectivize'' using a new variable $z_0$, while for obstacle~(2) we
evaluate at $(-z_1, \dots, -z_k)$, where we use $z_j$ instead of
$n_{x_j}$ to denote complex variables. Thus, our next result proceeds via
homogenization at $-z_0$:

\begin{utheorem}\label{Tlorentz}
Say $G = (V,E)$ with $|V|=k$. Define the
{\em homogenized blowup-polynomial}
\begin{equation}
\widetilde{p}_G(z_0, z_1, \dots, z_k) := (-z_0)^k p_G \left(
\frac{z_1}{-z_0}, \dots, \frac{z_k}{-z_0} \right) \in \mathbb{R}[z_0,
z_1, \dots, z_k].
\end{equation}
Then the following are equivalent:
\begin{enumerate}
\item The polynomial $\widetilde{p}_G(z_0, z_1, \dots, z_k)$ is
real-stable.

\item The polynomial $\widetilde{p}_G(\cdot)$ has all coefficients (of
the monomials $z_0^{k - |J|} \prod_{j \in J} z_j$) non-negative.

\item We have $(-1)^k p_G(-1,\dots,-1) > 0$, and the normalized
``reflected'' polynomial
\[
(z_1, \dots, z_k) \quad \mapsto \quad \frac{p_G(-z_1, \dots,
-z_k)}{p_G(-1,\dots,-1)}
\]
is {\em strongly Rayleigh}. In other words, this (multi-affine)
polynomial is real-stable and has non-negative coefficients (of all
monomials $\prod_{j \in J} z_j$), which sum up to $1$.

\item The modified distance matrix $\D_G$ (see Definition~\ref{Ddef}) is
positive semidefinite.

\item $G$ is a complete multipartite graph.
\end{enumerate}
\end{utheorem}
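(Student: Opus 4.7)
The plan is to pivot the five-way equivalence through condition~(4), the positive semidefiniteness of $\D_G$. The crucial first step is to derive a clean determinantal representation for $\widetilde{p}_G$. Denoting by $\Phi$ the $N \times k$ block-diagonal matrix whose $v$-th column is the indicator of the $n_v$ copies of vertex $v$ (so $N = \sum_v n_v$ and $\Phi^T \Phi = \diag(\bn)$), a direct block-matrix computation gives
\begin{equation*}
D_{G[\bn]} \;=\; \Phi\, \D_G\, \Phi^T - 2\, I_N.
\end{equation*}
Sylvester's identity then yields $\det D_{G[\bn]} = (-2)^N \det(I_k - \tfrac{1}{2}\diag(\bn)\,\D_G)$, which combined with Theorem~\ref{Tmetricmatrix} and the homogenization (extended from positive integer $\bn$ by Zariski density, cf.\ Remark~\ref{Rzariski}) yields
\begin{equation*}
\widetilde{p}_G(z_0, z_1, \ldots, z_k) \;=\; \det\!\bigl( 2 z_0\, I_k + \diag(z_1, \ldots, z_k)\, \D_G \bigr).
\end{equation*}
Multilinear expansion then reads off the coefficient of $z_0^{k-|S|}\prod_{i \in S} z_i$ as $2^{k - |S|} \det \D_G[S, S]$, the scaled principal minor on the index set $S$.

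From this single identity, conditions (1), (2), and (3) all reduce to (4). (2)$\Leftrightarrow$(4) is the classical principal-minor characterization of PSD real symmetric matrices. For (4)$\Rightarrow$(1), I factor $\D_G = L L^T$ with $L$ of size $k \times r$ and apply Sylvester again to obtain $\widetilde{p}_G = (2 z_0)^{k-r}\det\!\bigl(2 z_0 I_r + \sum_{i=1}^k z_i\, \ell_i \ell_i^T\bigr)$ with $\ell_i$ the $i$-th row of $L$; real-stability then follows from the Borcea--Br\"and\'en determinantal criterion applied to a linear combination of PSD matrices. For (1)$\Rightarrow$(4), I specialize $z_1 = \cdots = z_k = t$ (which preserves real-stability) and obtain $\prod_l (2 z_0 + t \lambda_l)$ over the eigenvalues $\lambda_l$ of $\D_G$; since a real bivariate linear polynomial $a z_0 + b t$ is real-stable only when $ab \geq 0$, each $\lambda_l \geq 0$. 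For (3)$\Leftrightarrow$(4), direct computation from the same identity gives $(-1)^k p_G(-1, \ldots, -1) = 2^k \det\!\bigl(I + \tfrac{1}{2}\D_G\bigr)$, which is positive iff $\D_G \succeq 0$; the normalized coefficient of $\prod_{i \in S} z_i$ in $p_G(-z_1, \ldots, -z_k) / p_G(-1, \ldots, -1)$ works out to $\det\D_G[S,S] / \bigl(2^{|S|}\det(I + \tfrac{1}{2}\D_G)\bigr)$, non-negative for every $S$ exactly when $\D_G$ is PSD, and summing to $1$ at $(1, \ldots, 1)$ by construction; real-stability of the normalized polynomial is inherited from Theorem~\ref{Tstable} under $z_i \mapsto -z_i$, which preserves real-stability for polynomials with real coefficients.

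The genuinely graph-theoretic piece is (4)$\Leftrightarrow$(5). For (5)$\Rightarrow$(4) I observe that in the complete multipartite graph $\D_G = J_k + B$, where $B$ is the block-diagonal all-ones matrix over the parts --- both summands manifestly PSD. For (4)$\Rightarrow$(5), a $2 \times 2$ principal minor on any pair at distance $d$ has determinant $4 - d^2$, so PSD forces the diameter of $G$ to be at most $2$; and any triple $u, v, w$ with $u \sim w$ in $G$ but $u \not\sim v \not\sim w$ yields the $3 \times 3$ principal minor with determinant $\det\!\bigl(\begin{smallmatrix} 2 & 2 & 1 \\ 2 & 2 & 2 \\ 1 & 2 & 2\end{smallmatrix}\bigr) = -2$, contradicting PSD. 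Hence non-adjacency is transitive on $V(G)$ and so is an equivalence relation, whose classes are the parts of a complete multipartite structure. The main obstacle in this plan is the first step --- carefully setting up the block factorization $D_{G[\bn]} = \Phi\, \D_G\, \Phi^T - 2 I_N$ and tracking constants so that the homogenization cleanly produces the linear-pencil matrix $2 z_0 I_k + \diag(z_1, \ldots, z_k)\, \D_G$; once that determinantal identity is in hand, each of the remaining implications reduces to a short standard application.
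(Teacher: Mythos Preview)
Your proof is correct and takes a more direct and elementary route than the paper's in several places. Both approaches establish the determinantal identity $\widetilde{p}_G(z_0,z_1,\dots,z_k)=\det(2z_0 I_k + \diag(z_1,\dots,z_k)\D_G)$ and read off the coefficients as scaled principal minors of $\D_G$, so (2)$\Leftrightarrow$(4) and (4)$\Rightarrow$(1) run parallel. The differences: the paper proves (1)$\Rightarrow$(2) by invoking the Br\"and\'en--Huh Lorentzian machinery, whereas you prove (1)$\Rightarrow$(4) directly by the diagonal specialization $z_1=\cdots=z_k=t$, factoring $\det(2z_0 I_k + t\D_G)=\prod_l(2z_0+t\lambda_l)$, and using that each linear factor of a real-stable polynomial must itself be stable. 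The paper outsources (4)$\Rightarrow$(5) to \cite{LHWS}, while your $2\times 2$ and $3\times 3$ minor argument (diameter $\leq 2$, then transitivity of non-adjacency) is self-contained and short. Your (5)$\Rightarrow$(4) via $\D_G=J_k+B$ is also crisper than the paper's route through blowup-compatibility of PSD. What the paper's approach buys is the explicit link to Lorentzian and strongly/completely log-concave polynomials (Corollary~\ref{Clorentz}); what yours buys is independence from that theory and from the external reference \cite{LHWS}.

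One small correction: the sentence ``\dots $=2^k\det(I+\tfrac12\D_G)$, which is positive iff $\D_G\succeq 0$'' is false as a biconditional---positivity of a single determinant does not force PSD. Fortunately your (3)$\Leftrightarrow$(4) argument does not actually use that direction: in (3)$\Rightarrow$(4) the positivity of $(-1)^k p_G(-1,\dots,-1)$ is a \emph{hypothesis}, and PSD then follows from the non-negativity of all normalized coefficients (equivalently, all principal minors of $\D_G$). Just drop the erroneous ``iff'' and state only the implication $\D_G\succeq 0 \Rightarrow \det(I+\tfrac12\D_G)>0$, which is what (4)$\Rightarrow$(3) needs.
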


Theorem~\ref{Tlorentz} is a novel characterization result of complete
multipartite graphs in the literature, in terms of real stability and the
strong(ly) Rayleigh property.
Moreover, given the remarks preceding Theorem~\ref{Tlorentz}, we present
three further equivalences to these characterizations:

\begin{corollary}\label{Clorentz}
Definitions in Section~\ref{Slorentz}. The assertions in
Theorem~\ref{Tlorentz} are further equivalent to:
\begin{enumerate}
\setcounter{enumi}{5}
\item The polynomial $\widetilde{p}_G(z_0, \dots, z_k)$ is Lorentzian.

\item The polynomial $\widetilde{p}_G(z_0, \dots, z_k)$ is strongly
log-concave.

\item The polynomial $\widetilde{p}_G(z_0, \dots, z_k)$ is completely
log-concave.
\end{enumerate} 
\end{corollary}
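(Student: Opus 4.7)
The plan is to combine Theorem~\ref{Tlorentz} with the general theory of Lorentzian and strongly/completely log-concave polynomials developed by Br\"and\'en--Huh~\cite{BH}, Gurvits~\cite{Gurvits}, and Anari--Oveis Gharan--Vinzant~\cite{AGV}. More precisely, it suffices to prove the implications (1)$\Rightarrow$(6), (6)$\Rightarrow$(2), and the cycle (6)$\Leftrightarrow$(7)$\Leftrightarrow$(8); chaining these with the equivalences already proved in Theorem~\ref{Tlorentz} then closes every required gap.

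The first crucial observation is that $\widetilde{p}_G(z_0,z_1,\dots,z_k)$ is homogeneous of degree $k$. This is immediate from the definition: since $p_G(\bn)$ is multi-affine in $k$ variables (by Theorem~\ref{Tmetricmatrix}), every monomial of $p_G$ has degree at most $k$, and the factor $(-z_0)^k$ together with the substitution $n_{x_j} \mapsto z_j/(-z_0)$ lifts every such monomial to a homogeneous one of degree exactly $k$. This homogeneity is the structural hypothesis that the three notions Lorentzian, strongly log-concave, and completely log-concave are designed around, and is precisely why the homogenization $\widetilde{p}_G$ was introduced in place of $p_G$ itself.

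For (1)$\Rightarrow$(6): Theorem~\ref{Tlorentz} already gives (1)$\Rightarrow$(2), so $\widetilde{p}_G$ is a homogeneous real-stable polynomial with nonnegative coefficients; a theorem of Br\"and\'en--Huh~\cite{BH} then asserts that every such polynomial is Lorentzian. For (6)$\Rightarrow$(2): Lorentzian polynomials are required by definition to have nonnegative coefficients, so (2) follows immediately. For the equivalences (6)$\Leftrightarrow$(7)$\Leftrightarrow$(8): a central theorem of Br\"and\'en--Huh~\cite{BH} establishes that among homogeneous polynomials with nonnegative coefficients, being Lorentzian, strongly log-concave (in the sense of Gurvits~\cite{Gurvits}), and completely log-concave (in the sense of~\cite{AGV}) all coincide.

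The main obstacle to writing this out carefully is the bookkeeping across the several definitions of strongly and completely log-concave used in~\cite{AGV,BH,Gurvits}, together with confirming that the cited structural equivalences apply verbatim to our setting --- namely, polynomials that are multi-affine in $z_1,\dots,z_k$ and may carry a large support of zero coefficients once the $z_0$-degrees are included. Once the definitional conventions are aligned in Section~\ref{Slorentz}, the corollary follows by directly concatenating the cited results with Theorem~\ref{Tlorentz}, with no further graph-theoretic input needed.
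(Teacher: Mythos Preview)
Your proposal is correct and follows essentially the same route as the paper: use homogeneity plus the Br\"and\'en--Huh result that homogeneous real-stable polynomials are Lorentzian to get (1)$\Rightarrow$(6), note that Lorentzian polynomials have nonnegative coefficients by definition for (6)$\Rightarrow$(2), and invoke \cite[Theorem~2.30]{BH} for the equivalence (6)$\Leftrightarrow$(7)$\Leftrightarrow$(8). The only cosmetic difference is that the paper goes directly from (1) to (6) via \cite{BH,COSW} without the detour through (2), since nonnegativity of coefficients is already a consequence of homogeneous real-stability; your version is slightly more roundabout but equally valid.
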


We quickly explain the corollary. Theorem~\ref{Tlorentz}(1) implies
$\widetilde{p}_G$ is Lorentzian (see \cite{BH,COSW}), which implies
Theorem~\ref{Tlorentz}(2). The other equivalences follow from
\cite[Theorem 2.30]{BH}, which shows that -- for any real homogeneous
polynomial -- assertions~(7), (8) here are equivalent to
$\widetilde{p}_G$ being Lorentzian.

\begin{remark}
As we see in the proof of Theorem~\ref{Tlorentz}, when $\D_G$ is positive
semidefinite, the homogeneous polynomial $\widetilde{p}_G(z_0, \dots,
z_k)$ has a determinantal representation, i.e.,
\[
\widetilde{p}_G(z_0, \dots, z_k) = c \cdot \det( z_0 \Id_k + \sum_{j=1}^k
z_j A_j),
\]
with all $A_j$ positive semidefinite and $c \in \R$. In
Proposition~\ref{Pmixed} below, we further compute the \textit{mixed
characteristic polynomial} of these matrices $A_j$
(see~\eqref{Emixeddefn} for the definition), and show that up to a
scalar, it equals the ``inversion'' of the univariate blowup-polynomial,
i.e.\ $z_0^k u_G(z_0^{-1})$.
\end{remark}

\begin{remark}
We also show that the univariate polynomial $u_G(x)$ is intimately
related to the characteristic polynomial of $D_G$ (i.e, the ``distance
spectrum'' of $G$), whose study was one of our original motivations. See
Proposition~\ref{Pdistancespectrum} and the subsequent discussion, for
precise statements.
\end{remark}

\subsection{Two novel delta-matroids}

We conclude with a related byproduct: two novel constructions of
delta-matroids, one for every finite metric space and the other for each
tree graph. Recall that a \textit{delta-matroid} consists of a finite
``ground set'' $E$ and a nonempty collection of \textit{feasible} subsets
$\mathcal{F} \subset 2^E$, satisfying $\bigcup_{F \in \mathcal{F}} F = E$
as well as the symmetric exchange axiom: \textit{Given $A,B \in
\mathcal{F}$ and $x \in A \Delta B$ (their symmetric difference), there
exists $y \in A \Delta B$ such that $A \Delta \{ x, y \} \in
\mathcal{F}$.} Delta-matroids were introduced by Bouchet
in~\cite{Bouchet1} as a generalization of the notion of matroids.

Each (skew-)symmetric matrix $A_{k \times k}$ over a field yields a
\textit{linear delta-matroid} $\mathcal{M}_A$ as follows. Given any
matrix $A_{k \times k}$, let $E := \{ 1, \dots, k \}$ and let a subset $F
\subset E$ belong to $\mathcal{M}_A$ if either $F$ is empty or the
principal submatrix $A_{F \times F}$ is nonsingular. In~\cite{Bouchet2},
Bouchet showed that if $A$ is (skew-)symmetric, then the set system
$\mathcal{M}_A$ is indeed a delta-matroid, which is said to be
\textit{linear}.

We now return to the blowup-polynomial. First recall a 2007 result of
Br\"and\'en \cite{Branden}: given a multi-affine real-stable polynomial,
the set of monomials with nonzero coefficients forms a delta-matroid.
Thus, from $p_X(\bn)$ we obtain a delta-matroid, which as we will explain
is linear:

\begin{corollary}\label{Cdeltamatroid}
Given a finite metric space $(X,d)$, the set of monomials with nonzero
coefficients in $p_X(\bn)$ forms the linear delta-matroid
$\mathcal{M}_{\D_X}$.
\end{corollary}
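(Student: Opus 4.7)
The plan is to read off the monomial-support of $p_X(\bn)$ directly from its closed-form expression (Equation~\eqref{Emetricpoly}), and then match it term-for-term against the defining condition for $\mathcal{M}_{\D_X}$.

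First I would recall the block factorization of the blowup's distance matrix used in the proof of Theorem~\ref{Tmetricmatrix}. Writing $r_x := 2\,d(x, X \setminus \{x\})$, letting $V$ be the $(\sum_x n_x) \times |X|$ ``block-indicator'' matrix whose $x$-th column has $1$'s exactly on the $x$-block, and setting $R := \diag(r_x I_{n_x})$, one has $D_{X[\bn]} = V \D_X V^T - R$. Since every $r_x > 0$, Sylvester's determinant identity collapses this to a $|X| \times |X|$ computation,
\[
\det D_{X[\bn]} \;=\; \det(-R)\cdot \det\bigl(I_{|X|} - \diag(n_x/r_x)\,\D_X\bigr),
\]
and expanding the right-hand determinant by multi-linearity in rows indexed by $F \subseteq X$ yields, after dividing out the exponential factor $\prod_x (-r_x)^{n_x-1}$ identified in Theorem~\ref{Tmetricmatrix},
\[
p_X(\bn) \;=\; (-1)^{|X|}\sum_{F \subseteq X}\Bigl(\prod_{x \in F}(-n_x)\Bigr)\Bigl(\prod_{y \notin F} r_y\Bigr)\det\D_X[F,F].
\]

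The key observation is then that because $p_X(\bn)$ is multi-affine (Theorem~\ref{Tmetricmatrix}), the coefficient of the monomial $\prod_{x \in S} n_x$ in this sum arises only from $F = S$ and equals $(-1)^{|X|+|S|}\bigl(\prod_{y \notin S} r_y\bigr)\cdot\det\D_X[S,S]$. Since every $r_y > 0$, this coefficient is nonzero if and only if $\det\D_X[S,S] \neq 0$, with the empty principal minor conventionally set to $1$ (compatible with $p_X({\bf 0}) \neq 0$). Thus the set of $S \subseteq X$ indexing nonzero monomials of $p_X$ coincides with $\mathcal{M}_{\D_X}$, and the positivity of the diagonal entries $r_x$ of $\D_X$ also guarantees that every singleton $\{x\}$ is feasible, so $\bigcup_{F\in\mathcal{M}_{\D_X}} F = X$. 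That this set system really is a delta-matroid is Bouchet's theorem~\cite{Bouchet2}, and it is independently guaranteed by Br\"and\'en's theorem applied to the real-stable, multi-affine polynomial $p_X$ (Theorem~\ref{Tstable}).

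I do not anticipate a substantive obstacle: the corollary is essentially an unpacking of the formula derived while proving Theorem~\ref{Tmetricmatrix}. The only place that requires real care is bookkeeping the signs when combining $\det(-R)$, the exponential factor $\prod_x(-r_x)^{n_x-1}$, and the $(-n_x/r_x)$'s coming from the row-by-row expansion, and verifying that they cooperate to leave behind the clean prefactor $(-1)^{|X|+|S|}\prod_{y \notin S} r_y$ in front of each principal minor $\det \D_X[S,S]$.
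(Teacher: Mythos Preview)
Your proposal is correct and follows essentially the same route as the paper: you compute the coefficient of $\prod_{x\in S} n_x$ in $p_X(\bn)$ and show it equals a nonzero scalar times $\det(\D_X)_{S\times S}$, which is exactly the content of Proposition~\ref{Pcoeff}(1), the sole ingredient the paper invokes. The only cosmetic difference is that you re-derive the closed form for $p_X$ via Sylvester's identity on $D_{X[\bn]}=V\D_X V^T-R$ rather than quoting the formula $p_X(\bn)=\det(\Delta_{\ba_X}+\Delta_\bn\D_X)$ from~\eqref{Emetricpoly}, but the subsequent row-by-row expansion and conclusion are identical.
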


\begin{definition}
We term $\mathcal{M}_{\D_X}$ the \emph{blowup delta-matroid} of $(X,d)$. 
\end{definition}

The blowup delta-matroid $\mathcal{M}_{\D_X}$ is -- even for $X$ a finite
connected unweighted graph -- a novel construction that arises out of
metric geometry rather than combinatorics, and one that seems to be
unexplored in the literature (and unknown to experts).
Of course, it is a simple, direct consequence of Br\"and\'en's result in
\cite{Branden}.
However, the next delta-matroid is less direct to show:

\begin{utheorem}\label{Ttree-blowup}
Suppose $T = (V,E)$ is a finite connected unweighted tree with $|V| \geq
2$. Define the set system $\mathcal{M}'(T)$ to comprise all subsets
$I \subset V$, except for the ones that contain two vertices $v_1 \neq
v_2$ in $I$ such that the Steiner tree $T(I)$ has $v_1, v_2$ as leaves
with a common neighbor. Then $\mathcal{M}'(T)$ is a delta-matroid, which
does not equal $\mathcal{M}_{D_T}$ for every path graph $T = P_k$, $k
\geq 9$.
\end{utheorem}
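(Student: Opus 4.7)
The plan is to prove two things separately: (a) $\mathcal{M}'(T)$ is a delta-matroid, and (b) for every $k \geq 9$ there is a subset of $V(P_k)$ witnessing $\mathcal{M}'(P_k) \neq \mathcal{M}_{\D_{P_k}}$ (the blowup delta-matroid of Corollary~\ref{Cdeltamatroid}, to which I read this statement as pointing).

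For (a), I will verify Bouchet's symmetric exchange axiom. First I would reformulate feasibility concretely: a set $I \subseteq V$ with $|I| \geq 2$ fails to lie in $\mathcal{M}'(T)$ iff it contains two vertices $v_1 \neq v_2$ that are leaves of the Steiner tree $T(I)$ and are at distance exactly $2$ in $T$ (in a tree, the intermediate vertex on the joining path is the unique common neighbor). Given feasible $A, B \in \mathcal{M}'(T)$ and $x \in A \Delta B$, the task is to produce $y \in A \Delta B$ with $A \Delta \{x, y\} \in \mathcal{M}'(T)$. The approach is to analyze how the leaf set of the Steiner tree evolves under the single-element toggle $A \mapsto A \Delta \{x\}$: adding $x \notin A$ attaches a new path from $x$ to its closest vertex of $T(A)$, making $x$ a new leaf while turning at most one former leaf into an internal vertex; removing $x \in A$ either leaves $T(A)$ unchanged (if $x$ was internal) or prunes a branch of $T(A)$ back to the nearest branching point, possibly promoting other elements of $A$ to leaves. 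If the toggled set $A \Delta \{x\}$ is already feasible, take $y = x$; otherwise a newly formed distance-$2$ pair of leaves has appeared, and one argues that at least one of its members must lie in $A \Delta B$ (using that $B$ too is feasible), and that this member, chosen as $y$, breaks the bad configuration without introducing a fresh one. The main technical obstacle is exactly this case-by-case verification: locating a viable $y$ and ruling out the creation of a new forbidden pair, for which the tree structure of $T$ is essential.

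For (b), I would take $I = \{1, 2, \dots, 9\} \subseteq V(P_k)$ for every $k \geq 9$. The Steiner tree $T(I)$ is the induced subpath, isomorphic to $P_9$, whose two leaves lying in $I$ are $1$ and $9$ at distance $8 \neq 2$, so $I \in \mathcal{M}'(P_k)$. Since every vertex of $P_k$ has nearest-neighbor distance $1$, the principal submatrix $\D_{P_k}[I \times I]$ equals $D_{P_9} + 2\,\Id_9$. To show this matrix is singular, I exhibit the explicit kernel vector
\[
v \;=\; (1,\; 0,\; -1,\; -1,\; 0,\; 1,\; 1,\; 0,\; -1)^{\!\top},
\]
and verify by direct computation that $D_{P_9}\, v = -2\, v$, giving $I \notin \mathcal{M}_{\D_{P_k}}$. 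The vector $v$ can be motivated via the Graham--Lov\'asz tridiagonal formula for $D_{P_n}^{-1}$: eigenvectors of $D_{P_n}$ at eigenvalue $-2$ satisfy the interior recurrence $v_{i+1} = v_i - v_{i-1}$ (with period-$6$ solutions, characteristic roots $e^{\pm i\pi/3}$), together with two endpoint conditions coming from the special first and last rows; these endpoint conditions first admit a common nontrivial solution at $n = 9$, modulo the earlier resonance at $n = 3$ whose would-be witness $\{1, 2, 3\}$ is already infeasible in $\mathcal{M}'(P_3)$ by the Steiner-leaf criterion --- which is precisely why the threshold is $k = 9$.
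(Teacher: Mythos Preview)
Your overall plan for part~(a) matches the paper's: verify the symmetric exchange axiom by first trying $y=x$, and if $A \Delta \{x\}$ is infeasible, locate a corrective $y \in A \Delta B$ using the newly created bad pair. However, your stated recipe --- ``at least one of its members must lie in $A \Delta B$, and this member, chosen as $y$, breaks the bad configuration'' --- fails in a sub-case you have not accounted for. Suppose $x \in B \setminus A$ and $A \cup \{x\}$ is infeasible because $x$ and some $v \in A$ are leaves of $T(A \cup \{x\})$ sharing a neighbor $a$. If moreover $v \in B$, then $v \notin A \Delta B$, and $y=x$ has already been tried; so \emph{neither} member of the bad pair is an admissible $y$. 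A symmetric obstruction arises when $x \in A \setminus B$ and the resulting bad pair $\{v,w\}$ lies entirely in $A \cap B$. The paper handles these cases by a different choice of $y$: since $B$ is feasible and contains both endpoints of the bad pair (and hence $a$), there must exist $y \in B$ separated from $a$ by one of those endpoints; this $y$ is then shown to lie outside $A$ (respectively outside $A \setminus \{x\}$), hence in $A \Delta B$, and toggling it makes one endpoint of the bad pair internal. The paper also uses a uniqueness claim for the bad pair (derived from its characterization of when a tree is a nontrivial blowup) to rule out creating a fresh forbidden configuration after the toggle. So your sketch has the right architecture but is missing precisely the maneuver that carries the hard sub-cases.

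Your part~(b) is correct and in fact more informative than the paper's. The paper simply asserts $\det \D_{P_9} = 0$ by direct computation and then invokes Proposition~\ref{Pcoeff} to transport this to all $P_k$ with $k \geq 9$, whereas you exhibit the explicit eigenvector $v = (1,0,-1,-1,0,1,1,0,-1)^\top$ of $D_{P_9}$ at eigenvalue $-2$ (which one verifies row by row), and you correctly identify $(\D_{P_k})_{I \times I} = \D_{P_9}$ for $I = \{1,\dots,9\}$ since every vertex of $P_k$ has nearest-neighbor distance~$1$. Your reading of $\mathcal{M}_{D_T}$ as $\mathcal{M}_{\D_T}$ is also the intended one.
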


We further prove, this notion of (in)feasible subsets in
$\mathcal{M}'(T)$ does not generalize to all graphs. Thus,
$\mathcal{M}'(T)$ is a \textit{combinatorial} (not matrix-theoretic)
delta-matroid that is also unstudied in the literature to our knowledge,
and which arises from every tree, but interestingly, not from all
graphs.

As a closing statement here: in addition to further exploring the
real-stable polynomials $p_G(\bn)$, it would be interesting to obtain
connections between these delta-matroids $\mathcal{M}_{\D_G}$ and
$\mathcal{M}'(T)$, and others known in the literature from combinatorics,
polynomial geometry, and algebra.

\subsection*{Organization of the paper}

The remainder of the paper is devoted to proving the above
Theorems~\ref{Teuclidean} through~\ref{Ttree-blowup}; this will require
developing several preliminaries along the way. The paper is clustered by
theme; thus, the next two sections and the final one respectively
involve, primarily:
\begin{itemize}
\item (commutative) algebraic methods -- to prove the polynomiality of
$p_X(\cdot)$ (Theorem \ref{Tmetricmatrix}), and to
characterize those $X$ for which it is a symmetric polynomial
(Proposition \ref{Tsymm});

\item methods from real-stability and analysis -- to show $p_X(\cdot)$ is
real-stable (Theorem~\ref{Tstable});

\item metric geometry -- to characterize for a given Euclidean finite
metric space $X$, all blowups that remain Euclidean
(Theorem~\ref{Teuclidean}), and to write down a related ``tropical''
version of Schoenberg's Euclidean embedding theorem
from~\cite{Schoenberg35}.
\end{itemize}

In the remaining Section~\ref{Sgraphs}, we prove
Theorems~\ref{Tisom}--\ref{Ttree-blowup}. In greater detail: we focus on
the special case of $X = G$ a finite simple connected unweighted graph,
with the minimum edge-distance metric. After equating the isometries of
$G$ with the symmetries of $p_G(\bn)$, and recovering $G$ from
$p_G(\bn)$, we prove the aforementioned characterization of complete
multipartite graphs $G$ in terms of $\widetilde{p}_G$ being real-stable,
or $p_G(-\bn) / p_G(-1, \dots, -1)$ being strongly Rayleigh. Next, we
discuss a family of blowup-polynomials from this viewpoint of ``partial''
symmetry. We also connect $u_G(x)$ to the characteristic polynomial of
$D_G$, hence to the distance spectrum of $G$. Finally, we introduce the
delta-matroid $\mathcal{M}'(T)$ for every tree, and explore its relation
to the blowup delta-matroid $\mathcal{M}_{\D_T}$ (for $T$ a path), as
well as extensions to general graphs. We end with two Appendices that
contain supplementary details and results.

We conclude this section on a philosophical note. Our approach in this
work adheres to the maxim that the multivariate polynomial is a natural,
general, and more powerful object than its univariate specialization.
This is of course famously manifested in the recent explosion of activity
in the geometry of polynomials, via the study of real-stable polynomials
by Borcea--Br\"and\'en and other researchers; but also shows up in
several other settings -- we refer the reader to the survey~\cite{Sokal2}
by Sokal for additional instances. (E.g., a specific occurrence is in the
extreme simplicity of the proof of the multivariate Brown--Colbourn
conjecture~\cite{Royle-Sokal,Sokal1}, as opposed to the involved proof in
the univariate case~\cite{Wagner}.)

\section{Algebraic results: the blowup-polynomial and its full
symmetry}\label{S2}

We begin this section by proving Theorem~\ref{Tmetricmatrix} in ``full''
algebraic (and greater mathematical) generality, over an arbitrary unital
commutative ring $R$. We require the following notation.

\begin{definition}
Fix positive integers $k, n_1, \dots, n_k > 0$, and vectors $\bp_i, \bq_i
\in R^{n_i}$ for all $1 \leq i \leq k$.
\begin{enumerate}
\item For these parameters, define the \emph{blowup-monoid} to be the
collection $\mathcal{M}_\bn(R) := R^k \times R^{k \times k}$. We write a
typical element as a pair $(\ba, D)$, where in coordinates, $\ba =
(a_i)^T$ and $D = (d_{ij})$.

\item Given $(\ba, D) \in \mathcal{M}_\bn(R)$, define $M(\ba,D)$ to be
the square matrix of dimension $n_1 + \cdots + n_k$ with $k^2$ blocks,
whose $(i,j)$-block for $1 \leq i,j \leq k$ is $\delta_{i,j} a_i
\Id_{n_i} + d_{ij} \bp_i \bq_j^T$. Also define $\Delta_\ba \in R^{k
\times k}$ to be the diagonal matrix with $(i,i)$ entry $a_i$, and
\[
N(\ba,D) := \Delta_\ba + \diag(\bq_1^T \bp_1, \dots, \bq_k^T \bp_k) \cdot
D \ \in R^{k \times k}.
\]

\item Given $\ba, \ba' \in R^k$, define $\ba \circ \ba' := (a_1 a'_1,
\dots, a_k a'_k)^T \in R^k$.
\end{enumerate}
\end{definition}

The set $\mathcal{M}_\bn(R)$ is of course a group under addition, but we
are interested in the following non-standard monoid structure on it.

\begin{lemma}\label{Lmonoid}
The set $\mathcal{M}_\bn(R)$ is a monoid under the product
\begin{align*}
(\ba,D) \circ (\ba',D') := &\ (\ba \circ \ba', \Delta_\ba D' + D
\Delta_{\ba'} + D \cdot \diag(\bq_1^T \bp_1, \dots, \bq_k^T \bp_k) \cdot
D')\\
= &\ (\ba \circ \ba', \Delta_\ba D' + D N(\ba',D')),
\end{align*}
and with identity element $((1,\dots,1)^T, 0_{k \times k})$.
\end{lemma}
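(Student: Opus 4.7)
The plan is to verify the three monoid axioms directly from the product rule: closure (immediate, since both components of $(\ba,D)\circ(\ba',D')$ clearly lie in $R^k$ and $R^{k\times k}$ respectively), the identity axiom, and associativity. For the identity, substituting $(\ba',D')=((1,\ldots,1)^T,0_{k\times k})$ into the product formula yields first component $\ba\circ(1,\ldots,1)^T=\ba$ and second component $\Delta_\ba\cdot 0 + D\cdot\Id_k + D\cdot P\cdot 0 = D$, where for brevity $P:=\diag(\bq_1^T\bp_1,\ldots,\bq_k^T\bp_k)$; the left identity is checked symmetrically.

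For associativity I would expand both $\bigl((\ba,D)\circ(\ba',D')\bigr)\circ(\ba'',D'')$ and $(\ba,D)\circ\bigl((\ba',D')\circ(\ba'',D'')\bigr)$ using the product formula together with the elementary identity $\Delta_{\ba\circ\ba'}=\Delta_\ba\Delta_{\ba'}$ (since the map $\bu\mapsto\Delta_\bu$ sends Hadamard product to ordinary diagonal-matrix product). The first components agree trivially, because componentwise multiplication in the commutative ring $R$ is associative. Each second component expands into exactly seven matrix products in $R^{k\times k}$:
\begin{align*}
\text{LHS:}\ & \Delta_\ba\Delta_{\ba'}D'' + \Delta_\ba D'\Delta_{\ba''} + D\Delta_{\ba'}\Delta_{\ba''} + DPD'\Delta_{\ba''} + \Delta_\ba D'PD'' + D\Delta_{\ba'}PD'' + DPD'PD'',\\
\text{RHS:}\ & \Delta_\ba\Delta_{\ba'}D'' + \Delta_\ba D'\Delta_{\ba''} + D\Delta_{\ba'}\Delta_{\ba''} + DP\Delta_{\ba'}D'' + \Delta_\ba D'PD'' + DPD'\Delta_{\ba''} + DPD'PD''.
\end{align*}
Six of the seven pairs match after a reordering; the sole non-obvious match is between $D\Delta_{\ba'}PD''$ on the LHS and $DP\Delta_{\ba'}D''$ on the RHS, and these are equal because $P$ and $\Delta_{\ba'}$ are \emph{both} diagonal matrices in $R^{k\times k}$ and therefore commute. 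This commutation of diagonal matrices is the only conceptual input of the argument.

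I expect the main obstacle to be purely bookkeeping — keeping the seven cross terms straight and pairing them by inspection — rather than anything substantive. As a cleaner organizing principle, one may alternatively verify by a direct block computation that the map $(\ba,D)\mapsto M(\ba,D)$ satisfies $M\bigl((\ba,D)\circ(\ba',D')\bigr)=M(\ba,D)\cdot M(\ba',D')$ in the $(n_1+\cdots+n_k)\times(n_1+\cdots+n_k)$ matrix ring; whenever this map is injective (which holds generically in the parameters $\bp_i,\bq_i$), associativity of $\circ$ is inherited from associativity of matrix multiplication, and the general case follows by specialization from the universal polynomial ring over $\mathbb{Z}$ in which the entries of $\ba,D,\bp_i,\bq_i$ are treated as independent indeterminates.
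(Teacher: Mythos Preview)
Your proof is correct. The paper itself does not give a proof of this lemma---it is stated and immediately used, with the implicit understanding that it is a routine verification. Your direct expansion is exactly that verification, and your identification of the single nontrivial point (that $P$ and $\Delta_{\ba'}$ commute because both are diagonal) is spot on.

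Your alternative organizing principle---pushing associativity through the map $(\ba,D)\mapsto M(\ba,D)$ and inheriting it from matrix multiplication via a universal-coefficients argument---is in fact close in spirit to what the paper does elsewhere: the Appendix verifies by a block computation that $M$ is multiplicative (this is Theorem~\ref{Tmonoid}(1)), and the paper repeatedly uses the ``work over $\mathbb{Z}[\text{indeterminates}]$ then specialize'' trick. So your second route is not foreign to the paper's methodology, but the paper does not actually use it to prove the lemma; it simply takes the lemma for granted. Your direct seven-term expansion is the cleaner self-contained argument here.
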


With this notation in place, we now present the ``general'' formulation
of Theorem~\ref{Tmetricmatrix}:

\begin{theorem}\label{Tmonoid}
Fix integers $k, n_1, \dots, n_k$ and vectors $\bp_i, \bq_i$ as above.
Let $K := n_1 + \cdots + n_k$.
\begin{enumerate}
\item The following map is a morphism of monoids:
\[
\Psi : (\mathcal{M}_\bn(R), \circ) \to (R^{K \times K}, 
\cdot), \qquad (\ba,D) \mapsto M(\ba,D).
\]

\item The determinant of $M(\ba,D)$ equals $\prod_i a_i^{n_i - 1}$ times
a multi-affine polynomial in $a_i, d_{ij}$, and the entries $\bq_i^T
\bp_i$. More precisely,
\begin{equation}\label{Emonoid}
\det M(\ba,D) = \det N(\ba,D) \prod_{i=1}^k a_i^{n_i - 1}.
\end{equation}

\item If all $a_i \in R^\times$ and $N(\ba,D)$ is invertible, then so is
$M(\ba,D)$, and
\[
M(\ba,D)^{-1} = M((a_1^{-1}, \dots, a_k^{-1})^T, -\Delta_\ba^{-1} D
N(\ba,D)^{-1}).
\]
\end{enumerate}
\end{theorem}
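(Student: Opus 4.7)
The plan is to prove the three parts in the order (1), (2), (3), exploiting a single structural observation: letting $A := \diag(a_1 \Id_{n_1}, \dots, a_k \Id_{n_k})$ and assembling the $\bp_i$'s (resp.\ $\bq_i$'s) into a block-diagonal ``rectangular'' matrix $P \in R^{K \times k}$ (resp.\ $Q$) whose $i$th column is supported on the $i$th row-block and equals $\bp_i$ there, one has $M(\ba, D) = A + P D Q^T$. A short direct computation then gives $Q^T P = \diag(\bq_i^T \bp_i)$ and, whenever the $a_i$ are units, $Q^T A^{-1} P = \diag(a_i^{-1} \bq_i^T \bp_i)$. This rank-$k$-perturbation picture organizes all three statements.

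For part (1), I would compute the $(i,j)$-block of $M(\ba, D) \cdot M(\ba', D')$ directly as a sum over $\ell$. The $(i,\ell)$-block of the first factor is $\delta_{i\ell} a_i \Id_{n_i} + d_{i\ell} \bp_i \bq_\ell^T$ and similarly for the second factor; expanding the product yields four kinds of summands. The diagonal-diagonal piece collapses via two Kronecker deltas to $\delta_{i,j} a_i a'_i \Id_{n_i}$; the two ``mixed'' pieces yield $(a_i d'_{ij} + d_{ij} a'_j)\, \bp_i \bq_j^T$; and the outer-outer piece produces $\sum_\ell d_{i\ell} (\bq_\ell^T \bp_\ell) d'_{\ell j}\, \bp_i \bq_j^T$, which is where the scalars $\bq_\ell^T \bp_\ell$ appear. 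Collecting, the coefficient of $\bp_i \bq_j^T$ is exactly the $(i,j)$-entry of $\Delta_\ba D' + D \Delta_{\ba'} + D \diag(\bq_\ell^T \bp_\ell) D'$, matching the monoid product; the identity-element check is immediate.

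For part (2), Sylvester's determinant identity applied to $M = A + PDQ^T$ gives, in the ``generic'' setting where each $a_i$ is a unit,
\[
\det M(\ba, D) = \det A \cdot \det\bigl(\Id_k + D\, Q^T A^{-1} P\bigr) = \prod_i a_i^{n_i} \cdot \det\bigl(\Id_k + D\, \diag(a_i^{-1} \bq_i^T \bp_i)\bigr),
\]
while factoring $\Delta_\ba$ out of $N$ yields $\det N = \prod_i a_i \cdot \det(\Id_k + \diag(a_i^{-1} \bq_i^T \bp_i) D)$; equality with the previous line follows from $\det(\Id + XY) = \det(\Id + YX)$ and gives \eqref{Emonoid}. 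To handle an arbitrary commutative ring $R$, I would first establish \eqref{Emonoid} in the universal polynomial ring $\mathbb{Z}[a_i, d_{ij}, p_{i,\alpha}, q_{i,\alpha}]$ by noting that both sides are polynomials agreeing after localizing at the non-zero-divisor $\prod_i a_i$, and then specialize coefficientwise to $R$.

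For part (3), the monoid morphism of part (1) reduces verifying the claimed formula for $M(\ba, D)^{-1}$ to checking $(\ba, D) \circ (\ba', D') = ((1, \dots, 1)^T, 0_{k \times k})$ for $(\ba', D') = (\ba^{-1}, -\Delta_\ba^{-1} D N^{-1})$. The first component is trivial, while the second reduces to showing $N(\ba', D') = N(\ba, D)^{-1}$, which drops out of the one-line computation (using that diagonal matrices commute)
\[
N(\ba', D') = \Delta_\ba^{-1} - \Delta_\ba^{-1} \diag(\bq_i^T \bp_i) D N^{-1} = \Delta_\ba^{-1} \bigl(N - \diag(\bq_i^T \bp_i) D\bigr) N^{-1} = \Delta_\ba^{-1} \Delta_\ba N^{-1} = N^{-1}.
\]
The main obstacle is really the bookkeeping in part (1): one must keep all four kinds of cross-terms straight and recognize that the ``mixed'' $\bp$--$\bq$ pairings collapse precisely to the scalars $\bq_\ell^T \bp_\ell$ in the definition of $\circ$. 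Once this is in hand, part (2) is the matrix determinant lemma with a mild universal-ring argument, and part (3) is a short verification via the morphism from part (1).
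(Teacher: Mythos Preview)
Your proposal is correct. Parts~(1) and~(3) are essentially the paper's own arguments: the paper also verifies~(1) by a direct block computation (deferred to an appendix) and deduces~(3) from~(1) by checking one side of the inverse identity in $\mathcal{M}_\bn(R)$. A minor difference is that the paper computes $(\ba',D') \circ (\ba,D)$ rather than $(\ba,D) \circ (\ba',D')$; their choice makes the second component collapse to $\Delta_\ba^{-1} D - \Delta_\ba^{-1} D N^{-1} N = 0$ in one line, without the intermediate step of showing $N(\ba',D') = N^{-1}$.

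For part~(2) you take a genuinely different route. The paper writes out a rank-one decomposition $D = \sum_j \bu_j \bv_j^T$, embeds $M(\ba,D)$ as a Schur complement of a larger $(K+r)\times(K+r)$ block matrix, and then takes a second Schur complement the other way to arrive at $\det N(\ba,D)\prod_i a_i^{n_i-1}$; the passage to arbitrary rings is done via Zariski density over $\mathbb{Q}$. Your approach bypasses the rank-one decomposition entirely: the factorization $M(\ba,D) = A + PDQ^T$ with $Q^TP = \diag(\bq_i^T\bp_i)$ lets Sylvester's identity do the work in a single step, and your localization argument in $\mathbb{Z}[a_i,d_{ij},p_{i,\alpha},q_{i,\alpha}]$ is a clean algebraic substitute for Zariski density. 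In fact the paper itself, in a remark following Lemma~\ref{Lwellbehaved}, points out precisely this alternative via the block-diagonal matrices $\mathcal{W}(\bp_1,\dots,\bp_k)$ and $\mathcal{W}(\bq_1,\dots,\bq_k)$ (your $P$ and $Q$). Your version is shorter and avoids introducing the auxiliary rank $r$ and the vectors $\bu_j,\bv_j$; the paper's version has the mild advantage of being more self-contained, since it derives the Sylvester-type identity from scratch via explicit Schur complements rather than invoking it.
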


Instead of using $N(\ba,D)$ which involves ``post-multiplication'' by
$D$, one can also use $N(\ba,D^T)^T$ in the above results, to obtain
similar formulas that we leave to the interested reader.

\begin{proof}
The first assertion is easy, and it implies the third assertion via
showing that $M(\ba,D)^{-1} M(\ba,D) = \Id_K$. (We show these
computations for completeness in an Appendix.) Thus, it remains to prove
the second assertion.
To proceed, we employ \textit{Zariski density}, as was done in
e.g.~our previous work~\cite{CK-tree}. Namely, we begin by working over
the field of rational functions in $k + k^2 + 2K$ variables
\[
\mathbb{F} := \mathbb{Q}(A_1, \dots, A_k, D_{ij}, Q_i^{(l)}, P_i^{(l)}),
\]
where $A_i, D_{ij}$ (with a slight abuse of notation), and $Q_i^{(l)},
P_i^{(l)}$ -- with $1 \leq i,j \leq k$ and $1 \leq l \leq n_i$ -- serve
as proxies for $a_i, d_{ij}$, and the coordinates of $\bq_i, \bp_i$
respectively. Over this field, we work with
\[
{\bf A} = (A_1, \dots, A_k)^T, \qquad
{\bf Q}_i = (Q_i^{(1)}, \dots, Q_i^{(n_i)})^T, \qquad
{\bf P}_i = (P_i^{(1)}, \dots, P_i^{(n_i)})^T,
\]
and the matrix ${\bf D} = (D_{ij})$; note that ${\bf D}$ has full rank
$r=k$, since $\det {\bf D}$ is a nonzero polynomial over $\mathbb{Q}$,
hence is a unit in $\mathbb{F}$.

Let ${\bf D} = \sum_{j=1}^r \bu_j \bv_j^T$ be any rank-one decomposition.
For each $1 \leq j \leq r$, write $\bu_j = (u_{j1}, \dots, u_{jk})^T$,
and similarly for $\bv_j$. Then $D_{ij} = \sum_{s=1}^r u_{si} v_{sj}$ for
all $i,j$. Now a Schur complement argument (with respect to the $(2,2)$
block below) yields:
\[
\det M({\bf A}, {\bf D}) = \det \begin{pmatrix}
A_1 \Id_{n_1} & \cdots & 0 & \vline & u_{11} {\bf P}_1 & \cdots & u_{r1}
{\bf P}_1 \\
\vdots & \ddots & \vdots & \vline & \vdots & \ddots & \vdots\\
0 & \cdots & A_k \Id_{n_k} & \vline & u_{1k} {\bf P}_k & \cdots & u_{rk}
{\bf P}_k\\
\hline
-v_{11} {\bf Q}_1^T & \cdots & -v_{1k} {\bf Q}_k^T & \vline & & & \\
\vdots & \ddots & \vdots & \vline & & \Id_r & \\
-v_{r1} {\bf Q}_1^T & \cdots & -v_{rk} {\bf Q}_k^T & \vline & & &
\end{pmatrix}.
\]

We next compute the determinant on the right alternately: by using the
Schur complement with respect to the $(1,1)$ block instead. This yields:
\[
\det M({\bf A}, {\bf D}) = \det ( \Id_r + M ) \prod_{i=1}^k A_i^{n_i},
\]
where $M_{r \times r}$ has $(i,j)$ entry
$\sum_{l=1}^k v_{il} \; (A_l^{-1} {\bf Q}_l^T {\bf P}_l) \; u_{jl}$.
But $\det (\Id_r + M)$ is also the determinant of
\[
M' :=
\begin{pmatrix}
& & & \vline & (A_1^{-1} {\bf Q}_1^T {\bf P}_1) u_{11} & \cdots &
(A_1^{-1} {\bf Q}_1^T {\bf P}_1) u_{r1} \\
& \Id_k & & \vline & \vdots & \ddots & \vdots\\
& & & \vline & (A_k^{-1} {\bf Q}_k^T {\bf P}_k) u_{1k} & \cdots &
(A_k^{-1} {\bf Q}_k^T {\bf P}_k) u_{rk}\\
\hline
-v_{11} & \cdots & -v_{1k} & \vline & & & \\
\vdots & \ddots & \vdots & \vline & & \Id_r & \\
-v_{r1} & \cdots & -v_{rk} & \vline & & &
\end{pmatrix},
\]
by taking the Schur complement with respect to its $(1,1)$ block.
Finally, take the Schur complement with respect to the $(2,2)$ block of
$M'$, to obtain:
\begin{align*}
\det M({\bf A}, {\bf D}) = &\ \det M' \prod_{i=1}^k A_i^{n_i} = \det
\left(\Id_k + \Delta_{\bf A}^{-1} \diag( {\bf Q}_1^T {\bf P}_1, \dots,
{\bf Q}_k^T {\bf P}_k) {\bf D} \right) \prod_{i=1}^k A_i^{n_i}\\
= &\ \det N({\bf A}, {\bf D}) \prod_{i=1}^k A_i^{n_i - 1},
\end{align*}

\noindent and this is indeed $\prod_i A_i^{n_i - 1}$ times a multi-affine
polynomial in the claimed variables.

The above reasoning proves the assertion~\eqref{Emonoid} over the field
\[
\mathbb{F} = \mathbb{Q}(A_1, \dots, A_k, D_{ij}, Q_i^{(l)}, P_i^{(l)})
\]
defined above. We now explain how Zariski density helps
prove~\eqref{Emonoid} over every unital commutative ring -- with the key
being that both sides of~\eqref{Emonoid} are \textit{polynomials} in the
variables. Begin by observing that~\eqref{Emonoid} actually holds over
the polynomial (sub)ring
\[
R_0 := \mathbb{Q}[A_1, \dots, A_k, D_{ij}, Q_i^{(l)}, P_i^{(l)}],
\]
but the above proof used the invertibility of the polynomials $A_1,
\dots, A_k, \det (D_{ij})_{i,j=1}^k$.

Now use that $\mathbb{Q}$ is an infinite field; thus, the following
result applies:

\begin{proposition}\label{Pzariski}
The following are equivalent for a field $\mathbb{F}$.
\begin{enumerate}
\item The polynomial ring $\mathbb{F}[x_1, \dots, x_n]$ (for some $n \geq
1$) equals the ring of polynomial functions from affine $n$-space
$\mathbb{A}_{\mathbb{F}}^n \cong \mathbb{F}^n$ to $\mathbb{F}$.

\item The preceding statement holds for every $n \geq 1$.

\item $\mathbb{F}$ is infinite.
\end{enumerate}

Moreover, the nonzero-locus $\mathcal{L}$ of any nonzero polynomial in
$\mathbb{F}[x_1, \dots, x_n]$ with $\mathbb{F}$ an infinite field, is
Zariski dense in $\mathbb{A}_{\mathbb{F}}^n$. In other words, if a
polynomial in $n$ variables equals zero on $\mathcal{L}$, then it
vanishes on all of $\mathbb{A}_{\mathbb{F}}^n \cong \mathbb{F}^n$.
\end{proposition}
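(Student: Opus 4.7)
The plan is to prove the cyclic chain $(3) \Rightarrow (2) \Rightarrow (1) \Rightarrow (3)$ and then deduce the Zariski density ``moreover'' as a consequence. The argument is entirely classical, so the main task is writing it cleanly rather than finding a clever idea.

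First I would handle $(2) \Rightarrow (1)$, which is immediate, and $(1) \Rightarrow (3)$ by contrapositive: if $\mathbb{F}$ is finite, then $f(x) := \prod_{a \in \mathbb{F}}(x-a) \in \mathbb{F}[x]$ is a nonzero polynomial that induces the zero function on $\mathbb{F}$, so the evaluation map $\mathbb{F}[x] \to \mathrm{Fun}(\mathbb{F}, \mathbb{F})$ is not injective and condition $(1)$ already fails for $n=1$.

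The substantive step is $(3) \Rightarrow (2)$, which I would prove by induction on $n$. The base case $n=1$ uses the standard fact that a nonzero polynomial in $\mathbb{F}[x]$ of degree $d$ has at most $d$ roots; since $\mathbb{F}$ is infinite, a polynomial inducing the zero function must be zero. For the inductive step, given $f \in \mathbb{F}[x_1, \dots, x_n]$ with $f \equiv 0$ on $\mathbb{F}^n$, I would group by powers of the last variable,
\[
f(x_1, \dots, x_n) = \sum_{i=0}^{d} g_i(x_1, \dots, x_{n-1}) \, x_n^i,
\]
and observe that fixing $(a_1, \dots, a_{n-1}) \in \mathbb{F}^{n-1}$ gives a univariate polynomial in $x_n$ that vanishes on all of $\mathbb{F}$. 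The base case forces each coefficient $g_i(a_1, \dots, a_{n-1})$ to be zero; as this holds at every point of $\mathbb{F}^{n-1}$, the inductive hypothesis forces each $g_i$ to be the zero polynomial, hence $f = 0$.

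Finally, for the ``moreover'' statement, let $f \in \mathbb{F}[x_1, \dots, x_n]$ be nonzero with nonzero-locus $\mathcal{L}$, and suppose some polynomial $g$ vanishes identically on $\mathcal{L}$. Then $fg$ vanishes on all of $\mathbb{F}^n$ (on $\mathcal{L}$ because $g$ does, off $\mathcal{L}$ because $f$ does), so by the already-proved equivalence $fg = 0$ as a polynomial. Since $\mathbb{F}[x_1, \dots, x_n]$ is an integral domain and $f \neq 0$, we conclude $g = 0$, proving Zariski density of $\mathcal{L}$. The only mild subtlety is ensuring the induction is set up correctly in the $(3) \Rightarrow (2)$ step; beyond that there is no real obstacle, as the result is a standard piece of commutative algebra being recorded here for the Zariski-density argument used in the preceding proof.
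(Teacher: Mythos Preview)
Your proof is correct and follows essentially the same route as the paper's own proof-sketch: the paper also notes $(2)\Rightarrow(1)$ is trivial, handles $(1)\Rightarrow(3)$ by exhibiting the nonzero polynomial $x_1^q - x_1$ over a finite field $\mathbb{F}_q$ (which is exactly your $\prod_{a\in\mathbb{F}}(x-a)$), and leaves the remaining induction and the ``moreover'' to the reader. You have supplied precisely the standard details the paper omits, including the clean $fg$ argument for Zariski density.
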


\begin{proof}[Proof-sketch]
Clearly $(2) \implies (1)$; and that the contrapositive of $(1) \implies
(3)$ holds follows from the fact that over a finite field $\mathbb{F}_q$,
the nonzero polynomial $x_1^q - x_1$ equals the zero \textit{function}.
The proof of $(1) \implies (3)$ is by induction on $n \geq 1$, and is
left to the reader (or see e.g.\ standard textbooks, or even
\cite{CK-tree}) -- as is the proof of the final assertion.
\end{proof}

By the equivalence in Proposition~\ref{Pzariski}, the above polynomial
ring $R_0$ equals the ring of polynomial functions in the same number of
variables, so~\eqref{Emonoid} now holds over the ring of polynomial
functions in the above $k + k^2 + 2K$ variables -- but only on the
nonzero-locus of the polynomial $(\det {\bf D}) \prod_i A_i$, since we
used $A_i^{-1}$ and the invertibility of ${\bf D}$ in the above proof.

Now for the final touch: as $(\det {\bf D}) \prod_i A_i$ is a nonzero
polynomial, its nonzero-locus is Zariski dense in affine space
$\mathbb{A}_{\mathbb{Q}}^{k + k^2 + 2K}$ (by Proposition~\ref{Pzariski}).
Since the difference of the polynomials in~\eqref{Emonoid} (this is where
we use that $\det(\cdot)$ is a polynomial!) vanishes on the above
nonzero-locus, it does so for all values of $A_i$ and the other
variables. Therefore~\eqref{Emonoid} holds in the ring $R'_0$ of
polynomial \textit{functions} with coefficients in $\mathbb{Q}$, hence
upon restricting to the polynomial subring of $R'_0$ with integer (not
just rational) coefficients -- since the polynomials on both sides
of~\eqref{Emonoid} have integer coefficients. Finally, the proof is
completed by specializing the variables $A_i$ to specific scalars $a_i$
in an arbitrary unital commutative ring $R$, and similarly for the other
variables.
\end{proof}

Theorem~\ref{Tmonoid}, when specialized to $p_i^{(l)} = q_i^{(l)} = 1$
for all $1 \leq i \leq k$ and $1 \leq l \leq n_i$, reveals how to convert
the sizes $n_{x_i}$ in the blowup-matrix $D_{X[\bn]}$ into entries of the
related matrix $N(\ba,D)$. This helps prove a result in the introduction
-- that $\det D_{X[\bn]}$ is a polynomial in $\bn$:

\begin{proof}[Proof of Theorem~\ref{Tmetricmatrix}]
Everything but the final sentence follows from Theorem~\ref{Tmonoid},
specialized to
\begin{align*}
R = \mathbb{R}, \qquad n_i = n_{x_i}, \qquad
d_{ij} = &\ d(x_i, x_j)\ \forall i \neq j, \qquad
d_{ii} = 2 d(x_i, X \setminus \{ x_i \}) = -a_i, \\
D = \D_X = &\ (d_{ij})_{i,j=1}^k, \qquad
p_i^{(l)} = q_i^{(l)} = 1\ \forall 1 \leq l \leq n_i.
\end{align*}
(A word of caution: $d_{ii} \neq d(x_i, x_i)$, and hence $\D_X \neq D_X$:
they differ by a diagonal matrix.)

In particular, $p_X(\bn)$ is a multi-affine polynomial in $\bq_i^T \bp_i
= n_i$. We also write out the blowup-polynomial, useful here and below:
\begin{align}\label{Emetricpoly}
p_X(\bn) = \det &\ N(\ba_X,\D_X), \quad \text{where} \quad \ba_X =
(D_X - \D_X) (1,1,\dots,1)^T,\\
\text{and so} \quad &\ N(\ba_X,\D_X) = \diag((n_{x_i} - 1) 2 d(x_i, X
\setminus \{ x_i \}))_i + (n_{x_i} d(x_i, x_j))_{i,j=1}^k. \notag
\end{align}
Now the constant term is obtained by evaluating $\det N({\bf a}_X, 0_{k
\times k})$, which is easy since $N({\bf a}_X, 0_{k \times k})$ is
diagonal. Similarly, the coefficient of $n_{x_i}$ is obtained by setting
all other $n_{x_{i'}} = 0$ in $\det N(\ba_X,\D_X)$. Expand along the
$i$th column to compute this determinant; now adding these determinants
over all $i$ yields the claimed formula for the linear term.
\end{proof}

As a further refinement of Theorem~\ref{Tmetricmatrix}, we isolate
\textit{every} term in the multi-affine polynomial $p_X(\bn)$.
Two consequences follow:
(a)~a formula relating the blowup-polynomials for a metric space $X$ and
its subspace $Y$; and
(b)~a sufficient condition for two monomials in $p_X(\bn)$ to have equal
coefficients. In order to state and prove these latter two results, we
require the following notion.

\begin{definition}
We say that a metric subspace $Y$ of a finite metric space $(X,d)$ is
\emph{admissible} if for every $y \in Y$, there exists $y' \in Y$ such
that $d(y, X \setminus \{ y \}) = d(y,y')$.
\end{definition}

For example, in every finite simple connected unweighted graph $G$ with
the minimum edge-distance as its metric, a subset $Y$ of vertices is
admissible if and only if the induced subgraph in $G$ on $Y$ has no
isolated vertices.

\begin{proposition}\label{Pcoeff}
Notation as above.
\begin{enumerate}
\item Given any subset $I \subset \{ 1, \dots, k \}$, the coefficient in
$p_X(\bn)$ of $\prod_{i \in I} n_{x_i}$ is
\[
\det (\D_X)_{I \times I} \prod_{j \not\in I} (-2 d(x_j, X \setminus \{
x_j \})) = \det (\D_X)_{I \times I} \prod_{j \not\in I} (-d_{jj}),
\]
with $(\D_X)_{I \times I}$ the principal submatrix of $\D_X$ formed by
the rows and columns indexed by~$I$.

\item Suppose $I \subset \{ 1, \dots, k \}$, and $Y = \{ x_i : i \in I
\}$ is an admissible subspace of $X$. Then,
\[
p_Y(\{ n_{x_i} : i \in I \}) = p_X(\bn)|_{n_{x_j} = 0\; \forall j \not\in
I} \cdot \prod_{j \not\in I} (-2 d(x_j, X \setminus \{ x_j \}))^{-1}.
\]
In particular, if a particular monomial $\prod_{i \in I_0} n_{x_i}$ does
not occur in $p_Y(\cdot)$ for some $I_0 \subset I$, then it does not
occur in $p_X(\cdot)$ either.

\item Suppose two admissible subspaces of $X$, consisting of points
$(y_1, \dots, y_l)$ and $(z_1, \dots, z_l)$, are isometric (here, $1 \leq
l \leq k$). If moreover
\begin{equation}\label{Ehypothesis}
\prod_{i=1}^l d(y_i, X \setminus \{ y_i \}) =
\prod_{i=1}^l d(z_i, X \setminus \{ z_i \}),
\end{equation}
then the coefficients in $p_X(\bn)$ of $\prod_{i=1}^l
n_{y_i}$ and $\prod_{i=1}^l n_{z_i}$ are equal.
\end{enumerate}
\end{proposition}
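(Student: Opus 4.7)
The plan is to build on the explicit formula \eqref{Emetricpoly}, $p_X(\bn) = \det N(\ba_X,\D_X)$, and note that this matrix can be rewritten as $\Delta_{\bf n}\,\D_X - \Delta_\alpha$, where $\Delta_{\bf n} = \diag(n_{x_i})$ and $\Delta_\alpha = \diag(\alpha_i)$ with $\alpha_i := 2d(x_i,X\setminus\{x_i\})$. Equivalently, the $i$th row of $N(\ba_X,\D_X)$ decomposes as $n_{x_i}\cdot r_i - \alpha_i e_i^T$, where $r_i$ denotes the $i$th row of $\D_X$. Since the matrix is thus a sum of two rank-structured pieces per row, multilinearity of the determinant in rows will give all three parts rather mechanically.

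For part~(1): expand $\det N(\ba_X,\D_X)$ by multilinearity in each row, producing a sum over subsets $S\subset\{1,\dots,k\}$ in which row $i\in S$ contributes $n_{x_i}\cdot r_i$ and row $j\notin S$ contributes $-\alpha_j e_j^T$. The rows $-\alpha_j e_j^T$ have a single nonzero entry in column $j$, so the permutation expansion forces $\sigma(j)=j$ for $j\notin S$; extracting these rows and columns yields the product $\prod_{j\notin S}(-\alpha_j)$ times $\det(\D_X)_{S\times S}$ (after factoring out $\prod_{i\in S} n_{x_i}$). This reads off the coefficient of $\prod_{i\in I} n_{x_i}$.

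For part~(2): the key observation is that $Y$ admissible in $X$ forces $\alpha_j^Y = \alpha_j^X$ for every $j\in I$, since $d(x_j,Y\setminus\{x_j\})\geq d(x_j,X\setminus\{x_j\})$ holds trivially from $Y\subset X$, and admissibility supplies the reverse inequality. Consequently $(\D_Y)_{S\times S} = (\D_X)_{S\times S}$ for each $S\subset I$. Applying part~(1) separately to $p_Y$ and to $p_X(\bn)|_{n_{x_j}=0,\,j\notin I}$ (which, being multi-affine, just kills terms indexed by $S\not\subset I$) shows that the two polynomials agree up to the overall factor $\prod_{j\notin I}(-\alpha_j^X)$, which is the claimed formula; the ``in particular'' statement follows since setting $n_{x_j}=0$ for $j\notin I$ does not affect coefficients of monomials supported in $I$.

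For part~(3): by part~(1), the coefficient of $\prod_i n_{y_i}$ in $p_X$ equals $\det(\D_X)_{Y\times Y}\prod_{w\notin Y}(-\alpha_w^X)$, and similarly for $Z$. An isometry $\phi:Y\to Z$ preserves the off-diagonal distances of $\D_Y,\D_Z$, while admissibility of both subspaces converts the diagonal entries $\alpha_i^X$ into intra-subspace infima (as in part~(2)), which are then also preserved by $\phi$; hence $\det(\D_X)_{Y\times Y}=\det(\D_X)_{Z\times Z}$. The two complementary products $\prod_{w\notin Y}(-\alpha_w^X)$ and $\prod_{w\notin Z}(-\alpha_w^X)$ coincide iff $\prod_{y\in Y}\alpha_y^X=\prod_{z\in Z}\alpha_z^X$, which is exactly hypothesis \eqref{Ehypothesis} after cancelling the common factor of $2^l$. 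I expect the main subtlety to lie precisely here: without admissibility of \emph{both} $Y$ and $Z$, the diagonal entries of the principal submatrices $(\D_X)_{Y\times Y}$ and $(\D_X)_{Z\times Z}$ need not be intrinsic to $Y$ and $Z$, so a bare isometry would not be enough to match them. Admissibility is the precise hypothesis that turns extrinsic $X$-data into intrinsic subspace-data, and it is the step most worth stating carefully.
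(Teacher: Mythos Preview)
Your proof is correct and follows essentially the same approach as the paper. The only cosmetic difference is that for part~(1) you expand $\det N(\ba_X,\D_X)$ directly by multilinearity in the rows, whereas the paper first sets $n_{x_j}=0$ for $j\notin I$ and then expands along the resulting sparse rows before identifying the top coefficient in the remaining variables; these are two phrasings of the same Laplace-type computation, and parts~(2) and~(3) then proceed identically in both arguments (admissibility pins down the diagonal of $(\D_X)_{I\times I}$ as intrinsic to the subspace, and the hypothesis~\eqref{Ehypothesis} matches the complementary products).
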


The final assertion strengthens the (obvious) observation that if $\Psi :
X \to X$ is an isometry, then $p_X(\cdot) \equiv p_{\Psi(X)}(\cdot)$ --
in other words, the polynomial $p_X(\cdot)$ is invariant under the action
of the permutation of the variables $( n_x : x \in X )$ induced by
$\Psi$. This final assertion applies to blowup-polynomials of unweighted
graphs with ``locally homeomorphic neighborhoods'', e.g.~to interior
points and intervals in path graphs (or more generally, banded graphs).
See the opening discussion in Section~\ref{Rgraphsymm}, as well as
Proposition~\ref{Pzeroterms}.

\begin{proof}\hfill
\begin{enumerate}
\item It suffices to compute the coefficient of $\prod_{i \in I} n_{x_i}$
in $p_X(\bn) = \det N(\ba_X,\D_X)$, where $a_i = -2 d(x_i, X \setminus \{
x_i \})\ \forall 1 \leq i \leq k$, and we set all $n_{x_j},\ j \not\in I$
to zero. To evaluate this determinant, notice that for $j \not\in I$, the
$j$th row contains only one nonzero entry, along the main diagonal. Thus,
expand the determinant along the $j$th row for every $j \not\in I$; this
yields $\prod_{j \not\in I} (-d_{jj})$ times the principal minor
$N(\ba_X,\D_X)_{I \times I}$. Moreover, the coefficient of $\prod_{i \in
I} n_{x_i}$ in the expansion of $\det N(\ba_X,\D_X)_{I \times I}$ is the
same as that in expanding $\det N({\bf 0}, \D_X)_{I \times I}$, and this
is precisely $\det (\D_X)_{I \times I}$.

\item Let us use $\ba_X, \D_X$ and $\ba_Y, \D_Y$ for the appropriate data
generated from $X$ and $Y$ respectively. Then the admissibility of $Y$
indicates that $(\ba_X)_I = \ba_Y$ and $(\D_X)_{I \times I} = \D_Y$.
Now a direct computation reveals:
\[
p_X(\bn)|_{n_{x_j} = 0\; \forall j \not\in I} = \det(\Delta_{\ba_Y} +
\Delta_{\bn_Y} \D_Y) \prod_{j \not\in I} (-d_{jj}).
\]
This shows the claimed equation, and the final assertion is an immediate
consequence of it.

\item Let $I', I'' \subset \{ 1, \dots, k \}$ index the points $(y_1,
\dots, y_l)$ and $(z_1, \dots, z_l)$, respectively. Similarly, let $\D_Y,
\D_Z$ denote the respective $l \times l$ matrices (e.g.\ with
off-diagonal entries $d(y_i, y_j)$ and $d(z_i, z_j)$ respectively). The
admissibility of the given subspaces implies that $(\D_X)_{I' \times I'}
= \D_Y$ and $(\D_X)_{I'' \times I''} = \D_Z$. Now use the isometry
between the $y_i$ and $z_i$ (up to relabeling) to deduce that $\det \D_Y
= \det \D_Z$. Via the first part above, it remains to prove that
\[
\prod_{j \not\in I'} (-2 d(x_j, X \setminus \{ x_j \})) =
\prod_{j \not\in I''} (-2 d(x_j, X \setminus \{ x_j \})).
\]
But this indeed holds, since multiplying the left- and right- hand sides
of this equation by the corresponding sides of~\eqref{Ehypothesis} yields
$2^{-l} \prod_{x \in X} (-2d(x, X \setminus \{ x \}))$ on both sides
(once again using admissibility). \qedhere
\end{enumerate}
\end{proof}

We provide some applications of Proposition~\ref{Pcoeff} in later
sections; for now, we apply it to prove that the blowup delta-matroid of
$X$ is linear:

\begin{proof}[Proof of Corollary~\ref{Cdeltamatroid}]
It is immediate from Proposition~\ref{Pcoeff}(1) that the blowup
delta-matroid of $X$ is precisely the linear delta-matroid
$\mathcal{M}_{\D_X}$ (see the paragraph preceding
Corollary~\ref{Cdeltamatroid}).
\end{proof}

We conclude this section by showing another result in the introduction,
which studies when $p_X(\bn)$ is symmetric in the variables $n_x$.

\begin{proof}[Proof of Proposition~\ref{Tsymm}]
First suppose $d_X$ is the discrete metric times a constant $c > 0$. Then
all $a_i = -2c = d_{ii}$. Hence,
\[
\D_X = c {\bf 1}_{k \times k} + c \Id_k \quad \implies \quad
N(\ba_X,\D_X) = -2c \Id_k + \diag(n_{x_1}, \dots, n_{x_k}) \D_X
\]
and this is a rank-one update of the diagonal matrix $\mathbf{\Delta} := c
\diag(n_{x_1}, \dots, n_{x_k}) -2c \Id_k$. Hence
\begin{align}\label{Ecomplete}
p_X(\bn) = \det N(\ba_X,\D_X) = &\ \det (\mathbf{\Delta}) + c \cdot
(1,\dots,1) {\rm adj} (\mathbf{\Delta}) (n_{x_1}, \dots,
n_{x_k})^T\notag\\
= &\ c^k \left( \prod_{i=1}^k (n_{x_i} - 2) + \sum_{i=1}^k n_{x_i}
\prod_{i' \neq i} (n_{x_{i'}} - 2) \right),
\end{align}
and this is indeed symmetric in the $n_{x_i}$.

Conversely, suppose $p_X(\bn)$ is symmetric in $\bn$. If $|X| = k \leq 2$
then the result is immediate. Also note that the assertion~(2) for $k
\geq 3$ follows from that for $k=3$ -- since if the distances between any
$3$ distinct points are equal, then $d(x,y) = d(x,y') = d(x',y')$ for all
distinct $x,y,x',y' \in X$ (verifying the remaining cases is easier).
Thus, we suppose henceforth that $|X| = k = 3$. For ease of exposition,
in this proof we denote $d'_{ij} := d(x_i, x_j)$ for $1 \leq i,j \leq 3$.
Also assume by relabeling the $x_i$ (if needed) that $0 < d'_{12} \leq
d'_{13} \leq d'_{23}$. Then
\[
N(\ba_X,\D_X) = \begin{pmatrix}
(n_{x_1} - 1) 2 d'_{12} & n_{x_1} d'_{12} & n_{x_1} d'_{13} \\
n_{x_2} d'_{12} & (n_{x_2} - 1) 2 d'_{12} & n_{x_2} d'_{23} \\
n_{x_3} d'_{13} & n_{x_3} d'_{23} & (n_{x_3} - 1) 2 d'_{13}
\end{pmatrix}.
\]
Since $p_X(\bn) = \det N(\ba_X,\D_X)$ is symmetric in the $n_{x_i}$, we
equate the coefficients of $n_{x_1} n_{x_2}$ and $n_{x_2} n_{x_3}$, to
obtain:
\[
-2 \cdot d'_{13} \cdot (3 (d'_{12})^2) = -2 \cdot d'_{12} \cdot (4
d'_{12} d'_{13} - (d'_{23})^2).
\]
Simplifying this yields: $d'_{12} d'_{13} = (d'_{23})^2$, and since
$d'_{23}$ dominates $d'_{12}, d'_{13}$, the three \textit{distances}
$d'_{12}, d'_{13}, d'_{23}$ are equal. This proves the converse for $|X|
= k = 3$, hence for all $k \geq 3$.
\end{proof}

\section{Real-stability of the blowup-polynomial}

The proofs in Section~\ref{S2} were mostly algebraic in nature: although
they applied to metric spaces, all but the final proof involved no
inequalities. We now show Theorem~\ref{Tstable}: $p_X(\cdot)$ is always
real-stable.

We begin by mentioning some properties with respect to which blowups
behave well. These include iterated blowups, the blowup-polynomial, and
the modified distance matrix $\D_X$ and its positivity. (As
Theorem~\ref{Teuclidean} indicates, the property of being Euclidean is
not such a property.) We first introduce another ``well-behaved'' matrix
$\mathcal{C}_X$ for a finite metric space, parallel to $\D_X$ and the
vector $\ba_X$, which will be useful here and in later sections.

\begin{definition}
Given a finite metric space $X = \{ x_1, \dots, x_k \}$, recall the
vector ${\bf a}_X \in \R^k$ as in~\eqref{Emetricpoly} and define the
symmetric matrix $\mathcal{C}_X \in \R^{k \times k}$, via:
\begin{align}
\begin{aligned}
{\bf a}_X = &\ -2 (d(x_1, X \setminus \{ x_1 \}), \dots, d(x_k, X
\setminus \{ x_k \})) = (-2 d(x, X \setminus \{ x \}))_{x \in X}, \\
\mathcal{C}_X := &\ (-\Delta_{\ba_X})^{-1/2} \D_X
(-\Delta_{\ba_X})^{-1/2},
\end{aligned}
\end{align}
In other words, $-{\bf a}_X$ is the diagonal vector of the modified
distance matrix $\D_X$, and
\begin{equation}\label{Ecmatrix}
(\mathcal{C}_X)_{ij} = \begin{cases}
1, & \text{if } i = j;\\
\displaystyle \frac{d(x_i, x_j)}{2 \sqrt{d(x_i, X \setminus \{ x_i \})
d(x_j, X \setminus \{ x_j \})}}, \qquad & \text{otherwise}.
\end{cases}
\end{equation}
\end{definition}

\begin{lemma}\label{Lwellbehaved}
Fix a finite metric space $(X,d)$ and an integer tuple $\bn = (n_x : x
\in X) \in \mathbb{Z}_{>0}^X$.
\begin{enumerate}
\item Fix a positive integer $m_{xi}$ for each $x \in X$ and $1 \leq i
\leq n_x$, and let ${\bf m} := (m_{xi})_{x,i}$ denote the entire
collection. Then $(X[\bn])[{\bf m}]$ is isometrically isomorphic to
$X[\bn']$, where $\bn' = (\sum_{i=1}^{n_x} m_{xi} : x \in X)$. Here the
$i$th copy of $x$ in $X[\bn]$ is copied $m_{xi}$ times in $(X[\bn])[{\bf
m}]$.

\item In particular, the blowup-polynomial of an iterated blowup is
simply the original blowup-polynomial in a larger number of variables, up
to a constant:
\begin{equation}\label{Eblowup}
p_{X[\bn]}({\bf m}) \equiv p_X(\bn') \prod_{x \in X} a_x^{n_x - 1},
\end{equation}
where the coordinates of $\bn' = (\sum_{i=1}^{n_x} m_{xi} : x \in X)$ are
sums of variables.

\item Now write $X = \{ x_1, \dots, x_k \}$ as above. Then the matrices
$\D_{X[\bn]}, \mathcal{C}_{X[\bn]}$ are both block $k \times k$ matrices,
with $(i,j)$ block respectively equal to
\[
d_{ij} {\bf 1}_{n_{x_i} \times n_{x_j}} \quad \text{and} \quad
c_{ij} {\bf 1}_{n_{x_i} \times n_{x_j}},
\]
where $\D_X = (d_{ij})_{i,j=1}^k, \mathcal{C}_X = (c_{ij})_{i,j=1}^k$.

\item The following are equivalent:
\begin{enumerate}
\item The matrix $\D_X$ is positive semidefinite.

\item The matrix $\D_{X[\bn]}$ is positive semidefinite for some
(equivalently, every) tuple $\bn$ of positive integers.

\item The matrix $\mathcal{C}_X$ is positive semidefinite.

\item The matrix $\mathcal{C}_{X[\bn]}$ is positive semidefinite for some
(equivalently, every) tuple $\bn$ of positive integers.
\end{enumerate}
\end{enumerate}
\end{lemma}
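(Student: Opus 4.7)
The plan is to establish the four parts in sequence, with (2), (3), (4) each resting on the structural identification in (1). Throughout I write $a_x := -2 d(x, X \setminus \{x\})$, and use $\tilde{x}$ for a copy of $x \in X$ sitting inside $X[\bn]$.

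For part (1), I would exhibit the tautological bijection between $(X[\bn])[{\bf m}]$ and $X[\bn']$ that forgets the intermediate layer, and verify it is an isometry by small case analysis. Distances between copies of two distinct points $x \neq y \in X$ equal $d(x,y)$ on both sides, so these are immediate. The delicate case is for two copies of the same $x \in X$: in $(X[\bn])[{\bf m}]$ they either sit over the same copy $\tilde{x} \in X[\bn]$ (distance $2 d_{X[\bn]}(\tilde{x}, X[\bn] \setminus \{\tilde{x}\})$) or over distinct copies $\tilde{x} \neq \tilde{x}'$ (distance $2 d(x, X \setminus \{x\})$). The key identity is $d_{X[\bn]}(\tilde{x}, X[\bn] \setminus \{\tilde{x}\}) = d(x, X \setminus \{x\})$, because the nearest distinct point to $\tilde{x}$ is a copy of some $y \in X \setminus \{x\}$ realizing $d(x, X \setminus \{x\})$, and this is no larger than $2 d(x, X \setminus \{x\})$, the distance to another copy of $x$ (if one exists). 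Hence both subcases yield $2 d(x, X \setminus \{x\})$, matching the distance between copies of $x$ in $X[\bn']$.

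For part (2), I would apply Theorem~\ref{Tmetricmatrix} to both $\det D_{(X[\bn])[{\bf m}]}$ and $\det D_{X[\bn']}$, which are equal by part (1). The exponential factor for the iterated blowup is $\prod_{x, i} a_{\tilde{x}_i}^{m_{xi}-1}$, which collapses to $\prod_x a_x^{n'_x - n_x}$ by the nearest-distance identity just established. Dividing against $\prod_x a_x^{n'_x - 1}$ from the right yields $p_{X[\bn]}({\bf m}) = p_X(\bn') \prod_x a_x^{n_x - 1}$ for positive-integer ${\bf m}$; since both sides are polynomials in ${\bf m}$, the identity extends as a polynomial identity by Zariski density (cf.~Remark~\ref{Rzariski}). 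For part (3), I would compute directly: the $(i,j)$-block of $D_{X[\bn]}$ is the constant matrix $d(x_i, x_j) {\bf 1}_{n_{x_i} \times n_{x_j}}$ for $i \neq j$, and $2 d(x_i, X \setminus \{x_i\}) ({\bf 1}_{n_{x_i} \times n_{x_i}} - \Id_{n_{x_i}})$ on the diagonal. The diagonal correction in $\D_{X[\bn]}$, again equal to $2 d(x_i, X \setminus \{x_i\}) = d_{ii}$ by part (1), fills in the missing diagonal to give $d_{ii} {\bf 1}_{n_{x_i} \times n_{x_i}}$. The block form of $\mathcal{C}_{X[\bn]}$ follows at once, since the rescaling $(-\Delta_{\ba_{X[\bn]}})^{-1/2}$ is block-diagonal and constant $(-a_{x_i})^{-1/2}$ on the $i$-th block.

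For part (4), the equivalences $(a) \Leftrightarrow (c)$ and $(b) \Leftrightarrow (d)$ are immediate, since $\mathcal{C}_Y = (-\Delta_{\ba_Y})^{-1/2} \D_Y (-\Delta_{\ba_Y})^{-1/2}$ is a symmetric congruence by an invertible real diagonal matrix. For $(a) \Leftrightarrow (b)$, part (3) gives $\D_{X[\bn]} = Z \D_X Z^T$ where $Z$ is the $K \times k$ ``copy incidence'' matrix $Z_{y,i} = {\bf 1}[y \text{ is a copy of } x_i]$, so $\D_X \succeq 0$ implies $\D_{X[\bn]} \succeq 0$; conversely, the diagonal identification from part (3) shows $\D_X$ appears as the principal submatrix of $\D_{X[\bn]}$ obtained by retaining one copy of each $x_i$, so the reverse implication also holds. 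I expect the only nontrivial step to be the careful bookkeeping in part (1), where the distance $2 d(x, X \setminus \{x\})$ for blown-up copies must be reconciled with the smaller ambient distance $d(x, X \setminus \{x\})$ from $\tilde{x}$ to a genuine neighbor; once this identity is in hand, the remaining assertions reduce to routine invocations of Theorem~\ref{Tmetricmatrix} with Zariski density, or to standard block-matrix manipulations.
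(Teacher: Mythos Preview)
Your proofs of parts (1), (3), (4) are correct and essentially identical to the paper's: the same case analysis on distances in (1), the same diagonal-entry identification in (3), and the same principal-submatrix/congruence argument in (4) (your factorization $\D_{X[\bn]} = Z \D_X Z^T$ is exactly the paper's ``compression'' $v \mapsto w$, repackaged).

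Your argument for part (2), however, takes a genuinely different and shorter route. The paper computes $p_{X[\bn]}({\bf m}) = \det(\Delta_{\ba_{X[\bn]}} + \Delta_{\bf m} \D_{X[\bn]})$ directly: it introduces the block incidence matrix $\mathcal{W}$, writes $\D_{X[\bn]} = \mathcal{W} \D_X \mathcal{W}^T$, and then performs two Schur-complement reductions (working over a function field with all entries as indeterminates) to arrive at $p_X(\bn') \prod_x a_x^{n_x-1}$. You instead invoke the isometry from part (1) to equate $\det D_{(X[\bn])[{\bf m}]} = \det D_{X[\bn']}$, apply Theorem~\ref{Tmetricmatrix} separately to each side, and cancel the exponential factors using the nearest-distance identity $a_{\tilde{x}} = a_x$ established in (1). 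This is cleaner and makes the role of part (1) in proving part (2) fully transparent; the paper's route has the advantage of yielding the identity as a polynomial identity in \emph{all} the variables $a_x, d_{ij}, m_{xi}$ simultaneously (not just in ${\bf m}$ for fixed $X$), though that extra generality is not needed for the lemma as stated.
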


\begin{proof}\hfill
\begin{enumerate}
\item In studying $(X[\bn])[{\bf m}]$, for ease of exposition we write
$Y := X[\bn], Z := (X[\bn])[{\bf m}]$.
Also write $y_{xi}$ for the $i$th copy of $x$ in $Y$, and $z_{xij}$ for
the $j$th copy of $y_{xi}$ in $Z$, with $1 \leq i \leq n_x$ and $1 \leq j
\leq m_{xi}$. We now compute $d_Z(z_{xij},z_{x'i'j'})$, considering three
cases. First if $x \neq x'$ then this equals $d_Y(y_{xi},y_{x'i'}) =
d_X(x,x')$. Next if $x = x'$ but $i \neq i'$ then it equals $d_Y(y_{xi},
y_{xi'}) = 2 d(x, X \setminus \{ x \})$. Finally, suppose $x = x'$ and $i
= i'$ but $j \neq j'$. Then
\[
d_Z(z_{xij},z_{x'i'j'}) = 2 d_Y(y_{xi}, Y \setminus \{ y_{xi} \}),
\]
and it is not hard to show, by considering all distances in $Y$, that
this equals $2 d_X(x, X \setminus \{ x \})$. These three cases reveal
that $d_Z(z_{xij}, z_{x'i'j'})$ equals the distance in $X[\bn']$ between
the copies of $x,x' \in X$, and the proof is complete.

\item We show~\eqref{Eblowup} using the previous part and the next part,
and via Zariski density arguments as in the proof of
Theorem~\ref{Tmonoid}. Define $n_j := n_{x_j}$ in this proof for
convenience. Thus, we work more generally in the setting where $X = \{
x_1, \dots, x_k \}$, but the arrays
\[
\ba_X = (a_{x_1}, \dots, a_{x_k})^T, \qquad \D_X =
(d_{rs})_{r,s=1}^k, \qquad {\bf m} = (m_{j1}, \dots, m_{jn_j})_{j=1}^k
\]
consist of \textit{indeterminates}. Let $K := \sum_{j=1}^k n_j$, and
define $\mathcal{W}_{K \times k}$ to be the block matrix
\[
\mathcal{W} := \begin{pmatrix}
{\bf 1}_{n_1 \times 1} & 0_{n_1 \times 1} & \cdots & 0_{n_1 \times 1} \\
0_{n_2 \times 1} & {\bf 1}_{n_2 \times 1} & \cdots & 0_{n_2 \times 1} \\
\vdots & \vdots & \ddots & \vdots \\
0_{n_k \times 1} & 0_{n_k \times 1} & \cdots & {\bf 1}_{n_k \times 1}
\end{pmatrix}.
\]
Now $\Delta_{\ba_{X[\bn]}} = \diag( a_{x_1} \Id_{n_1}, \dots, a_{x_k}
\Id_{n_k})$, and a straightforward computation (using the next part)
shows that $\D_{X[\bn]} = \mathcal{W} \D_X \mathcal{W}^T$.

Notice that if one works over the field
\[
\mathbb{Q}(\{ a_{x_j}, m_{ji} : 1 \leq j \leq k, \ 1 \leq i \leq n_j \},
\{ d_{rs} : 1 \leq r,s \leq k \}),
\]
then the following polynomial is nonzero:
\begin{equation}\label{Ezariski}
(\det \D_X) \prod_{j=1}^k a_{x_j} \prod_{j=1}^k \prod_{i=1}^{n_j} m_{ji}.
\end{equation}

Thus, we now compute:
\[
p_{X[\bn]}({\bf m}) = \det (\Delta_{a_{X[\bn]}} + \Delta_{\bf m}
\D_{X[\bn]}) = \det (\Delta_{a_{X[\bn]}} + \Delta_{\bf m} \mathcal{W}
\D_X \mathcal{W}^T).
\]
Using~\eqref{Ezariski} and Schur complements, this equals
\[
\det (\Delta_{\bf m}) \cdot \det \begin{pmatrix} \Delta_{\bf m}^{-1}
\Delta_{a_{X[\bn]}} & -\mathcal{W} \\ \mathcal{W}^T & \D_X^{-1}
\end{pmatrix} \det (\D_X).
\]

Using an alternate Schur complement, we expand this latter expression as:
\[
\det (\Delta_{\bf m}) \cdot \det (\Delta_{\bf m}^{-1}) \det
(\Delta_{a_{X[\bn]}}) \det( \D_X^{-1} + \mathcal{W}^T \Delta_{\bf m}
\Delta_{a_{X[\bn]}}^{-1} \mathcal{W}) \cdot \det(\D_X).
\]

Now defining $n'_j := \sum_{i=1}^{n_j} m_{ji}$ as in the assertion, we
have:
\[
\mathcal{W}^T \Delta_{\bf m} \Delta_{a_{X[\bn]}}^{-1} \mathcal{W} =
\diag(a_{x_1}^{-1} n'_1, \dots, a_{x_k}^{-1} n'_k) = \Delta_{a_X}^{-1}
\Delta_{\bn'}.
\]
Thus the above computation can be continued:
\begin{align*}
p_{X[\bn]}({\bf m}) = &\ \det(\Delta_{a_{X[\bn]}}) \det(\D_X^{-1} + 
\Delta_{a_X}^{-1} \Delta_{\bn'}) \det(\D_X)\\
= &\ \prod_{j=1}^k a_{x_j}^{n_j} \cdot \det(\Id_k + \Delta_{a_X}^{-1}
\Delta_{\bn'} \D_X)\\
= &\ \prod_{j=1}^k a_{x_j}^{n_j - 1} \cdot \det(\Delta_{a_X} +
\Delta_{\bn'} \D_X) = p_X(\bn') \prod_{j=1}^k a_{x_j}^{n_j - 1}.
\end{align*}

This proves the result over the function field (over $\mathbb{Q}$) in
which the entries $a_{x_j}, m_{ji}, d_{rs}$ are variables. Now we repeat
the Zariski density arguments as in the proof of Theorem~\ref{Tmonoid},
working this time with the nonzero polynomial given in~\eqref{Ezariski}.
This shows the result over an arbitrary commutative ring -- in
particular, over $\R$.

\item The key observation is that the diagonal entries of $\D_{X[\bn]}$
corresponding to the copies of $x \in X$, all equal $2d_X(x, X \setminus
\{ x \})$, which is precisely the corresponding diagonal entry in $\D_X$.
From this, the assertion for $\D_{X[\bn]}$ is immediate, and that for
$\mathcal{C}_{X[\bn]}$ is also straightforward.

\item We first prove the equivalence for the $\D$-matrices. The preceding
part implies that $\D_X$ is a principal submatrix of $\D_{X[\bn]}$, hence
is positive semidefinite if $\D_{X[\bn]}$ is.
Conversely, given $v \in \R^{n_{x_1} + \cdots + n_{x_k}}$, write $v^T =
(v_1^T, \dots, v_k^T)$, with all $v_i \in \R^{n_{x_i}}$. Let
$w_i := v_i^T {\bf 1}_{n_{x_i}}$, and 
denote by
$w := (w_1, \dots, w_k)^T$
the ``compression'' of $v$. Now compute:
\[
v^T \D_{X[\bn]} v = \sum_{i,j=1}^k v_i^T d_{ij} {\bf 1}_{n_{x_i} \times
n_{x_j}} v_j = \sum_{i,j=1}^k w_i d_{ij} w_j = w^T \D_X w,
\]
and this is non-negative (for all $v$) since $\D_X$ is positive
semidefinite. Hence so is $\D_{X[\bn]}$.

This proves the equivalence for the $\D$-matrices. Now for any metric
space $Y$ (e.g.\ $Y = X$ or $X[\bn]$), the matrix $\mathcal{C}_Y =
(-\Delta_{\ba_Y})^{-1/2} \D_Y (-\Delta_{\ba_Y})^{-1/2}$ is positive
semidefinite if and only if $\D_Y$ is. This concludes the proof. \qedhere
\end{enumerate}
\end{proof}

\begin{remark}
The proof of Lemma~\ref{Lwellbehaved}(2) using Zariski density indicates
a similar, alternate approach to proving the formula for $\det M({\bf A},
{\bf D})$ in Theorem~\ref{Tmonoid}. The difference, now, is that the
rank-one expansion of the matrix ${\bf D}$ is no longer needed, and can
be replaced by the use of the two block-diagonal matrices
\[
\mathcal{W}({\bf p}_1, \dots, {\bf p}_k) := \begin{pmatrix} ({\bf
p}_1)_{n_1 \times 1} & 0_{n_1 \times 1} & \cdots & 0_{n_1 \times 1} \\
0_{n_2 \times 1} & ({\bf p}_2)_{n_2 \times 1} & \cdots & 0_{n_2 \times 1}
\\
\vdots & \vdots & \ddots & \vdots \\
0_{n_k \times 1} & 0_{n_k \times 1} & \cdots & ({\bf p}_k)_{n_k \times 1}
\end{pmatrix}
\]
and a similar matrix $\mathcal{W}({\bf q}_1, \dots, {\bf q}_k)$, so that
$M({\bf A}, {\bf D}) = \diag(\{ A_i \cdot \Id_{n_i} \}) + \mathcal{W}(\{
{\bf p}_i \}) \cdot {\bf D} \cdot \mathcal{W}(\{ {\bf q}_i \})^T$.
\end{remark}

Lemma~\ref{Lwellbehaved}(2) immediately implies the following consequence
(which can also be shown directly):

\begin{corollary}\label{Cblowup}
Fix a finite metric space $(X,d)$. For all integer tuples $\bn \in
\mathbb{Z}_{>0}^X$, the blowup-polynomial of $X[\bn]$ has total degree at
most $|X|$.
\end{corollary}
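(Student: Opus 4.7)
The plan is to deduce Corollary~\ref{Cblowup} as an essentially immediate consequence of Lemma~\ref{Lwellbehaved}(2). By that lemma,
\[
p_{X[\bn]}({\bf m}) \;=\; p_X(\bn') \prod_{x \in X} a_x^{n_x-1},
\]
where $a_x = -2\,d(x, X\setminus\{x\})$ is a scalar (independent of ${\bf m}$) and $\bn' = (n'_x)_{x \in X}$ with $n'_x := \sum_{i=1}^{n_x} m_{xi}$. So the constant factor on the right contributes nothing to the total degree in ${\bf m}$, and the question reduces to bounding the total degree of $p_X(\bn')$ as a polynomial in the $m_{xi}$.

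First I would invoke Theorem~\ref{Tmetricmatrix} (equivalently Theorem~\ref{Tmonoid}) to conclude that $p_X$ is multi-affine in its $|X| = k$ arguments, and hence has total degree at most $k$ when viewed as a polynomial in the ``symbolic'' variables $(n'_x)_{x \in X}$. Next I would observe that each $n'_x = \sum_{i=1}^{n_x} m_{xi}$ is a homogeneous linear form in the variables $m_{xi}$. Substituting a homogeneous linear form for each degree-$1$ variable preserves (in fact cannot increase) the total degree of any polynomial; therefore the total degree of $p_X(\bn')$ in the variables ${\bf m}$ is still at most $k = |X|$.

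Combining these two observations, $p_{X[\bn]}({\bf m})$ has total degree at most $|X|$, which is exactly the claim. There is essentially no obstacle here: the content of the corollary sits entirely in Lemma~\ref{Lwellbehaved}(2), which has already done the algebraic work of expressing the blowup-polynomial of an iterated blowup as a substitution into the original blowup-polynomial. The only thing to check carefully is the elementary fact that substituting linear forms into a multi-affine polynomial cannot raise its total degree, and this is clear by expanding out any monomial $\prod_{x \in I} n'_x$ (with $|I| \leq k$) as a sum of products of $|I|$ of the $m_{xi}$'s.
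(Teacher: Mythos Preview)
Your proof is correct and matches the paper's approach exactly: the paper states that this corollary follows immediately from Lemma~\ref{Lwellbehaved}(2), and you have simply spelled out the (easy) reason why substituting linear forms $n'_x = \sum_i m_{xi}$ into the multi-affine polynomial $p_X$ keeps the total degree at most $|X|$.
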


\noindent In other words, no monomials of degree $|X|+1$ or higher occur
in $p_{X[\bn]}$, for any tuple $\bn$.

We now prove the real-stability of $p_X(\cdot)$:

\begin{proof}[Proof of Theorem~\ref{Tstable}]
We continue to use the notation in the proof of
Theorem~\ref{Tmetricmatrix}, with one addition: for expositional clarity,
in this proof we treat $p_X(\cdot)$ as a polynomial in the complex
variables $z_j := n_{x_j}$ for $j=1,\dots,k$. Thus,
\[
p_X(z_1, \dots, z_k) = \det N(\ba_X,\D_X) = \det (\Delta_{\ba_X} +
\Delta_{\bf z} \D_X),
\]
where $a_j = -2 d(x_j, X \setminus \{ x_j \}) < 0\ \forall j$ and
$\Delta_{\bf z} := \diag(z_1, \dots, z_k)$. We compute:
\begin{align*}
p_X({\bf z}) = &\ \det (\Delta_{\bf z}) \det (\D_X - (-\Delta_{\ba_X})
\Delta_{\bf z}^{-1})\\
= &\ \prod_{j=1}^k z_j \cdot \det (-\Delta_{\ba_X})^{1/2} \det \left(
(-\Delta_{\ba_X})^{-1/2} \D_X (-\Delta_{\ba_X})^{-1/2} - \Delta_{\bf
z}^{-1} \right) \det (-\Delta_{\ba_X})^{1/2}\\
= &\ \det (-\Delta_{\ba_X}) \prod_{j=1}^k z_j \cdot \det \left(
\mathcal{C}_X + \sum_{j=1}^k (-z_j^{-1}) E_{jj} \right),
\end{align*}
where $E_{jj}$ is the elementary $k \times k$ matrix with $(j,j)$ entry
$1$ and all other entries zero.

We now appeal to two facts. The first is a well-known result of
Borcea--Br\"and\'en: \cite[Proposition 2.4]{BB1} (see also~\cite[Lemma
4.1]{Branden}), which says that if $A_1, \dots, A_k, B$ are
equi-dimensional real symmetric matrices, with all $A_j$ positive
semidefinite, then the polynomial
\begin{equation}\label{EBH}
f(z_1, \dots, z_k) := \det \left( B + \sum_{j=1}^k z_j A_j \right)
\end{equation}
is either real-stable or identically zero. The second is the folklore
result that ``inversion preserves stability'' (since the upper half-plane
is preserved under the transformation $z \mapsto -1/z$ of
$\mathbb{C}^\times$). That is, if a polynomial $g(z_1, \dots, z_k)$ has
$z_j$-degree $d_j \geq 1$ and is real-stable, then so is the polynomial
\[
z_1^{d_1} \cdot g(-z_1^{-1}, z_2, \dots, z_k)
\]
(this actually holds for any $z_j$). Apply this latter fact to each
variable of the multi-affine polynomial $f(\cdot)$ in~\eqref{EBH} -- in
which $d_j=1$, $B = \mathcal{C}_X$, and $A_j = E_{jj}\ \forall j$. It
follows that the polynomial
\begin{align*}
z_1 \cdots z_k \cdot f(-z_1^{-1}, \dots, -z_k^{-1}) = &\ \prod_{j=1}^k
z_j \cdot \det \left( \mathcal{C}_X + \sum_{j=1}^k (-z_j^{-1}) E_{jj}
\right)\\
= &\ \det (-\Delta_{\ba_X})^{-1} p_X({\bf z})
\end{align*}
is real-stable, and the proof is complete.
\end{proof}

\begin{remark}
For completeness, we briefly touch upon other notions of stability that
are standard in mathematics (analysis, control theory,
differential/difference equations): Hurwitz stability and Schur
stability. Recall that a real polynomial in one variable is said to be
Hurwitz stable (respectively, Schur stable) if all of its roots lie in
the open left half-plane (respectively, in the open unit disk) in
$\mathbb{C}$. Now the univariate specializations $u_X(n) =
p_X(n,n,\dots,n)$ are not all either Hurwitz or Schur stable. As a
concrete example: in the simplest case of the discrete metric on a space
$X$, Equation~\eqref{Ecomplete} implies that $u_X(n) = (n-2)^{k-1} (n-2 +
kn)$, and this vanishes at $n=2, \frac{2}{k+1}$.
\end{remark}

\section{Combinatorics: Graphs and their partially symmetric
blowup-polynomials}\label{Sgraphs}

We now take a closer look at a distinguished sub-class of finite metric
spaces: unweighted graphs.
In this section, we will show Theorems \ref{Tisom}--\ref{Ttree-blowup}.
To avoid having to mention the same quantifiers repeatedly, we introduce
the following definition (used in the opening section).

\begin{definition}
A \emph{graph metric space} is a finite, simple, connected, unweighted
graph $G$, in which the distance between two vertices is the number of
edges in a shortest path connecting them.
\end{definition}

Every graph metric space $G$ is thus a finite metric space, and so the
results in the previous sections apply to it. In particular, to every
graph metric space $G = (V,E)$ are naturally associated a (to our
knowledge) novel graph invariant
\begin{equation}
p_G(\bn) = p_G(\{ n_v : v \in V \}) := \det (2\Delta_\bn - 2 \Id_V +
\Delta_\bn D_G) = \det (-2 \Id_V + \Delta_\bn \D_G)
\end{equation}
(which we showed is real-stable), as well as its univariate
specialization (which is thus real-rooted)
\begin{equation}
u_G(n) = p_G(n,n,\dots,n) = \det ((2n - 2) \Id_V + n D_G) = \det (-2
\Id_V + n \D_G)
\end{equation}
and its ``maximum root'' $\alpha_{\max}(u_G) \in \R$. Here,
$D_G$ is the distance matrix of $G$ (with zeros on the diagonal) and
$\D_G = D_G + 2 \Id_V$ is the modified distance matrix.

\subsection{Connections to the distance spectrum; $p_G$ recovers $G$}

We begin with an observation (for completeness), which ties into one of
our original motivations by connecting the blowup-polynomial $u_G$ to the
distance spectrum of $G$, i.e.\ to the eigenvalues of the distance matrix
$D_G$. The study of these eigenvalues began with the work of Graham and
Lov\'asz~\cite{GL}, and by now, is a well-developed program in the
literature; see e.g.\ \cite{AH}. Our observation here is the following:

\begin{proposition}\label{Pdistancespectrum}
Suppose $G = (V,E)$ is a graph metric space. A real number $n$ is a root
of the univariate blowup-polynomial $u_G$, if and only if $2n^{-1} - 2$
is an eigenvalue of the distance matrix $D_G$, with the same
multiplicity.
\end{proposition}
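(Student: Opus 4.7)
The plan is to reduce the assertion to a direct determinantal identity together with a change-of-variable argument. The starting point is the formula already established in the section,
\[
u_G(n) = \det\bigl(-2\Id_V + n\D_G\bigr) = \det\bigl((2n-2)\Id_V + n D_G\bigr),
\]
which uses $\D_G = D_G + 2\Id_V$. First I would note that $n = 0$ is never a root: substituting gives $u_G(0) = \det(-2\Id_V) = (-2)^{|V|}\neq 0$. So we may assume $n \neq 0$ throughout, and the transformation $n \mapsto 2n^{-1} - 2$ is well-defined.

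For $n \neq 0$ I would factor $n$ out of the matrix $(2n-2)\Id_V + nD_G = n\bigl(D_G - (2n^{-1}-2)\Id_V\bigr)$, yielding
\[
u_G(n) = n^{|V|} \cdot \det\bigl(D_G - (2n^{-1} - 2)\Id_V\bigr) = n^{|V|}\, \chi_{D_G}(2n^{-1}-2),
\]
where $\chi_{D_G}$ is the characteristic polynomial of $D_G$. From this identity it is immediate that $u_G(n_0) = 0$ if and only if $\chi_{D_G}(2n_0^{-1}-2) = 0$, i.e.\ if and only if $2n_0^{-1} - 2$ is an eigenvalue of $D_G$.

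The only subtlety is matching multiplicities, which I would handle as follows. Suppose $\lambda_0 := 2n_0^{-1} - 2$ has multiplicity $m$ as a root of $\chi_{D_G}$, so that $\chi_{D_G}(\lambda) = (\lambda - \lambda_0)^m\, g(\lambda)$ with $g(\lambda_0) \neq 0$. Substituting $\lambda = 2n^{-1}-2$ and using $2n^{-1} - 2 - \lambda_0 = 2(n_0 - n)/(n n_0)$ gives
\[
u_G(n) = n^{|V|}\,\Bigl(\tfrac{2(n_0 - n)}{n n_0}\Bigr)^{\!m}\, g\!\bigl(2n^{-1}-2\bigr) = \tfrac{(-2)^m}{n_0^{m}}\, n^{|V|-m}(n - n_0)^m\, g\!\bigl(2n^{-1}-2\bigr),
\]
and since $n_0 \neq 0$ and $g(\lambda_0) \neq 0$, the factor multiplying $(n-n_0)^m$ is nonzero at $n = n_0$. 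Hence $n_0$ is a root of $u_G$ of multiplicity exactly $m$, and the correspondence is a bijection (with multiplicities) between the nonzero roots of $u_G$ and the eigenvalues of $D_G$ via the involution $n \leftrightarrow 2n^{-1}-2$. There is no real obstacle; the whole argument is essentially a one-line calculation plus a careful bookkeeping of multiplicities.
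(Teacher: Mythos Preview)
Your proof is correct and follows essentially the same approach as the paper: both factor $n$ (or $2n$) out of the determinant $\det((2n-2)\Id_V + nD_G)$ to relate roots of $u_G$ to eigenvalues of $D_G$ via the transformation $n \mapsto 2n^{-1}-2$. You supply more explicit detail on the multiplicity count than the paper, which simply asserts that the result (including multiplicities) follows from the factorization.
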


Alternately, $\lambda \neq -2$ is an eigenvalue of $D_G$ if and only if
$\frac{2}{2 + \lambda}$ is a root of $u_G$.

\begin{proof}
First note from the definitions that $u_G(0) = \det (-2 \Id_V) \neq 0$.
We now compute:
\[
u_G(n) = \det (-2 \Id_V + n (2 \Id_V + D_G)) = (2n)^{|V|} \det( \Id_V +
\textstyle{\frac{1}{2}} D_G - n^{-1} \Id_V).
\]
Thus, $n$ is a (nonzero) root of $u_G$ if and only if $n^{-1}$ is an
eigenvalue of $\Id_V + \frac{1}{2} D_G$. The result follows from here.
\end{proof}

In the distance spectrum literature, much work has gone into studying the
largest eigenvalue of $D_G$, called the ``distance spectral radius'' in
the literature, as well as the smallest eigenvalue of $D_G$. An immediate
application of Proposition~\ref{Pdistancespectrum} provides an
interpretation of another such eigenvalue:

\begin{corollary}\label{Cdistancespectrum}
The smallest eigenvalue of $D_G$ which is strictly greater than $-2$, is
precisely $\frac{2}{\alpha_{\max}(u_G)} - 2$.
\end{corollary}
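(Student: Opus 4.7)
The plan is to reduce the statement to Proposition~\ref{Pdistancespectrum} by carefully tracking the sign of the variable through the inversion map $n \mapsto 2n^{-1} - 2$.

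First I would recall from Proposition~\ref{Pdistancespectrum} that the map $\varphi(n) := 2n^{-1} - 2$ gives a multiplicity-preserving bijection between the nonzero roots of $u_G$ and the eigenvalues of $D_G$ different from $-2$. Next I would note that $\varphi$ is strictly decreasing on each of the two intervals $(-\infty,0)$ and $(0,\infty)$, with $\varphi((0,\infty)) = (-2,\infty)$ and $\varphi((-\infty,0)) = (-\infty,-2)$. Consequently the eigenvalues of $D_G$ that are strictly greater than $-2$ are precisely the values $\varphi(n)$ as $n$ ranges over the \emph{positive} roots of $u_G$, and since $\varphi$ is decreasing on $(0,\infty)$, the smallest such eigenvalue corresponds to the largest positive root of $u_G$.

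It then remains to show that the largest real root $\alpha_{\max}(u_G)$ is in fact positive (so that it is this largest positive root, and not a negative root that happens to be the global maximum). Since $u_G$ is real-rooted by Theorem~\ref{Tstable}, it suffices to exhibit one positive root, equivalently one eigenvalue of $D_G$ strictly greater than $-2$. For this I would observe that $\mathrm{trace}(\D_G) = 2|V| > 0$, so the symmetric matrix $\D_G = D_G + 2\Id_V$ has at least one strictly positive eigenvalue; equivalently, $D_G$ has an eigenvalue strictly greater than $-2$. (Alternatively one may invoke Perron--Frobenius applied to the non-negative matrix $D_G$, whose spectral radius is a positive eigenvalue.) Either way, positive roots of $u_G$ exist, so $\alpha_{\max}(u_G) > 0$.

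Combining these observations, the smallest eigenvalue of $D_G$ that is strictly greater than $-2$ equals $\varphi(\alpha_{\max}(u_G)) = \frac{2}{\alpha_{\max}(u_G)} - 2$, as claimed. There is no real obstacle here: the only point requiring a moment of care is the positivity of $\alpha_{\max}(u_G)$, which is handled by the trace argument above.
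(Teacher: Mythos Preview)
Your argument is correct and follows the same route the paper indicates: the corollary is stated there as an ``immediate application'' of Proposition~\ref{Pdistancespectrum}, and your monotonicity analysis of $\varphi(n)=2n^{-1}-2$ is exactly the unpacking of that word ``immediate''. The one place you add content is the positivity of $\alpha_{\max}(u_G)$: the paper defers this to Proposition~\ref{Pmaxroot}, where it is proved for arbitrary finite metric spaces via the fact that $\mathcal{C}_X$ has at least two positive eigenvalues; your trace argument ($\operatorname{tr}\D_G=2|V|>0$) is a shorter route that works in the graph case and is a perfectly good substitute here.
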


\noindent We refer the reader to further discussions about
$\alpha_{\max}(u_G)$ in and around Proposition~\ref{Pmaxroot}.

Following these observations that reinforce our motivating connections
between distance spectra and the blowup-polynomial, we now move on to the
proof of Theorem~\ref{Tisom}. Recall, this result shows that (the
homogeneous quadratic part of) $p_G$ recovers/detects the graph and its
isometries -- but $u_G$ does not do so.

\begin{proof}[Proof of Theorem~\ref{Tisom}]
We prove the various assertions in serial order.
One implication for the first assertion was described just above the
theorem-statement.
Conversely, suppose $p_G(\bn) \equiv p_{\Psi(G)}(\bn)$. Fix vertices $v
\neq w \in V$, and equate the coefficient of $n_v n_w$ on both sides
using Proposition~\ref{Pcoeff}:
\[
(-2)^{|V|-2} \det \begin{pmatrix} 2 & d(v,w) \\ d(v,w) & 2 \end{pmatrix}
= (-2)^{|V|-2} \det \begin{pmatrix} 2 & d(\Psi(v),\Psi(w)) \\
d(\Psi(v),\Psi(w)) & 2 \end{pmatrix}
\]
since $d_G(v, V \setminus \{ v \}) = 1\ \forall v \in V$. Thus
$d(\Psi(v), \Psi(w)) = d(v,w)$ for all $v, w \in V$, so $\Psi$ is an
isometry.

The second assertion is shown as follows. By Proposition~\ref{Pcoeff}, the
vertex set can be obtained from the nonzero monomials $n_v n_w$ (since
every edge yields a nonzero monomial). In particular, $|V|$ is recovered.
Again by Proposition~\ref{Pcoeff}, there is a bijection between the set
of edges $v \sim w$ in $G$ and the monomials $n_v n_w$ in $p_G(\bn)$ with
coefficient $3(-2)^{|V|-2}$. Thus, all quadratic monomials in $p_G(\bn)$
with this coefficient reveal the edge set of $G$ as well.

Finally, to show that $u_G$ does not detect the graph $G$, consider the
two graphs $H,K$ in Figure~\ref{Fig1}.
\begin{figure}[ht]
\definecolor{xdxdff}{rgb}{0.49,0.49,1}
\begin{tikzpicture}
\draw (0,1.5)-- (-1.5,0);
\draw (0,0)-- (0,1.5);
\draw (-1.5,0)-- (0,0);
\draw (0,0)-- (1.5,0);
\draw (1.5,0)-- (3,0);
\draw (3,0)-- (4.5,0);
\fill [color=xdxdff] (-1.5,0) circle (1.5pt);
\draw[color=black] (-1.5,-0.25) node {$2$};
\fill [color=xdxdff] (0,0) circle (1.5pt);
\draw[color=black] (0,-0.25) node {$3$};
\fill [color=xdxdff] (1.5,0) circle (1.5pt);
\draw[color=black] (1.5,-0.25) node {$4$};
\fill [color=xdxdff] (3,0) circle (1.5pt);
\draw[color=black] (3,-0.25) node {$5$};
\fill [color=xdxdff] (4.5,0) circle (1.5pt);
\draw[color=black] (4.5,-0.25) node {$6$};
\fill [color=xdxdff] (0,1.5) circle (1.5pt);
\draw[color=black] (0,1.75) node {$1$};
\draw[color=black] (3,1) node {$H$};
\draw (7.5,1.5)-- (6,0);
\draw (7.5,1.5)-- (7.5,0);
\draw (7.5,1.5)-- (9,0);
\draw (6,0)-- (7.5,0);
\draw (7.5,0)-- (9,0);
\draw (9,0)-- (10.5,0);
\draw (10.5,0)-- (12,0);
\fill [color=xdxdff] (6,0) circle (1.5pt);
\draw[color=black] (6,-0.25) node {$2$};
\fill [color=xdxdff] (7.5,0) circle (1.5pt);
\draw[color=black] (7.5,-0.25) node {$3$};
\fill [color=xdxdff] (9,0) circle (1.5pt);
\draw[color=black] (9,-0.25) node {$4$};
\fill [color=xdxdff] (10.5,0) circle (1.5pt);
\draw[color=black] (10.5,-0.25) node {$5$};
\fill [color=xdxdff] (12,0) circle (1.5pt);
\draw[color=black] (12,-0.25) node {$6$};
\fill [color=xdxdff] (7.5,1.5) circle (1.5pt);
\draw[color=black] (7.5,1.75) node {$1$};
\draw[color=black] (10.5,1) node {$K$};
\end{tikzpicture}
\caption{Two non-isometric graphs on six vertices with co-spectral
blowups}\label{Fig1}
\end{figure}
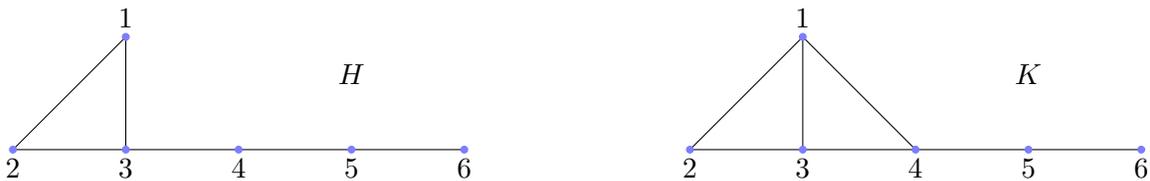
Both graphs have vertex sets $\{ 1, \dots, 6 \}$, and are not isomorphic.
Now define (see Remark~\ref{Rdrury}):
\[
H' := H[(2,1,1,2,1,1)], \qquad K' := K[(2,1,1,1,1,2)].
\]
Then $H', K'$ are not isometric, but a direct computation reveals:
\[
u_{H'}(n) = u_{K'}(n) = -320 n^6 + 3712 n^5 - 10816 n^4 + 10880 n^3 -
1664 n^2 - 2048 n + 256. \qedhere
\]
\end{proof}

\begin{remark}\label{Rdrury}
The graphs $H',K'$ in the preceding proof were not accidental or
providential, but stem from the recent paper~\cite{DL}, which is part of
the literature on exploring which graphs are distance co-spectral (see
the Introduction). In the discussion preceding~\cite[Figure 1]{DL}, the
authors verified that the graphs $H' \not\cong K'$ used in the preceding
proof are indeed distance co-spectral. This result, combined with
Proposition~\ref{Pdistancespectrum}, leads to the above use of $H', K'$
in proving that $u_G$ cannot detect $G$ up to isometry.
\end{remark}

\begin{remark}
As the proof of Theorem~\ref{Tisom} reveals, for any graph metric space
$G = (V,E)$, the Hessian of the blowup-polynomial carries the same
information as the matrix $\D_G \in \mathbb{Z}_{>0}^{V \times V}$:
\begin{equation}
\mathcal{H}(p_G) := ((\partial_{n_v} \partial_{n_{v'}} p_G)({\bf
0}))_{v,v' \in V} = (-2)^{|V|} {\bf 1}_{|V| \times |V|} - (-2)^{|V|-2}
\D_G^{\circ 2},
\end{equation}
where $\D_G^{\circ 2}$ is the entrywise square of the modified distance
matrix $\D_G$.
\end{remark}

\subsection{Complete multipartite graphs via
real-stability}\label{Slorentz}

The next result we show is Theorem~\ref{Tlorentz} (described in the title
of this subsection). Before doing so, we define the three classes of
polynomials alluded to in Corollary~\ref{Clorentz}, as promised there
(and for the self-sufficiency of this paper).

\begin{enumerate}
\item Br\"and\'en--Huh \cite{BH} defined a polynomial $p \in \R[x_1,
\dots, x_k]$ to be {\em Lorentzian} if $p(\cdot)$ is homogeneous of some
degree $d$, has non-negative coefficients, and given any indices $0 \leq
j_1, \dots, j_{d-2} \leq k$, if
\[
g(x_1, \dots, x_k) := \left( \partial_{x_{j_1}} \cdots
\partial_{x_{j_{d-2}} } p \right)(x_1, \dots, x_k),
\]
then the Hessian matrix $\mathcal{H}_g := (\partial_{x_i} \partial_{x_j}
g)_{i,j=1}^k \in \R^{k \times k}$ is Lorentzian.
(This last term means that $\mathcal{H}_g$ is nonsingular and has exactly
one positive eigenvalue.)

\item Suppose $p \in \R[x_1, \dots, x_k]$ has non-negative coefficients.
Gurvits~\cite{Gurvits} defined $p$ to be \textit{strongly log-concave} if
for all $\alpha \in \mathbb{Z}_{\geq 0}^k$, either the derivative
$\displaystyle
\partial^\alpha (p) := \prod_{i=1}^k \partial_{x_i}^{\alpha_i} \cdot p$
is identically zero, or $\log(\partial^\alpha (p))$ is defined and
concave on $(0,\infty)^k$.

\item Suppose $p \in \R[x_1, \dots, x_k]$ has non-negative coefficients.
Anari, Oveis Gharan, and Vinzant~\cite{AGV} defined $p$ to be
\textit{completely log-concave} if for all integers $m \geq 1$ and
matrices $A = (a_{ij}) \in [0,\infty)^{m \times k}$, either the
derivative $\displaystyle
\partial_A (p) := \prod_{i=1}^m \left( \sum_{j=1}^k a_{ij} \partial_{x_j}
\right) \cdot p$
is identically zero, or $\log(\partial_A (p))$ is defined and concave on
$(0,\infty)^k$.
\end{enumerate}

Having written these definitions, we proceed to the main proof.

\begin{proof}[Proof of Theorem~\ref{Tlorentz}]
We prove the cyclic chain of implications:
\[
(4) \ \implies \
(1) \ \implies \
\{ (1), (2) \} \ \implies \
(3) \ \implies \
(2) \ \implies \
(4) \ \Longleftrightarrow \
(5).
\]
We begin with a short proof of $(1) \implies (2)$ via Lorentzian
polynomials from Corollary~\ref{Clorentz}. It was shown in~\cite[pp.\
828--829]{BH} that if $(1)$ holds then $\widetilde{p}_G$ is Lorentzian
(see also \cite[Theorem~6.1]{COSW}), and in turn, this implies $(2)$ by
definition (or by \textit{loc.\ cit.}).

We next show that $(3) \implies (2)$. Observe that
\begin{equation}\label{Erayleigh}
\frac{p_G(-z_1, \dots, -z_k) \cdot (-1)^k}{p_G(-1, \dots, -1) \cdot
(-1)^k} \equiv
\frac{\widetilde{p}_G(1, z_1, \dots, z_k)}{\widetilde{p}_G(1,1,\dots,1)}.
\end{equation}
Now if~$(3)$ holds, then $\widetilde{p}_G(1,1,\dots,1) = (-1)^k p_G(-1,
\dots, -1) > 0$, so the polynomial
\[
(-1)^k p_G(-z_1, \dots, -z_k) = \widetilde{p}_G(1,z_1, \dots, z_k)
\]
has all coefficients non-negative, using~$(3)$ and~\eqref{Erayleigh}.
Since $p_G(\cdot)$ is multi-affine (or by inspecting the form of
$\widetilde{p}_G(\cdot)$), this shows $(3) \implies (2)$.
Now to show $\{ (1), (2) \} \implies (3)$, note that the sum of all
coefficients in $\widetilde{p}_G(\cdot)$ equals
\[
\widetilde{p}_G(1,1,\dots,1) = (-1)^k p_G(-1,\dots,-1),
\]
and by~$(2)$, this dominates the ``constant term'' of $p_G$, i.e.,
\begin{align*}
(-1)^k p_G(-1,\dots,-1) \geq \widetilde{p}_G(1,0,\dots,0) = &\ (-1)^k
p_G(0,\dots,0)\\
= &\ (-1)^k \prod_{v \in V} (-2 d(v, V \setminus \{ v \})) > 0.
\end{align*}
In particular, $(-1)^k p_G(-1,\dots,-1) > 0$, proving a part of~$(3)$.
Hence using~$(2)$ and ~\eqref{Erayleigh}, all coefficients of the
``reflected'' polynomial are non-negative; and the normalization shows that
the coefficients sum to $1$. It remains to show that the ``reflected''
polynomial $p_G(-{\bf z}) / p_G(-1,\dots,-1)$ is real-stable. Once again,
using~\eqref{Erayleigh} and that $(-1)^k p_G(-1,\dots,-1) > 0$, it
suffices to show that $\widetilde{p}_G(1,z_1, \dots, z_k)$ is
real-stable. But this follows from~$(1)$ by specializing to $z_0 \mapsto
1 \in \R$. This finally shows that $(1)$ and $(2)$ together imply $(3)$.

We next show the equivalence of $(4)$ and $(5)$. If $G = K_k$, then $\D_G
= \Id_k + {\bf 1}_{k \times k}$ is positive semidefinite. Hence so is
$\D_{K_k[\bn]}$ for all $\bn$, by Lemma~\ref{Lwellbehaved}(4). The
converse follows from~\cite[Theorem 1.1]{LHWS}, since $\D_G = D_G + 2
\Id_{|V(G)|}$.

Finally, we will show $(2) \implies (4) \implies (1)$. First assume
(2)~$\widetilde{p}_G(\cdot)$ has non-negative coefficients. Fix a subset
$J \subset \{ 1, \dots, k \}$; using Proposition~\ref{Pcoeff}(1), the
coefficient of $z_0^{k - |J|} \prod_{j \in J} z_j$ equals
\[
(-1)^{k - |J|} \cdot \det (\D_G)_{J \times J} \prod_{j \in \{ 1, \dots, k
\} \setminus J} (-d_{jj}) = \det (\D_G)_{J \times J} \prod_{j \in \{ 1,
\dots, k \} \setminus J} 2 d_G(v_j, V \setminus \{ v_j \}).
\]
By the hypotheses, this expression is non-negative for every $J \subset
\{ 1, \dots, k \}$. Hence $\D_G$ has all principal minors non-negative
(and is symmetric), so is positive semidefinite, proving~(4).

Finally, if (4)~$\D_G$ is positive semidefinite, then so is
$\mathcal{C}_G$. As above, write $\mathcal{C}_G =
(-\Delta_{\ba_G})^{-1/2} \D_G (-\Delta_{\ba_G})^{-1/2}$, and compute
using that $-\Delta_{\ba_G}$ is a positive definite diagonal matrix:
\begin{align}\label{Ecomput}
\widetilde{p}_G(z_0, z_1, \dots, z_k) = &\ \det(z_0 \cdot
(-\Delta_{\ba_G}) + \Delta_{\bf z} \D_G)\notag\\
= &\ \det (-\Delta_{\ba_G})^{1/2} \det (z_0 \Id_k + \Delta_{\bf z}
\mathcal{C}_G) \det (-\Delta_{\ba_G})^{1/2}\notag\\
= &\ \det (-\Delta_{\ba_G}) \det (z_0 \Id_k + \Delta_{\bf z}
\mathcal{C}_G).
\end{align}
(As an aside, the second factor in the final expression was termed as the
\textit{multivariate characteristic polynomial of $\mathcal{C}_G$}, in
\cite[Section 4.4]{BH}.)

Now $\mathcal{C}_G$ has a positive semidefinite square root
$\sqrt{\mathcal{C}_G}$ by the hypotheses. We claim that
\[
\det (z_0 \Id_k + \Delta_{\bf z} \sqrt{\mathcal{C}_G}
\sqrt{\mathcal{C}_G}) = \det (z_0 \Id_k + \sqrt{\mathcal{C}_G}
\Delta_{\bf z} \sqrt{\mathcal{C}_G});
\]
this follows by expanding the determinant of
$\begin{pmatrix} z_0 \Id_k & -\sqrt{\mathcal{C}_G} \\ \Delta_{\bf z}
\sqrt{\mathcal{C}_G} & \Id_k \end{pmatrix}$
in two different ways, both using Schur complements. Therefore -- and as
in the proof of Theorem~\ref{Tstable} -- we have:
\begin{align}\label{Emixed}
\begin{aligned}
\widetilde{p}_G(z_0, z_1, \dots, z_k) = &\ \det (-\Delta_{\ba_G}) \det
(z_0 \Id_k + \sqrt{\mathcal{C}_G} \Delta_{\bf z} \sqrt{\mathcal{C}_G})\\
= &\ \det (-\Delta_{\ba_G}) \det \left( z_0 \Id_k + \sum_{j=1}^k z_j
\sqrt{\mathcal{C}_G} E_{jj} \sqrt{\mathcal{C}_G} \right).
\end{aligned}
\end{align}
Now the coefficient of each $z_j, \ j \geq 0$ inside the above
determinant is a positive semidefinite matrix. It follows by
\cite[Proposition 2.4]{BB1} (see the text around~\eqref{EBH}) that
$\widetilde{p}_G(\cdot)$ is real-stable, proving~(1).

This proves the equivalence. The final assertion is immediate from~(4)
via Lemma \ref{Lwellbehaved}(4). 
\end{proof}

\begin{remark}
The assertions in Theorem~\ref{Tlorentz} and Corollary~\ref{Clorentz} are
thus further equivalent to $\mathcal{C}_G$ being a correlation matrix.
Moreover, Theorem~\ref{Tstable} (except the final equivalent assertion
(5)) and Corollary~\ref{Clorentz} hold more generally: for arbitrary
finite metric spaces. We leave the details to the interested reader.
\end{remark}

\begin{remark}
The proof of Theorem~\ref{Tlorentz} also reveals that inspecting the
coefficients in the polynomial $p_G(\cdot)$ or $\widetilde{p}_G(\cdot)$
helps identify the principal minors of $\D_G$ (or $\mathcal{C}_G$) that
are negative, zero, or positive.
\end{remark}

\subsection{Symmetries; monomials with zero coefficients}\label{Rgraphsymm}

The results in this paper are developed with the goal of being used in
proving the main theorems in the opening section. The only exceptions are
one of the Appendices (below), and this present subsection, in which our
goal is to provide further intuition for blowup-polynomials of graph
metric spaces $G$. To do so, we study a concrete family of graph metric
spaces $K^{(l)}_k$ -- for which we compute the blowup-polynomials and
reveal connections to symmetric functions. In addition, these
computations lead to the study of certain monomials whose coefficients in
$p_G$ vanish -- this provides intuition for proving
Theorem~\ref{Ttree-blowup}.

We begin from the proof of Theorem~\ref{Tisom}, which shows that the
blowup-polynomial of $G$ is a \textit{partially symmetric polynomial}, in
the sense of being invariant under the subgroup ${\rm Isom}(G)$ (of
isometries of $G$) of $S_{V(G)}$ (the permutations of $V(G)$). For
instance, $p_G(\bn)$ is symmetric in $\bn$ for $G$ a complete graph;
whereas $p_G$ possesses ``cyclic'' symmetries for $G$ a cycle; and so on.

In addition to these global symmetries (i.e., isometries) of $G$, there
also exist ``local symmetries''. For example, suppose $G = P_k$ is a path
graph, with vertex $x_i$ adjacent to $x_{i \pm 1}$ for $1 < i < k$. For
any $1 < i \leq j < k$, the coefficient of the monomials
\[
n_{x_{i-1}} n_{x_i} \cdots n_{x_j} \qquad \text{and} \qquad
n_{x_i} n_{x_{i+1}} \cdots n_{x_{j+1}}
\]
are equal, by Proposition~\ref{Pcoeff}(3). A similar result holds more
generally for banded graphs with bandwidth $d \geq 1$, in which a vertex
$x_i$ is adjacent to $x_j$ if and only if $0 < |i-j| < d$ and $1 \leq i,j
\leq k$.

\begin{example}
We now use this principle of symmetry to compute the blowup-polynomial
for a two-parameter family of graph metric spaces
\[
G = K_k^{(l)}, \qquad 0 \leq l \leq k-2.
\]
(This will shortly be followed by another application: a sufficient
condition for when certain monomials do not occur in $p_G(\bn)$.)
Here, $K_k^{(l)}$ denotes the complete graph $K_k$ from which the edges
$(1,2), \dots, (1,l+1)$ have been removed. This leads to three types of
vertices:
\[
\{ 1 \}, \qquad \{ 2, \dots, l+1 \}, \qquad \{ l+2, \dots, k \},
\]
and correspondingly, the isometry group ${\rm Isom}(K_k^{(l)}) \cong S_l
\times S_{k-l-1}$. Notice that the vertices $2, \dots, k$ form a complete
induced subgraph of $K_k^{(l)}$.

The graphs $K_k^{(l)}$ are all chordal (i.e., do not contain an induced
$m$-cycle for any $m \geq 4$), and include as special cases: complete
graphs (for $l=0$) as well as complete graphs with one pendant vertex
(for $l=k-2$). The ``almost complete graph'' $K_k^{(1)}$ (missing exactly
one edge) is another special case, important from the viewpoint of matrix
positivity: it was crucially used in \cite{GKR-critG} to compute a graph
invariant which arose out of analysis and positivity, for every chordal
graph. This was termed the \textit{critical exponent} of a graph in
\cite{GKR-critG}, and seems to be a novel graph invariant.

By the remarks above, the blowup-polynomial in the $n_{v_i}$ (which we
will replace by $n_i,\ 1 \leq i \leq k$ for convenience) will be
symmetric separately in $\{ n_2, \dots, n_{l+1} \}$ and in $\{ n_{l+2},
\dots, n_k \}$. In particular, since the polynomial is multi-affine in
the $n_i$, the only terms that appear will be of the form
\begin{equation}\label{Ecoeff}
n_1^{\varepsilon_1} e_r(n_2, \dots, n_{l+1}) e_s(n_{l+2}, \dots, n_k),
\end{equation}
where $\varepsilon_1 = 0$ or $1$, and $e_r(\cdot)$ is the elementary
symmetric (homogeneous, multi-affine, and in fact real-stable) polynomial
for every $r \geq 0$ (with $e_0(\cdot) := 1$).
\end{example}

With this preparation, we can state and prove the following result for
the graphs $K_k^{(l)}$.

\begin{proposition}\label{PKkl}
Fix non-negative integers $k,l$ such that $0 \leq l \leq k-2$. With
$K_k^{(l)}$ as defined above, and denoting $n_{v_i}$ by $n_i$ for
convenience (with $1 \leq i \leq k$), we have
\begin{align*}
p_{K_k^{(l)}}(\bn) = &\ \sum_{r=0}^l \sum_{s=0}^{k-l-1} \left[
(-2)^{k-r-s} (1 + r + s) \right] e_r(n_2, \dots, n_{l+1}) e_s(n_{l+2},
\dots, n_k)\\
&\ + n_1 \sum_{r=0}^l \sum_{s=0}^{k-l-1} \left[ (-2)^{k-r-s-1} (1 -r)
(s+2) \right] e_r(n_2, \dots, n_{l+1}) e_s(n_{l+2}, \dots, n_k).
\end{align*}
\end{proposition}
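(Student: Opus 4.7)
The plan is to apply Proposition~\ref{Pcoeff}(1) and exploit the partial symmetry group $S_l \times S_{k-l-1}$ of $K_k^{(l)}$. Since every vertex of $K_k^{(l)}$ has at least one neighbor, $d(v, V \setminus \{v\}) = 1$ for every $v$, so the coefficient of $\prod_{i \in I} n_i$ in $p_{K_k^{(l)}}(\bn)$ reduces to $\det(\D_{K_k^{(l)}})_{I \times I} \cdot (-2)^{k - |I|}$. Because the isometries permute $\{2, \ldots, l+1\}$ and $\{l+2, \ldots, k\}$ independently, this principal minor depends only on the triple $(\varepsilon_1, r, s) := (\mathbf{1}(1 \in I),\ |I \cap \{2, \ldots, l+1\}|,\ |I \cap \{l+2, \ldots, k\}|)$. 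Summing $\prod_{i \in I} n_i$ over all $I$ sharing a given triple produces $n_1^{\varepsilon_1} e_r(n_2, \ldots, n_{l+1}) e_s(n_{l+2}, \ldots, n_k)$, matching the shape of the claimed expansion.

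Two principal-minor computations then remain. When $\varepsilon_1 = 0$, the indexed vertices lie entirely inside the complete induced subgraph on $\{2, \ldots, k\}$, so the submatrix equals $\Id_{r+s} + \mathbf{1}_{(r+s) \times (r+s)}$, with determinant $r + s + 1$; multiplying by $(-2)^{k-r-s}$ reproduces the first double sum. When $\varepsilon_1 = 1$, I would write the submatrix as $\Id_{r+s+1} + \mathbf{1} \mathbf{1}^T + (\mathbf{e}_1 \bv^T + \bv \mathbf{e}_1^T)$, where $\mathbf{e}_1$ marks vertex $1$ and $\bv$ is the indicator vector of the $r$ selected vertices in $\{2, \ldots, l+1\}$; the rank-one correction captures the fact that vertex $1$ is at distance $2$ rather than $1$ from those $r$ vertices. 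Presenting this as $\Id + A B A^T$ with $A = [\mathbf{1}\ \mathbf{e}_1\ \bv]$ and a fixed $3 \times 3$ matrix $B$, the matrix-determinant lemma reduces the evaluation to $\det(\Id_3 + B A^T A)$, a small explicit $3 \times 3$ determinant that simplifies to $(1 - r)(s + 2)$.

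Assembling the two cases with the respective factors $(-2)^{k-r-s}$ and $(-2)^{k-r-s-1}$, and rewriting subset sums via the elementary symmetric polynomials $e_r(n_2, \ldots, n_{l+1})$ and $e_s(n_{l+2}, \ldots, n_k)$, yields the proposition. The main obstacle is the $\varepsilon_1 = 1$ determinant, since the structure is no longer fully symmetric across the two vertex classes; once it is recognized as a rank-two perturbation of $\Id + \mathbf{1}\mathbf{1}^T$, however, the reduction to a $3 \times 3$ calculation keeps the evaluation short and conceptual.
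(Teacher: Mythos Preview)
Your proposal is correct and follows the paper's own argument closely: both proofs invoke Proposition~\ref{Pcoeff}(1), use the $S_l \times S_{k-l-1}$ symmetry to reduce to the triple $(\varepsilon_1,r,s)$, and handle the $\varepsilon_1=0$ case via $\det(\Id_{r+s}+\mathbf{1}\mathbf{1}^T)=r+s+1$.

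The only genuine difference is in the $\varepsilon_1=1$ determinant. The paper treats $\D_{r,s}$ as a bordered matrix and applies the Schur complement with respect to the $(r+s)\times(r+s)$ block $C_{r,s}=\Id+\mathbf{1}\mathbf{1}^T$, reducing to the scalar $2 - v^T C_{r,s}^{-1} v$ with $v=(2\mathbf{1}_r^T,\mathbf{1}_s^T)^T$. You instead recognize $\D_{r,s}$ as a rank-three perturbation $\Id + ABA^T$ of the identity and invoke Sylvester's identity $\det(\Id+ABA^T)=\det(\Id_3+BA^TA)$, landing on an explicit $3\times3$ determinant. Both routes are short and standard; your low-rank viewpoint is arguably a bit more structural (it makes transparent \emph{why} the answer depends only on $r$ and $s$ through the Gram matrix $A^TA$), while the paper's Schur complement is slightly more direct since $C_{r,s}^{-1}$ is already known. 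Either way the result is $(1-r)(s+2)$, and the assembly into the double sums is identical.
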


Notice that setting $n_1=0$, we obtain the blowup-polynomial of the
complete graph $K_{k-1}$, in the variables $n_2, \dots, n_k$. This
clearly equals (via $r+s \leadsto s$):
\[
\sum_{s=0}^{k-1} (-2)^{k-s} (1+s) e_s(n_2, \dots, n_k),
\]
and it also equals the expression in~\eqref{Ecomplete} (modulo
relabelling of the variables) since the underlying metric spaces are
isometric. A similar computation holds upon working with $l=0$.

\begin{proof}[Proof of Proposition~\ref{PKkl}]
We begin by exploiting the symmetries in $K_k^{(l)}$, which imply that
given a subset $I \subset \{ 1, \dots, k \}$, the coefficient of
$\prod_{i \in I} n_i$ depends only on the three integers
\[
\varepsilon_1 := {\bf 1}(1 \in I), \qquad r := \# (I \cap \{ 2, \dots,
l+1 \}), \qquad s := \# (I \cap \{ l+2, \dots, k \}).
\]

Now using~\eqref{Ecoeff}, it follows that the
blowup-polynomial indeed has the desired form:
\begin{align*}
p_{K_k^{(l)}}(\bn) = &\ \sum_{r=0}^l \sum_{s=0}^{k-l-1} a_{0,r,s}
e_r(n_2, \dots, n_{l+1}) e_s(n_{l+2}, \dots, n_k)\\
&\ + n_1 \sum_{r=0}^l \sum_{s=0}^{k-l-1} a_{1,r,s} e_r(n_2, \dots,
n_{l+1}) e_s(n_{l+2}, \dots, n_k),
\end{align*}
for some coefficients $a_{\varepsilon_1,r,s} \in \R$. It remains to
compute these coefficients, and we begin with the coefficients
$a_{0,r,s}$. By a computation akin to the proof of
Proposition~\ref{Pcoeff}(1), these are obtained by specializing $n_1 =
0$, which leads to a power of $(-2)$ times a principal minor of
$\D_{K_k^{(l)}}$ not involving its first row or column. But this is the
determinant of a principal submatrix of
\[
\Id_{k-1} + {\bf 1}_{(k-1) \times (k-1)}
\]
of size $(r+s) \times (r+s)$, and so a direct computation implies the
desired formula:
\[
a_{0,r,s} = (-2) \cdot (-2)^{k-r-s-1} \cdot (1 + r + s).
\]

It remains to compute $a_{1,r,s}$, which equals $(-2)^{k-r-s-1}$ times
the determinant of the block matrix
\[
\D_{r,s} := \begin{pmatrix}
2 & 2 {\bf 1}_{r \times 1}^T & {\bf 1}_{s \times 1}^T \\
2 {\bf 1}_{r \times 1} & \Id_r + {\bf 1}_{r \times r} & {\bf 1}_{r \times
s} \\
{\bf 1}_{s \times 1} & {\bf 1}_{s \times r} & \Id_s + {\bf 1}_{s \times
s} \end{pmatrix} 
= \begin{pmatrix} 2 & v^T \\ v & C_{r,s} \end{pmatrix}
\in \R^{(1+r+s) \times (1+r+s)},
\]
where $v := (2 {\bf 1}_{r \times 1}^T \quad {\bf 1}_{s \times
1}^T)^T \in \R^{r+s}$, and $C_{r,s} = \Id_{r+s} + {\bf 1}_{(r+s) \times
(r+s)}$ denotes the principal submatrix of $\D_{r,s}$ obtained by
removing its first row and column. Now $C_{r,s}^{-1} = \Id - (1+r+s)^{-1}
{\bf 1}_{(r+s) \times (r+s)}$, so using Schur complements,
\begin{align*}
&\ \det \D_{r,s}\\
= &\ \det C_{r,s} \cdot \det \left( 2 - v^T C_{r,s}^{-1} v \right)\\
= &\ 2(1+r+s) - \begin{pmatrix} 2 {\bf 1}_{r
\times 1} \\ {\bf 1}_{s \times 1} \end{pmatrix}^T
\begin{pmatrix} (1+r+s) \Id_r - {\bf 1}_{r \times r} & - {\bf 1}_{r
\times s} \\ - {\bf 1}_{s \times r} & (1+r+s) \Id_s - {\bf 1}_{s \times
s} \end{pmatrix}
\begin{pmatrix} 2 {\bf 1}_{r \times 1} \\ {\bf 1}_{s \times 1}
\end{pmatrix},
\end{align*}
and a straightforward (if careful) calculation reveals this quantity to
equal $(1-r)(s+2)$.
\end{proof}

\begin{example}\label{Exbipartite}
As an additional example, we compute the blowup-polynomial for several
other graphs at once. The path graph $P_3$ is a special case of the star
graphs $K_{1,k-1}$, and in turn, these as well as the cycle graph $C_4$
are special cases of complete bipartite graphs $K_{r,s}$. As $K_{r,s} =
K_2[(r,s)]$ is a blowup, we can use Lemma~\ref{Lwellbehaved}(2) and
Equation~\eqref{Ecomplete} for $k=2$, to obtain:
\begin{align}\label{Ebipartite}
\begin{aligned}
&\ p_{K_{r,s}}(n_1, \dots, n_r; m_1, \dots, m_s)\\
= &\ (-2)^{r+s-2} \left( 3 \sum_{i=1}^r n_i \cdot \sum_{j=1}^s m_j - 4
\sum_{i=1}^r n_i - 4 \sum_{j=1}^s m_j + 4 \right).
\end{aligned}
\end{align}
\end{example}

As one observes by visual inspection, the coefficient in
Proposition~\ref{PKkl} of $n_1 n_2$ times any monomial in the
(``type-$3$'') node-variables $n_{l+2}, \dots, n_k$, vanishes. Similarly,
in~\eqref{Ebipartite} there are many coefficients that vanish -- in fact,
every coefficient of total degree at least $3$. These facts can be
explained more simply, by the following result about zero terms in the
blowup-polynomial:

\begin{proposition}\label{Pzeroterms}
Suppose $G,H$ are graph metric spaces, and $\bn \in
\mathbb{Z}_{>0}^{V(G)}$ a tuple of positive integers, such that $G[\bn]$
isometrically embeds as a subgraph metric space inside $H$. Also suppose
$n_v \geq 2$ for some $v \in V(G)$, and $v_1, v_2 \in G[\bn]$ are copies
of $v$. Then for every subset of vertices $\{ v_1, v_2 \} \subset S
\subset V(G[\bn])$, the coefficient of $\prod_{s \in S} n_s$ in
$p_H(\cdot)$ is zero.
\end{proposition}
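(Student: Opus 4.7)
The plan is to invoke Proposition~\ref{Pcoeff}(1), which expresses the coefficient of $\prod_{s \in S} n_s$ in $p_H(\bn)$ as $\det(\D_H)_{S \times S} \cdot \prod_{j \not\in S}(-d_{jj})$, where the $d_{jj}$ are the diagonal entries of $\D_H$. Since these diagonal entries are all strictly positive, the task reduces to proving that the principal submatrix $(\D_H)_{S \times S}$ is singular. I would do this by exhibiting two identical rows in it, indexed by $v_1$ and $v_2$.

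The first key observation is that for any graph metric space $Y$ with $|V(Y)| \geq 2$, connectedness forces every vertex to have a neighbor at distance $1$; hence $d_Y(y, Y \setminus \{y\}) = 1$ for every $y \in V(Y)$, and consequently every diagonal entry of $\D_Y$ equals exactly $2$. Applying this to $H$ yields $(\D_H)_{ww} = 2$ for all $w \in V(H)$. The second observation uses the blowup construction from Definition~\ref{Ddef}: since $d_G(v, V \setminus \{v\}) = 1$, we have $d_{G[\bn]}(v_1, v_2) = 2 d_G(v, V \setminus \{v\}) = 2$; and for any other $w \in V(G[\bn])$ the distances satisfy $d_{G[\bn]}(v_1, w) = d_{G[\bn]}(v_2, w)$, because blowup distances depend only on the original vertex of which each point is a copy (whether $w$ is another copy of $v$ or a copy of some $u \neq v$). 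The isometric embedding $G[\bn] \hookrightarrow H$ transfers these identities verbatim to $H$: $d_H(v_1, v_2) = 2$, and $d_H(v_1, w) = d_H(v_2, w)$ for every $w \in V(G[\bn]) \setminus \{v_1, v_2\}$.

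Comparing rows $v_1$ and $v_2$ of $(\D_H)_{S \times S}$ then yields equality entry by entry: both diagonal entries equal $2$; the cross entries at positions $(v_1, v_2)$ and $(v_2, v_1)$ both equal $d_H(v_1, v_2) = 2$; and for every other $w \in S \subset V(G[\bn])$, the off-diagonal entries $(\D_H)_{v_1, w} = d_H(v_1, w)$ and $(\D_H)_{v_2, w} = d_H(v_2, w)$ coincide. Hence the two rows are identical, giving $\det(\D_H)_{S \times S} = 0$ and therefore the claimed vanishing.

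I do not anticipate any real obstacle here. The crux is recognizing that for graph metric spaces the modified distance matrix has the uniform value $2$ on its diagonal; this is precisely what upgrades the copy-symmetry of the blowup from ``proportional rows'' to \emph{literally equal} rows in $(\D_H)_{S \times S}$, making the determinant vanish essentially by inspection.
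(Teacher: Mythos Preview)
Your proof is correct and follows essentially the same approach as the paper: reduce via Proposition~\ref{Pcoeff}(1) to showing $\det(\D_H)_{S \times S} = 0$, then exhibit rows $v_1$ and $v_2$ as identical. The only cosmetic difference is that the paper transfers the computation to $(\D_{G[\bn]})_{S \times S}$ and invokes Lemma~\ref{Lwellbehaved}(3) to get the identical rows, whereas you verify the row equality entry by entry directly inside $(\D_H)_{S \times S}$; both routes rely on the same observation that graph metric spaces have all diagonal entries of $\D$ equal to $2$.
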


For example, for $H$ the path graph $P_4$ with vertices $a - b - c - d$,
the coefficients of $n_a n_c, n_a n_b n_c$, and $n_b n_d, n_b n_c n_d$ in
$p_H(\bn)$ are all zero, since the path subgraphs $a-b-c$ and $b-c-d$ are
both isomorphic to the graph blowup $K_2[(1,2)]$.

This result also extends to arbitrary finite metric spaces.

\begin{proof}
By Proposition~\ref{Pcoeff}, the coefficient of $\bn^S$ (whose meaning is
clear from context) in $p_H(\{ n_w : w \in V(H) \})$ is a scalar times
$\det (\D_H)_{S \times S}$. Since $G[\bn]$ is a metric subspace of $H$,
it suffices to show that $\det (\D_{G[\bn]})_{S \times S} = 0$, since
this matrix agrees with $(\D_H)_{S \times S}$. But since $v_1, v_2 \in
V(G[\bn])$ are copies of $v \in V(G)$, the matrix $\D_{G[\bn]}$ has two
identical rows by Lemma~\ref{Lwellbehaved}(3). It follows that $\det
(\D_{G[\bn]})_{S \times S} = 0$, and the proof is complete.
\end{proof}

\subsection{The tree-blowup delta-matroid}

We conclude this section by proving Theorem~\ref{Ttree-blowup} about the
delta-matroid $\mathcal{M}'(T)$ for every tree, which seems to be
unstudied in the literature. We then explore
(a)~if this delta-matroid equals the blowup delta-matroid
$\mathcal{M}_{\D_T}$; and
(b)~if this construction can be extended to arbitrary graphs.

To motivate the construction of $\mathcal{M}'(T)$, which we term the
\textit{tree-blowup delta-matroid}, we begin by applying
Proposition~\ref{Pzeroterms} with $G = P_2 = K_2$ and $G[\bn] = P_3$. An
immediate consequence is:

\begin{corollary}\label{Cp3}
If $a,b,c \in V(H)$ are such that $b$ is adjacent to $a,c$ but $a,c$ are
not adjacent, then $n_a n_c$ and $n_a n_b n_c$ have zero coefficients in
$p_H(\bn)$.
\end{corollary}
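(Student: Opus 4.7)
The plan is to apply Proposition \ref{Pzeroterms} directly, with the simplest possible choice of ambient metric space $G$ and blowup tuple $\bn$. Specifically, I would take $G = K_2 = P_2$ with vertex set $\{u,v\}$, and the tuple $\bn$ with $n_u = 1$, $n_v = 2$. Then $G[\bn]$ has three points -- the single copy of $u$, and two copies $v_1, v_2$ of $v$ -- with pairwise distances $d(u,v_1) = d(u,v_2) = 1$ and $d(v_1,v_2) = 2 \, d_G(v, V(G)\setminus\{v\}) = 2$. In other words, $G[\bn]$ is the path $P_3$ on three vertices, and $v_1, v_2$ are both copies of $v \in V(G)$ with $n_v = 2 \geq 2$.

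Next, I would verify that $G[\bn]$ embeds isometrically into $H$ via the identification $u \leftrightarrow b$, $v_1 \leftrightarrow a$, $v_2 \leftrightarrow c$. Since $b$ is adjacent in $H$ to both $a$ and $c$, we have $d_H(a,b) = d_H(b,c) = 1$, matching $G[\bn]$; the one nontrivial check is that $d_H(a,c) = 2$. But $b$ is a common neighbor, so $d_H(a,c) \leq 2$, and the hypothesis $a \not\sim c$ in $H$ forces $d_H(a,c) \geq 2$. Hence the embedding preserves distances, and $G[\bn]$ sits inside $H$ as an isometric subgraph metric space.

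Finally, I would invoke Proposition \ref{Pzeroterms}: for every vertex subset $S \subset V(H)$ containing the pair of copies $\{v_1, v_2\} = \{a,c\}$, the coefficient of $\prod_{s \in S} n_s$ in $p_H(\bn)$ vanishes. Specializing to $S = \{a,c\}$ and $S = \{a,b,c\}$ yields the two vanishing statements claimed. There is essentially no serious obstacle here -- the entire content of the corollary is packaged into verifying that the induced $P_3$ on $\{a,b,c\}$ is isometrically embedded, which is immediate from the hypothesis on $b$ being a common neighbor and $a,c$ being non-adjacent.
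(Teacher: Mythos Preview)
Your proposal is correct and follows exactly the paper's approach: the corollary is stated in the paper as an immediate consequence of applying Proposition~\ref{Pzeroterms} with $G = P_2 = K_2$ and $G[\bn] = P_3$, which is precisely what you do. Your write-up simply makes explicit the isometric embedding check (via $d_H(a,c)=2$) and the two choices $S = \{a,c\}$, $\{a,b,c\}$ that the paper leaves implicit.
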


In light of this, we take a closer look at $\D_X$ for $X = P_k$ a path
graph. Given an integer $k \geq 1$, the \emph{path graph} $P_k$ has
vertex set $\{ 1, \dots, k \}$, with vertices $i,j$ adjacent if and only
if $|i-j|=1$.

Recall from \cite{Bouchet2} that the set system $\mathcal{M}_{\D_X}$
(defined a few lines before Corollary~\ref{Cdeltamatroid}) is a linear
delta-matroid for every finite metric space -- in particular, for every
graph metric space, such as $P_k$. It turns out (by inspection) that
Corollary~\ref{Cp3} describes \textit{all} of the principal minors of
$\D_{P_k}$ which vanish -- i.e., the complement of the delta-matroid
$\mathcal{M}_{\D_{P_k}}$ in the power-set of $\{ 1, \dots, k \}$ -- for
\textit{small} values of $k$. While this may or may not hold for general
$k$, the same construction still defines a delta-matroid, and one that is
hitherto unexplored as well:

\begin{proposition}\label{Ppk}
Given an integer $k \geq 3$, define the set system
\[
\mathcal{B}(P_k) := 2^{\{ 1, \dots, k \}} \setminus \left\{ \ \{ i, i+1,
i+2 \}, \ \ \{ i, i+2 \} \ : \ 1 \leq i \leq k-2 \right\}.
\]
Then $\mathcal{B}(P_k)$ is a delta-matroid.
\end{proposition}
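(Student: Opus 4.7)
The plan is to verify the two delta-matroid axioms directly. Nonemptiness and ground-set coverage are immediate: $\emptyset \in \mathcal{B}(P_k)$, and every singleton $\{i\}$ is feasible, so $\bigcup_{F \in \mathcal{B}(P_k)} F = \{1,\dots,k\}$. The content lies in the symmetric exchange axiom. So fix $A,B \in \mathcal{B}(P_k)$ and $x \in A \Delta B$. If $F := A \Delta \{x\}$ is already feasible, take $y = x$ (recall $\{x,y\}=\{x\}$ as a set); otherwise $F$ is one of the forbidden sets, i.e.\ $F = \{a, a+2\}$ or $F = \{a, a+1, a+2\}$ for some $a \in \{1,\dots,k-2\}$. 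The feasibility of $A = F \Delta \{x\}$ forces $x \neq a+1$ in both cases, since toggling by $a+1$ would turn $F$ into its ``partner'' infeasible set.

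The key structural step is to check that in both cases the set of \emph{bad} $y$ -- those for which $A \Delta \{x,y\}$ is infeasible -- is exactly $\{x, a+1\}$. Indeed, $y = x$ gives back $F$, and $y = a+1$ gives $F \Delta \{a+1\}$, which is the partner infeasible set with the same index $a$. For every other $y$, a direct enumeration shows that $F \Delta \{y\}$ is either a singleton ($y \in F$ when $F = \{a,a+2\}$), a consecutive pair $\{a+1,a+2\}$ or $\{a,a+1\}$ ($y \in \{a,a+2\}$ when $F = \{a,a+1,a+2\}$), a 3-element set $\{a, a+2, y\}$ with $y \neq a+1$ (when $F = \{a,a+2\}$ and $y \notin F \cup \{a+1\}$), or a set of size at least $4$ ($y \notin F$ when $F = \{a,a+1,a+2\}$); none of these matches either forbidden pattern, so all are feasible. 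Since $x \neq a+1$, the bad set $\{x, a+1\}$ has exactly two elements.

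To finish, suppose for contradiction that no good $y$ lies in $A \Delta B$, so $A \Delta B \subseteq \{x, a+1\}$. Since $x \in A \Delta B$, this forces $A \Delta B \in \{\{x\},\{x,a+1\}\}$, which means $B$ equals either $A \Delta \{x\} = F$ or $A \Delta \{x, a+1\} = F \Delta \{a+1\}$. Both of these are infeasible, contradicting $B \in \mathcal{B}(P_k)$. Hence some good $y \in (A \Delta B) \setminus \{x, a+1\}$ exists, and for it $A \Delta \{x,y\} \in \mathcal{B}(P_k)$, as required. The main obstacle is the structural claim of the middle paragraph -- that the bad set is as small as two elements -- since this is what makes the feasibility of $B$ alone strong enough to rule out the remaining obstruction; if the bad set were larger, the only two subsets of it containing $x$ would no longer be forced to be the two ``partner'' infeasible sets and the clean contradiction above would fail.
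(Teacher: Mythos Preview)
Your proof is correct. The verification that the ``bad'' set is exactly $\{x,a+1\}$ is clean and complete, and the final contradiction (that $A\Delta B\subseteq\{x,a+1\}$ forces $B$ to be one of the two partner infeasible sets) is the right way to close the argument.

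The paper, however, takes a different route: it does not prove Proposition~\ref{Ppk} directly, but instead states it and immediately generalizes to arbitrary trees (Theorem~\ref{Ttree-blowup}), proving that $\mathcal{M}'(T)$ is a delta-matroid via a structural case analysis on Steiner trees $T(A)$, $T(A\sqcup\{x\})$, $T(A\setminus\{x\})$ and Corollary~\ref{Ctree-blowup}. The path case then follows since $\mathcal{M}'(P_k)=\mathcal{B}(P_k)$. Your argument is more elementary and fully self-contained for paths: you exploit the explicit, finite list of forbidden sets (two per index $a$) and the fact that they come in ``partner'' pairs under toggling $a+1$, avoiding any tree-structural reasoning. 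The paper's approach buys generality across all trees at the cost of needing the Steiner-tree machinery and a split into $x\in B\setminus A$ versus $x\in A\setminus B$; your approach buys a short uniform argument (no such split) but is specific to $P_k$ and would not extend to general trees without substantial modification, since there the infeasible sets no longer come in such tightly controlled pairs.
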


We now strengthen this result to the case of arbitrary trees (for
completeness, recall that a tree is a finite connected graph without a
cycle $C_n$ for $n \geq 3$). We begin with a few basic observations on
trees, which help in the next proofs. Every pair of vertices in $T$ is
connected by a unique path in $T$. Moreover, every connected sub-graph of
$T$ is a tree, and every (nonempty) set $I$ of vertices of $T$ is
contained in a unique smallest sub-tree $T(I)$, called its
\textit{Steiner tree}. We also recall that a \textit{leaf}, or a
\textit{pendant vertex}, is any vertex with degree one, i.e., adjacent to
exactly one vertex.

\begin{definition}
Let $T = (V,E)$ be a finite connected, unweighted, tree with the (unique)
edge-distance metric.
We say that a subset of vertices $I \subset V$ is \emph{infeasible} if
there exist vertices $v_1 \neq v_2$ in $I$, such that the Steiner tree
$T(I)$ has $v_1, v_2$ as leaves, both adjacent to the same (unique)
vertex.
\end{definition}

With this terminology at hand, Theorem~\ref{Ttree-blowup} asserts that
the feasible subsets of $V$ form a delta-matroid.
Note, if $T = P_k$ is a path graph then $\mathcal{M}'(T)$ equals the
delta-matroid $\mathcal{B}(P_k)$ above.

The proof of Theorem~\ref{Ttree-blowup} requires a preliminary result,
which characterizes when a graph $G$ is a nontrivial blowup, and which
then connects this to the distance spectrum of $G$.

\begin{proposition}\label{Pblowup}
Suppose $G = (V,E)$ is a graph metric space. Then each of the following
statements implies the next:
\begin{enumerate}
\item $G$ is a nontrivial blowup, i.e., a blowup of a graph metric space
$H$ with $|V(H)| < |V(G)|$.

\item $G$ contains two vertices $v,w$ with the same set of neighbors. (In
particular, $d_G(v,w) = 2$.)

\item $-2$ is an eigenvalue of the distance matrix $D_G$.

\item The blowup-polynomial has total degree strictly less than $|V|$.
\end{enumerate}
In fact $(1) \Longleftrightarrow (2) \implies (3) \Longleftrightarrow
(4)$, but all four assertions are not equivalent.
\end{proposition}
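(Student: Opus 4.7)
The plan is to prove the forward chain $(1) \Rightarrow (2) \Rightarrow (3) \Rightarrow (4)$, the two converses $(2) \Rightarrow (1)$ and $(4) \Rightarrow (3)$, and finally to exhibit a graph witnessing that $(3)$ does not imply $(2)$. The equivalence of $(1)$ and $(2)$ is essentially formal: for $(1) \Rightarrow (2)$, if $G \cong H[\bn]$ with some $n_w \geq 2$, any two copies of $w$ have the same open neighborhood, namely the union of all copies of $N_H(w)$. Conversely, given twins $v \neq w$ in $G$, the induced subgraph $H := G - w$ is connected (every neighbor of $w$ is also a neighbor of $v$) and $d_G(v, w) = 2$ (twins are non-adjacent yet share a common neighbor by connectedness). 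Then $G$ is isomorphic as a graph metric space to the blowup $H[\bn']$ with $n'_v = 2$ and all other multiplicities $1$: adjacencies match via $v^{(1)} \mapsto v,\ v^{(2)} \mapsto w$, and the blowup definition assigns the two copies of $v$ the distance $2 d(v, V(H) \setminus \{v\}) = 2 = d_G(v, w)$.

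For $(2) \Rightarrow (3)$ I will verify directly that if $v, w$ are twins then $D_G(e_v - e_w) = -2(e_v - e_w)$: each $u \neq v, w$ satisfies $d(u, v) = d(u, w)$ (any shortest $u$-to-$v$ path has its penultimate vertex in $N(v) = N(w)$, so rerouting through $w$ yields an equally short $u$-to-$w$ path), while the diagonal entries vanish and $d(v, w) = 2$, producing the required cancellations. For $(3) \Leftrightarrow (4)$, I will combine two earlier inputs: since $\D_G = D_G + 2\Id$, the condition that $-2$ is an eigenvalue of $D_G$ is equivalent to $\det \D_G = 0$; and by Proposition~\ref{Pcoeff}(1) applied with $I = V$, the coefficient of the top monomial $\prod_{v \in V} n_v$ in $p_G(\bn)$ is exactly $\det \D_G$, while Theorem~\ref{Tmetricmatrix} shows $\deg p_G \leq |V|$. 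Hence the total degree drops strictly below $|V|$ precisely when this top coefficient vanishes, i.e.\ precisely when $\det \D_G = 0$.

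The main obstacle is producing a graph that satisfies $(3)$--$(4)$ but not $(1)$--$(2)$, i.e.\ one with $-2$ an eigenvalue of $D_G$ yet no twin pair. I plan to use the triangular prism $G_0 := K_3 \square K_2$ on six vertices; its six open neighborhoods are pairwise distinct, so $G_0$ has no twins. Ordering the vertices triangle-by-triangle, the distance matrix takes the $3 \times 3$ block form
\[
D_{G_0} = \begin{pmatrix} J - I & 2J - I \\ 2J - I & J - I \end{pmatrix}, \qquad J, I \in \R^{3 \times 3}.
\]
Decomposing $\R^6$ into the symmetric and antisymmetric subspaces of the involution that swaps the two triangles, $D_{G_0}$ acts as $3J - 2I$ on the symmetric part (spectrum $\{7, -2, -2\}$) and as $-J$ on the antisymmetric part (spectrum $\{-3, 0, 0\}$). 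Thus $-2$ is a double eigenvalue of $D_{G_0}$, establishing $(3)$ and $(4)$ without $(1)$ or $(2)$, and completing the proof.
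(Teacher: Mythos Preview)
Your proof is correct and follows essentially the same plan as the paper's: the chain $(2)\Rightarrow(1)\Rightarrow(2)\Rightarrow(3)\Leftrightarrow(4)$ is handled in the same way, with the only substantive differences being in two places.

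First, for $(2)\Rightarrow(3)$ you give a direct eigenvector computation $D_G(e_v-e_w)=-2(e_v-e_w)$, whereas the paper routes through $(1)$ and Lemma~\ref{Lwellbehaved}(3) to conclude that $\D_G=D_G+2\Id$ has two identical rows and is therefore singular. Both are immediate; yours is slightly more self-contained.

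Second, and more interestingly, your counterexample is different. The paper uses the path $P_9$, noting that $\det\D_{P_9}=0$ while $P_9$ has no twins. You use the triangular prism $K_3\,\square\,K_2$, which is smaller (six vertices) and whose spectrum you compute cleanly via the block structure under the swap-the-triangles involution, obtaining eigenvalues $\{7,-2,-2,-3,0,0\}$. This is a genuinely nicer witness: the $-2$ eigenvalue is visible without computing a $9\times 9$ determinant, and the absence of twins is immediate from the six distinct open neighborhoods. The paper's choice of $P_9$ does double duty elsewhere (see Proposition~\ref{Ppath} and the discussion of the delta-matroid $\mathcal{M}'(P_k)$), which may explain why it was preferred there; but for the present proposition your example is cleaner.
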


\begin{proof}
We show the implications $(2) \implies (1) \implies (2) \implies (3)
\implies (4) \implies (3)$. If~(2) holds, and $H$ is the induced subgraph
of $G$ on $V(G) \setminus \{ w \}$, then it is easy to see that $G =
H[\bn]$, where $n_{v'} = 1$ for $v' \neq v$, $n_v = 2$, and $w$ is the
other copy of $v$. This shows~(1); conversely, if~(1) holds then there
exist two vertices $v \neq w \in V(G)$ that are copies of one another.
But then $v,w$ share the same neighbors, so $0 < d_G(v,w) \leq 2$. If
$d_G(v,w) = 1$ then $v,w$ are neighbors of each other but not of
themselves, which contradicts the preceding sentence. This shows that
$(1) \implies (2)$.

Now suppose~(2) holds, and hence so does~(1). By
Lemma~\ref{Lwellbehaved}(3), $\D_G = D_G + 2 \Id_V$ has two identical
rows, so is singular. This shows~(3). (This implication is also mentioned
in \cite[Theorem 2.34]{AH}.) Finally,~(3) holds if and only if $\D_G$ is
singular as above. By Proposition~\ref{Pcoeff}(1), this is if and only if
the coefficient of the unique top-degree monomial $\prod_{v \in V(G)}
n_v$ in $p_G(\bn)$ vanishes, which shows that $(3) \Longleftrightarrow
(4)$.

To show~(3) does not imply~(2), consider the path graph $P_9$ (with edges
$\{ i, i+1 \}$ for $0 < i < 9$). It is easy to check that $\det \D_{P_9}
= 0$, so that~(3) holds; and also that~(2) does not hold.
\end{proof}

Specializing Proposition~\ref{Pblowup} to trees, we obtain:

\begin{corollary}\label{Ctree-blowup}
A tree $T$ is a blowup of a graph $G$ if and only if
(a)~$G$ is a sub-tree of $T$, and
(b)~the only vertices of $G$ which are ``copied'' are a set of leaves of
$G$.
\end{corollary}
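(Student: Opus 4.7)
The plan is to deduce the corollary directly from the definition of graph blowup together with the 4-cycle argument that underlies $(1)\Leftrightarrow(2)$ of Proposition~\ref{Pblowup}. Thus the two directions are essentially (i) an embedding/cycle argument and (ii) a verification via adjacencies.

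For the forward direction, assume $T = G[\bn]$. First I would observe that $G$ must be connected, since the blowup $G[\bn]$ equals the connected tree $T$; and then that choosing a single copy of each $v \in V(G)$ yields an induced subgraph of $T$ isomorphic to $G$, because adjacencies in the blowup match those in $G$. Since this subgraph sits inside the tree $T$, it is acyclic; combined with connectedness, it is a sub-tree, giving (a). For (b), suppose $v \in V(G)$ satisfies $n_v \geq 2$, and pick two distinct copies $w_1, w_2$ of $v$ in $T$. These are non-adjacent and share the same neighborhood in $T$, namely the union of the copy-classes of the neighbors of $v$. If $v$ had two distinct neighbors $u, u' \in V(G)$, then for any copy $x$ of $u$ and $x'$ of $u'$ we would obtain the 4-cycle $w_1 - x - w_2 - x' - w_1$ in $T$, contradicting the tree hypothesis. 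Hence $\deg_G(v) \leq 1$, and because $G$ is connected (with $|V(G)| \geq 2$, the case $|V(G)|=1$ being trivial since then $T$ would have to be a single vertex), $v$ is a leaf of $G$.

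For the converse, assume $G \subseteq T$ is a sub-tree and that every vertex of $V(T) \setminus V(G)$ is a ``copy'' of some leaf $\ell$ of $G$, in the sense of sharing $\ell$'s (singleton) neighbor and being non-adjacent to $\ell$ and to every other such copy. Define $n_v$ to be the number of vertices of $T$ identified as copies of $v$; by hypothesis, $n_v = 1$ unless $v$ is a leaf. A direct check against the definition of graph blowup then confirms $T \cong G[\bn]$: adjacencies between copy-classes of distinct vertices of $G$ are all present or all absent according to the edges of $G$, while copy-classes themselves are independent sets.

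The main obstacle is not a conceptual difficulty but an expository one: formulating the $\Leftarrow$ hypothesis precisely (what it means for a vertex of $T$ outside $G$ to be a ``copy'' of a leaf). Once stated, the verification is immediate. The real content lies in the 4-cycle argument of the forward direction, which is exactly the tree-specialization of the twin-vertex criterion already established in Proposition~\ref{Pblowup}.
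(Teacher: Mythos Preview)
Your proposal is correct and follows essentially the same approach as the paper: the forward direction hinges on the observation that copying a vertex of degree at least two produces a $4$-cycle, and the converse is a routine verification. The paper's proof is terser but makes exactly the same $4$-cycle observation (and notes that this is the tree-specialization of Proposition~\ref{Pblowup}$(1)\Leftrightarrow(2)$), so your write-up simply fills in the details the paper leaves implicit.
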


\begin{proof}
One way is easily shown, and for the ``only if'' part, the key
observation is that if a vertex adjacent to at least two others is
copied, then this creates a $4$-cycle in the blowup. (An equally short
proof is that this result is a special case of Proposition~\ref{Pblowup}
$(1) \Longleftrightarrow (2)$.)
\end{proof}

\begin{proof}[Proof of Theorem~\ref{Ttree-blowup}]
If $T$ has two nodes (so $T = K_2$), then $\mathcal{M}'(T) = 2^V$, which
is a delta-matroid. Thus, suppose henceforth that $|V| \geq 3$.
Since $\mathcal{M}'(T)$ contains all singleton subsets of $V$, the only
nontrivial step is to verify the symmetric exchange axiom. Suppose $A
\neq B \in \mathcal{M}'(T)$ and $x \in A \Delta B$. We divide the
analysis into two cases; in what follows, given a subset $I \subset V$,
we will denote by $T_I$ the subgraph of $T$ induced on the vertex set
$I$, and by $T(I)$ the Steiner tree of $I$ (as above).
\begin{enumerate}
\item $x \in B \setminus A$.

If $A \sqcup \{ x \} \in \mathcal{M}'(T)$, then we simply choose $y=x$.
Otherwise $A \sqcup \{ x \}$ is infeasible whereas $A$ is not (i.e., $A$
is feasible). In particular, $x \not\in T(A)$ since otherwise $T(A \sqcup
\{ x \}) = T(A)$. Now using Corollary~\ref{Ctree-blowup}, this yields a
unique $v \in A$ such that $v,x$ are leaves in the sub-tree $T(A \sqcup
\{ x \})$. Also denote their (unique) common adjacent vertex by $a \in
T(A \sqcup \{ x \})$. 

We now proceed. If $v \not\in B$, then compute using $y = v$:
\[
A \Delta \{ x, y \} = (A \setminus \{ v \}) \sqcup \{ x \}.
\]
Since $v,x$ were copies, removing $v$ and adding $x$ produces an
isomorphic graph: $T_{(A \setminus \{ v \}) \sqcup \{ x \}} \cong T_A$,
which is indeed in $\mathcal{M}'(T)$, as desired.

Otherwise $v \in B$. In this case, $T(B)$ contains $v,x$ and hence $a$ as
well. If now $v,x$ are leaves in $T(B)$ then $B$ is infeasible, which
contradicts the assumption. Hence there exists $y \in B$ which is
separated from $a \in V$ by (exactly) one of $v,x \in B$. Clearly, $y
\not\in A \sqcup \{ x \}$; now note that $A \Delta \{ x, y \} = A \sqcup
\{ x, y \}$, and this belongs to $\mathcal{M}'(T)$ by
Corollary~\ref{Ctree-blowup} (and the uniqueness of the leaves $v,x$ with
a common adjacent vertex).

\item The other case is when $x \in A \setminus B$.

Once again, there are two cases, analogous to the two cases above. First
if $A \setminus \{ x \} \in \mathcal{M}'(T)$, then we simply choose
$y=x$. Otherwise $A \setminus \{ x \}$ is infeasible whereas $A$ is not.
Using Corollary~\ref{Ctree-blowup}, this yields unique $v,w \in A$ such
that:
\begin{itemize}
\item $v,w$ are leaves in $T(A \setminus \{ x \})$,
\item $v,w$ are both adjacent to a (unique) vertex $a \in T(A \setminus
\{ x \})$, and 
\item $v$ separates $x$ from $w$ in $T(A)$.
\end{itemize}

There are now two sub-cases. First if $v,w \in B$, then $T(B)$ contains
$v,w$, hence $a$. As $B \in \mathcal{M}'(T)$, there again exists $y \in
B$ which is separated from $a \in V$, now by (exactly) one of $v,w$. But
then $A \Delta \{ x, y \} = (A \setminus \{ x \}) \sqcup \{ y \}$, and
this is in $\mathcal{M}'(T)$ as in the preceding case, by
Corollary~\ref{Ctree-blowup} (and the uniqueness of the leaves $v,w$ with
a common adjacent vertex).

The final sub-case is when not both $v,w$ lie in $B$. Choose an element
$y \in \{ v, w \} \setminus B \subset A \setminus B$. Now $A \Delta \{ x,
y \} = (A \setminus \{ x \}) \setminus \{ y \}$, and by the uniqueness of
the leaves $v,w$ (with a common adjacent vertex), this set lacks two
leaves at distance $2$ from each other in $T$. Therefore $A \Delta \{ x,
y \} \in \mathcal{M}'(T)$, as desired. \qedhere
\end{enumerate}
\end{proof}

As mentioned above, the delta-matroids $\mathcal{M}_{\D_{P_k}} =
\mathcal{M}'(P_k) = \mathcal{B}(P_k)$ for small values of $k$. It is
natural to seek to know if this result holds for all path graphs, and
perhaps even more generally. It turns out that this is false, and the
situation is more involved even for path graphs $P_k$ with $k \geq 9$:

\begin{proposition}\label{Ppath}
Suppose $k \geq 3$ is an integer. The blowup delta-matroid
$\mathcal{M}_{\D_{P_k}}$ of the graph metric space $P_k$ coincides with
the tree-blowup delta-matroid $\mathcal{B}(P_k)$, if and only if $k \leq
8$.
\end{proposition}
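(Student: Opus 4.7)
The plan is to reduce the proposition to a finite-dimensional problem via the observation that the principal submatrix $(\D_{P_k})_{F \times F}$ for $F = \{i_1 < \cdots < i_m\}$ depends only on the gap sequence $w = (w_1, \ldots, w_{m-1})$ with $w_j = i_{j+1} - i_j$; write $\D[w]$ for this matrix. Since $\D[2] = 2 J_{2 \times 2}$ is rank one and the first and third rows of $\D[1, 1]$ coincide, the two forbidden families in $\mathcal{B}(P_k)$---namely $\{i, i+2\}$ and $\{i, i+1, i+2\}$---are also infeasible in $\mathcal{M}_{\D_{P_k}}$. Hence $\mathcal{M}_{\D_{P_k}} \subseteq \mathcal{B}(P_k)$ for every $k \geq 3$, and the proposition reduces to classifying those gap sequences $w$ with $w_j \in \mathbb{Z}_{>0}$ and $\sum w_j \leq k-1$ for which $\det \D[w] = 0$.

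For the ``only if'' direction I take $F_0 = \{1, 2, \ldots, 9\} \subseteq \{1, \ldots, k\}$ for $k \geq 9$. Its principal submatrix is $\D_{P_9}$, which is singular by the remark already noted in the proof of Proposition~\ref{Pblowup}; I would re-derive this by block-diagonalizing $D_{P_9}$ under the reflection $i \mapsto 10 - i$ and showing that $-2$ is an eigenvalue of the antisymmetric block. Meanwhile, the Steiner tree $T(F_0)$ in $P_k$ is the subpath $1 - 2 - \cdots - 9$, whose only leaves $1, 9$ lie at distance $8$ and share no common neighbor; thus $F_0 \in \mathcal{B}(P_k) \setminus \mathcal{M}_{\D_{P_k}}$.

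For the ``if'' direction ($k \leq 8$) I need to verify $\det \D[w] \neq 0$ for every positive-integer gap sequence $w$ with $\sum w_j \leq 7$ apart from $(2)$ and $(1,1)$. The cases $m = 2, 3$ follow from the closed forms $\det \D[w_1] = (2-w_1)(2+w_1)$ and $\tfrac{1}{2}\det \D[w_1, w_2] = 4 - 2 s^2 + p(s+2)$ with $s = w_1 + w_2$, $p = w_1 w_2$: for $m = 3$ the condition $\det = 0$ is equivalent to $(s+2) \mid (2s^2-4)$, and reducing $2s^2 - 4 \equiv 4 \pmod{s+2}$ shows this is equivalent to $(s+2) \mid 4$, whose only solution with $s \geq 2$ is $s = 2$, forcing $(w_1, w_2) = (1, 1)$. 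For $m \geq 4$ with $\sum w_j \leq 7$ there are fewer than sixty gap sequences up to reversal, and I would verify each one by direct calculation of the $m \times m$ determinant, exploiting the symmetric/antisymmetric decomposition of $\D[w]$ under the reversal involution whenever $w$ is palindromic in order to halve the effective matrix size.

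The main obstacle is the finite case-analysis in the $m \geq 4$ portion of the ``if'' direction: while every individual determinant is elementary, the overall enumeration is lengthy enough that either computer assistance or a tabulated hand-computation is essentially required. The proposition can be read as saying that $\D_{P_9}$ is the smallest ``exotic'' singular modified distance matrix among positive-integer-weighted paths---beyond the two self-evident examples $\D[2]$ and $\D[1, 1]$---and the transition at $k = 9$ in the statement is precisely the reflection of this fact.
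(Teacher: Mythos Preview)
Your proposal is correct and follows essentially the same two-part strategy as the paper: a direct case-check for $k \leq 8$, and the singularity of $\D_{P_9}$ for $k \geq 9$. The paper's own proof is in fact terser than yours---it simply declares the $k \leq 8$ verification to be ``an explicit (and longwinded) inspection'' and then, for $k \geq 9$, computes $\det \D_{P_9} = 0$ and invokes Proposition~\ref{Pcoeff}(2),(3) to propagate this to every nine-term interval in $P_k$.

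Your write-up adds useful structure that the paper omits: the reduction to gap sequences $w$ (so that the check is over compositions of integers $\leq 7$ rather than over all subsets of $\{1,\dots,8\}$), the closed forms for $m=2,3$ with the clean divisibility argument $(s+2)\mid 4$, and the palindromic symmetry trick for $m \geq 4$. These are genuine simplifications of the enumeration, though the underlying idea---brute force on one side, a single singular minor on the other---is identical. One small wording point: for $m=3$ the vanishing condition is $p(s+2) = 2s^2-4$, so $(s+2)\mid(2s^2-4)$ is a priori only necessary, not equivalent; your argument still goes through because you show this necessary condition already forces $s=2$.
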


\begin{proof}
An explicit (and longwinded) inspection shows that
$\mathcal{M}_{\D_{P_k}} = \mathcal{B}(P_k)$ for $3 \leq k \leq 8$.
Another direct computation shows that $\det \D_{P_9} = 0$. Hence by
Proposition~\ref{Pcoeff}(2), the coefficient of $n_1 n_2 \cdots n_9$ in
$p_{P_k}(\bn)$ also vanishes, for all $k \geq 9$. Using
Proposition~\ref{Pcoeff}(3), it follows that
\[
\{ i, i+1, \dots, i+8 \} \not\in \mathcal{M}_{\D_{P_k}}, \qquad \forall k
\geq 9, \ 1 \leq i \leq k-8.
\]
But these sets all lie in $\mathcal{B}(P_k)$, so $\mathcal{B}(P_k)
\supsetneq \mathcal{M}_{\D_{P_k}}$ for $k \geq 9$.
\end{proof}

As also promised above, we next explore the question of extending
Theorem~\ref{Ttree-blowup} from trees to arbitrary graphs. The key
observation is that the equivalence $(1) \Longleftrightarrow (2)$ in
Proposition~\ref{Pblowup} extends Corollary~\ref{Ctree-blowup}. This
suggests how to define a set system in $2^{V(G)}$ for every graph metric
space $G$, which specializes to the delta-matroid $\mathcal{M}'(T)$ when
$G = T$ is a tree.

\begin{definition}
Suppose $G = (V,E)$ is a graph metric space. We say that a subset $I
\subset V$ is
\begin{enumerate}
\item \textit{infeasible of the first kind} if there exist vertices $v_1
\neq v_2 \in I$ and a subset $I \subset \widetilde{I} \subset V$, such
that:
(a)~the induced subgraph $G(\widetilde{I})$ on $\widetilde{I}$ is
connected in $G$, and
(b)~$v_1, v_2$ have the same set of neighbors in $G(\widetilde{I})$.

\item \textit{infeasible of the second kind} if there exist vertices $v_1
\neq v_2 \in I$ and a subset $I \subset \widetilde{I} \subset V$, such
that:
(a)~the induced subgraph $G(\widetilde{I})$ on $\widetilde{I}$ is a
\textit{metric subspace} of $G$ (hence connected), and
(b)~$v_1, v_2$ have the same set of neighbors in $G(\widetilde{I})$.
\end{enumerate}
Also define $\mathcal{M}'_1(G)$ (respectively, $\mathcal{M}'_2(G)$) to
comprise all subsets of $V$ that are not infeasible of the first
(respectively, second) kind.
\end{definition}

For instance, if $G = T$ is a tree, then it is not hard to see that
$\mathcal{M}'_1(T) = \mathcal{M}'_2(T) = \mathcal{M}'(T)$, which was
studied in Theorem~\ref{Ttree-blowup}. Thus, a question that is natural
given that theorem, is \textit{whether or not $\mathcal{M}'_1(G)$ and/or
$\mathcal{M}'_2(G)$ is a delta-matroid for every graph $G$?}

This question is also natural from the viewpoint of the
blowup-polynomial, in that if $I \subset V$ is infeasible of the second
kind, then the coefficient in $p_G(\bn)$ of the monomial $\prod_{i \in I}
n_i$ is zero by Proposition~\ref{Pzeroterms}. Nevertheless, our next
result shows that this question has a negative answer, already for a
graph on seven vertices. Thus, the construction of $\mathcal{M}'(T)$ does
not extend to arbitrary graph metric spaces in either of the above two
ways:

\begin{proposition}
There exists a graph $G$ such that neither $\mathcal{M}'_1(G)$ nor
$\mathcal{M}'_2(G)$ is a delta-matroid.
\end{proposition}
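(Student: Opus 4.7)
The plan is to produce a concrete counterexample graph $G$ (the paper's hint points to an example on seven vertices), together with feasible sets $A, B \in \mathcal{M}'_i(G)$ and an element $x \in A \Delta B$ such that no $y \in A \Delta B$ makes $A \Delta \{x,y\}$ feasible. Because the definition of infeasibility (for either kind) is an \emph{existential} statement over supersets $\widetilde{I} \supset I$, the argument is finitary once a graph is chosen: one only needs to enumerate, for each candidate set $I \subset V(G)$, the connected (resp.\ isometrically embedded) induced subgraphs $G(\widetilde{I})$ containing $I$, and look for two vertices in $I$ with identical neighborhood inside $G(\widetilde{I})$.

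First I would search for a graph $G$ on a small vertex set (starting at $|V|=5$ and working up) containing a ``twin-producing'' configuration that is robust under single-vertex swaps. The guiding intuition is that in a tree, the witness $\widetilde{I}$ for infeasibility is forced (it is essentially the Steiner tree), so locally modifying $I$ by one element lets us destroy or preserve the twin pair at will --- this is precisely the flexibility exploited in the proof of Theorem \ref{Ttree-blowup}. In a denser graph, by contrast, there are many possible witnesses $\widetilde{I}$; so one would hope to engineer $G$ in which every single-element swap $A \Delta \{x,y\}$, with $y$ ranging over $A \Delta B$, still admits \emph{some} superset realising a twin pair. A promising family to try is graphs containing multiple pairs of candidate twins (e.g.\ two disjoint triangles glued through a common neighbor structure, or the $3\times 3$ rook graph with a vertex deleted), since then distinct choices of $y$ activate distinct twin witnesses.

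Having fixed $G$, I would pick $A$ to be a feasible set which is ``one step away'' from an infeasibility, namely such that $A \cup \{x\}$ (or $A \setminus \{x\}$) becomes infeasible for a chosen $x \in A \Delta B$, and choose $B$ so that the feasibility-restoring swaps permitted by the symmetric exchange axiom all lie outside $A \Delta B$. The verification then proceeds in two parts: (i)~confirm that $A$ and $B$ themselves are feasible, by checking that no connected (resp.\ isometric) induced supergraph exhibits a repeated neighborhood among the points of $A$ or of $B$; and (ii)~for each of the finitely many $y \in A \Delta B$, exhibit an explicit superset $\widetilde{I} \supset A \Delta \{x,y\}$ witnessing infeasibility. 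Step~(ii) is the main obstacle: for it to succeed with both $\mathcal{M}'_1(G)$ and $\mathcal{M}'_2(G)$ simultaneously, one needs the witnesses $\widetilde{I}$ to be isometric subgraphs, not merely connected ones, which is the stronger condition.

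The hard part is therefore the design and verification of the single graph $G$ that makes this work for both variants at once; once $G$ is in hand, the remainder of the proof is a table of cases (listing, for each $y$, the witnessing superset $\widetilde{I}$ and the pair $v_1, v_2 \in A \Delta \{x,y\}$ whose neighborhoods in $G(\widetilde{I})$ coincide). I would present the example as a labelled figure, the two sets $A,B$ and element $x$ explicitly, and then a compact case-analysis table enumerating the $y \in A \Delta B$ together with the witnesses. If producing a single graph serving both $\mathcal{M}'_1$ and $\mathcal{M}'_2$ proves too delicate, a legitimate fallback is to give two graphs, one for each kind; the statement of the proposition allows this reading.
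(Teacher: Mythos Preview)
Your proposal is a reasonable search strategy, but it is not a proof: the entire content of the proposition is the concrete example, and you have not supplied one. You describe how you would look for a graph, what properties it should have, and how you would organise the verification once found --- but the graph $G$, the sets $A,B$, the element $x$, and the case table are all left as future work. For a proposition of the form ``there exists $G$ such that \dots'', the example \emph{is} the proof; a plan to search for one is not a substitute.

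For comparison, the paper's proof is extremely short once the right $G$ is written down. It takes a specific seven-vertex graph containing two ``near-twin'' pairs $\{v_1,v_2\}$ and $\{w_1,w_2\}$, together with an extra vertex $x$ that breaks both pairs simultaneously in the full graph. The choices are then $A = V(G)$, $B = \{v_2\}$, and the element $x$; since $A \Delta B = V \setminus \{v_2\}$, every candidate $y$ lies in $A$, so $A \Delta \{x,y\}$ always removes $x$ (and possibly one more vertex $y \neq v_2$). The point is that after deleting $x$ and any single $y \neq v_2$, at least one of the pairs $\{v_1,v_2\}$ or $\{w_1,w_2\}$ survives intact and becomes a genuine twin pair in an induced subgraph that is in fact isometric in $G$. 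This gives infeasibility of the second kind (hence also the first) for every $y$, and one graph handles both set systems at once.

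Two further remarks. First, your fallback of using two separate graphs does not match the proposition as stated: it asserts the existence of a single $G$ for which both $\mathcal{M}'_1(G)$ and $\mathcal{M}'_2(G)$ fail. Second, your instinct to look for graphs with multiple candidate twin pairs is exactly right --- that is precisely the mechanism the paper exploits --- so the missing step is not conceptual but simply the explicit construction and its (brief) verification.
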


\begin{proof}
Let $G$ denote the graph in Figure~\ref{Fig2} on the vertex set $V = \{
u, v_1, v_2, w_1, w_2, x, z \}$.
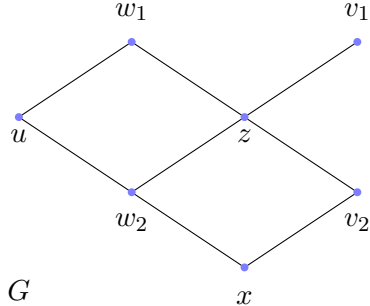
\begin{figure}[ht]
\definecolor{xdxdff}{rgb}{0.49,0.49,1}
\begin{tikzpicture}
\draw (0,1)-- (-1.5,0);
\draw (0,1)-- (1.5,0);
\draw (0,-1)-- (-1.5,0);
\draw (0,-1)-- (1.5,0);
\draw (1.5,0)-- (3,1);
\draw (1.5,0)-- (3,-1);
\draw (1.5,-2)-- (3,-1);
\draw (1.5,-2)-- (0,-1);
\draw[color=black] (-1.5,-2.3) node {$G$};
\fill [color=xdxdff] (-1.5,0) circle (1.5pt);
\draw[color=black] (-1.5,-0.25) node {$u$};
\fill [color=xdxdff] (0,-1) circle (1.5pt);
\draw[color=black] (0,-1.4) node {$w_2$};
\fill [color=xdxdff] (1.5,0) circle (1.5pt);
\draw[color=black] (1.5,-0.25) node {$z$};
\fill [color=xdxdff] (3,1) circle (1.5pt);
\draw[color=black] (3,1.4) node {$v_1$};
\fill [color=xdxdff] (3,-1) circle (1.5pt);
\draw[color=black] (3,-1.4) node {$v_2$};
\fill [color=xdxdff] (1.5,-2) circle (1.5pt);
\draw[color=black] (1.5,-2.4) node {$x$};
\fill [color=xdxdff] (0,1) circle (1.5pt);
\draw[color=black] (0,1.4) node {$w_1$};
\end{tikzpicture}
\caption{A graph on seven vertices}\label{Fig2}
\end{figure}
The definitions show that $A := V$ and $B := \{v_2 \}$ both lie in
$\mathcal{M}'_1(G) \cap \mathcal{M}'_2(G)$, and clearly, $x \in A \Delta
B = A \setminus B$. Moreover, for all $y \in V$, one can verify that $A
\Delta \{ x, y \} = A \setminus \{ x, y \}$ (or $A \setminus \{ x \}$ if
$y=x$) is indeed infeasible of the second kind, hence of the first.
(This uses that in the induced subgraph on $A \setminus \{ x, y \}$,
either $v_1, v_2$ are both present, or $w_1, w_2$ are, and then they are
copies of one another.) Hence the symmetric exchange axiom fails for both
$\mathcal{M}'_1(G)$ and for $\mathcal{M}'_2(G)$, proving the result.
\end{proof}

\section{Metric geometry: Euclidean blowups}

In this section we explore the blowup-polynomial from the viewpoint of
metric geometry, and prove the final outstanding theorem.
Specifically, we understand which blowups $X[\bn]$ of a given finite
metric space $X$ isometrically embed into some Euclidean space $(\R^r, \|
\cdot \|_2)$.

\begin{proof}[Proof of Theorem~\ref{Teuclidean}]
If $k=1$ then the result is immediate, since $X[n_1]$ comprises the
vertices of a Euclidean simplex, for any $n_1 \geq 2$. Thus, we suppose
henceforth that $k \geq 2$.

We first show $(2) \implies (1)$. Let $j_0 \in [1,k] \setminus \{ j \}$
be such that $x_{j_0} = v \in V$ is the closest point in $V$ to $x_j$,
Now a straightforward computation reveals that $x'_j := 2 x_{j_0} - x_j
\in \R^r$ serves as a blowup of $x_j$ in $X[\bn]$. This proves (1).

The proof of the reverse implication $(1) \implies (2)$ is in steps.
We first suppose $n_{x_1}, n_{x_2} \geq 2$ and arrive at a contradiction.
Indeed, now the metric subspace $Y[(2,2)] \subset X[\bn]$ is also
Euclidean, where $Y = ( x_1, x_2 )$. Rescale the metric such that
$d(x_1,x_2) = 1$, and let $y_i$ be the blowup of $x_i$ in $Y[(2,2)]$ for
$i=1,2$. Then the modified Cayley--Menger matrix of $Y[(2,2)]$ with
respect to $(x_1, y_1, x_2, y_2)$ is
\[
A = \begin{pmatrix} 8 & 4 & 4 \\ 4 & 2 & -2 \\ 4 & -2 & 2 \end{pmatrix}.
\]
As $\det A < 0$, it follows by Theorem~\ref{Tschoenberg} that $Y[(2,2)]$,
and hence $X[\bn]$, is not Euclidean.

Next, we suppose $n_{x_2} \geq 3$ and again arrive at a contradiction --
this time by considering the Euclidean subspace $Y[(1,3)] \subset
X[\bn]$, where $Y = (x_1, x_2)$. Rescale the metric such that $d(x_1,
x_2) = 1$, let $y_2, z_2$ denote the blowups of $x_2$ in $Y[(1,3)]$, and
consider the modified Cayley--Menger matrix of $Y[(1,3)]$ with respect to
$(x_1, x_2, y_2, z_2)$:
\[
A = \begin{pmatrix} 2 & -2 & -2 \\ -2 & 2 & -2 \\ -2 & -2 & 2 \end{pmatrix}.
\]
As $\det A < 0$, it follows by Theorem~\ref{Tschoenberg} that $Y[(1,3)]$,
hence $X[\bn]$, is not Euclidean. (Note, these two uses of
Theorem~\ref{Tschoenberg} can also be avoided by using ``visual Euclidean
geometry''. For instance, in the latter case $x_2, y_2, z_2$ are the
vertices of an equilateral triangle with edge-length $2$, and drawing
three unit spheres centered at these vertices reveals there is no common
intersection point $x_1$.)

This shows the existence of the unique index $j \in [1,k]$ such that
$n_{x_j} = 2$ and (2)(a)\ all other $n_{x_i} = 1$. If $k=2$ then the
result is easy to show, so we now suppose that $k \geq 3$. Let $x_{j_0}$
denote any point in $X \setminus \{ x_j \}$ that is closest to $x_j$.
Since $X[\bn]$ is also Euclidean, considering the degenerate (Euclidean)
triangle with vertices $x_{j_0}, x_j$, and the blowup $x'_j$ of $x_j$
shows that these vertices are collinear, and in fact, $x_{j_0} = (x_j +
x'_j)/2$. In turn, this implies that a ``closest point'' $x_{j_0} \in X$
to $x_j$ (and hence the index $j_0$) is unique.

Next, denote by $l \geq 1$ the dimension of the span $V$ of $\{ x_i -
x_{j_0} : i \neq j \}$. Relabel the $x_i$ via: $y_0 := x_{j_0}, y_1 :=
x_j$, and choose any enumeration of the remaining points in $X$ as $y_2,
\dots, y_{k-1}$, such that $y_2 - y_0, \dots, y_{l+1} - y_0$ form a basis
of $V$. Since $X$ is Euclidean, a simple check shows that the modified
Cayley--Menger matrix of $X$ with respect to $(y_0, \dots, y_{k-1})$ is
precisely the Gram matrix
\[
(d(y_0, y_i)^2 + d(y_0, y_{i'})^2 - d(y_i, y_{i'})^2)_{i,i'=1}^{k-1} = (
2 \langle y_{i'} - y_0, y_i - y_0 \rangle )_{i,i'=1}^{k-1}.
\]
Now (2)(b) follows from the claim that if $y_1 - y_0$ is in the span of
$\{ y_i - y_0 : 2 \leq i \leq l+1 \}$, then this matrix uniquely
determines $y_1 \in \R^r$. To show the claim, write $y_1 - y_0 =
\sum_{i=2}^{l+1} c_i (y_i - y_0)$, and take inner products to obtain the
linear system
\[
\sum_{i=2}^{l+1} \langle y_{i'} - y_0, y_i - y_0 \rangle c_i = \langle
y_{i'} - y_0, y_1 - y_0 \rangle, \qquad 2 \leq i' \leq l+1.
\]
But the left-hand side is the product of the Gram matrix of $\{ y_{i'} -
y_0 : 2 \leq i' \leq l+1 \}$ against the vector $(c_2, \dots,
c_{l+1})^T$. As the vectors $y_{i'} - y_0$ are independent, their Gram
matrix is invertible, so this system determines all $c_i$ uniquely. In
particular, if $y_1 = x_j$ is in the affine hull of $X \setminus \{ x_j
\}$, then $X[\bn]$ cannot be Euclidean since $n_{x_j} > 1$. This proves
(2)(b).

Finally, since $k \geq 3$, we have $l \geq 1$. Now the definition of the
blowup $X[\bn]$ implies:
\[
\| y_i - y_1 \|_2^2 = \| y_i - (2 y_0 - y_1) \|_2^2, \qquad 2 \leq i \leq
l+1
\]
or in other words,
\[
\| (y_i - y_0) - (y_1 - y_0) \|_2^2 = \| (y_i - y_0) + (y_1 - y_0)
\|_2^2.
\]
Simplifying this equality yields that $y_1 - y_0$ is orthogonal to $y_2 -
y_0, \dots, y_{l+1} - y_0$. This implies $y_1 - y_0$ is orthogonal to all
of $V$ -- which was the assertion (2)(c).
\end{proof}

\subsection{Real-closed analogues}

For completeness, we now provide analogues of some of the results proved
above, over arbitrary real-closed fields. As this subsection is not used
anywhere else in the paper, we will be brief, and also assume familiarity
with real-closed fields; we refer the reader to the opening chapter
of~\cite{BCR} for this.

In this part, we suppose $\K$ is a real-closed field, where we denote the
non-negative elements in $\K$ by: $r \geq 0$. Also let $\overline{\K} =
\K[\sqrt{-1}]$ denote an algebraic closure of $\K$, where we fix a choice
of ``imaginary'' square root $i = \sqrt{-1}$ in $\overline{\K}$. Then
several ``real'' notions can be defined over $\K, \overline{\K}$:
\begin{enumerate}
\item A symmetric matrix $A \in \K^{k \times k}$ is said to be
\emph{positive semidefinite} (respectively, \emph{positive definite}) if
$x^T A x \geq 0$ (respectively, $x^T A x > 0$) for all nonzero vectors $x
\in \K^n$.

\item A matrix $A \in \K^{k \times k}$ is \emph{orthogonal} if $A A^T =
\Id_k$.

\item Given $z = x + iy \in \overline{\K}$ with $x,y \in \K$, we define
$\Re(z) := x$ and $\Im(z) := y$.

\item We say that a multivariable polynomial $p \in \overline{\K}[z_1,
\dots, z_k]$ is \emph{stable} if $p(z_1, \dots, z_k)$ $\neq 0$ whenever
$\Im(z_j) > 0\ \forall j$.
\end{enumerate}

Now (an analogue of) the spectral theorem holds for symmetric matrices $A
= A^T \in \K^{k \times k}$. Moreover, every such matrix $A$ is positive
semidefinite if and only if it has non-negative eigenvalues in $\K$ --
and if so, then it has a positive semidefinite square root $\sqrt{A}$.
One also has:

\begin{proposition}\label{Prealclosed}
Fix an integer $k \geq 1$.
\begin{enumerate}
\item Suppose $A_1, \dots, A_m, B \in \K^{k \times k}$ are symmetric
matrices, with all $A_j$ positive semidefinite. Then the polynomial
\[
p(z_1, \dots, z_m) := \det \left( B + \sum_{j=1}^m z_j A_j \right)
\]
is either stable or identically zero.

\item Suppose $p \in \K[z_1, \dots, z_m]$ is homogeneous of degree $k$.
If $p$ is stable, then $p$ is Lorentzian (equivalently,
strongly/completely log-concave, whenever $\log(\cdot)$ is defined over
$\K$).
\end{enumerate}
\end{proposition}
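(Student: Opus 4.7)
My plan is to prove part~(1) by a direct kernel-chasing argument that works uniformly over any real-closed field, and to prove part~(2) by the Tarski--Seidenberg transfer principle applied to the known result over $\R$. Both parts rely only on facts that transfer cleanly to $\K$: the spectral theorem for symmetric matrices, the existence of a positive semidefinite square root, and the semi-algebraic nature of the hypotheses and conclusions.

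For part~(1), I would fix $z_j = x_j + i y_j$ with $x_j, y_j \in \K$ and $y_j > 0$, and decompose
\[
M := B + \sum_{j=1}^m z_j A_j = H + iK, \qquad H := B + \sum_j x_j A_j, \qquad K := \sum_j y_j A_j,
\]
so $H$ is symmetric over $\K$ and $K$ is positive semidefinite over $\K$. Assume $(H+iK)(u+iw) = 0$ for some $u, w \in \K^k$; separating real and imaginary parts gives $Hu = Kw$ and $Hw = -Ku$. Using the symmetry of $H$ and $K$, one computes $u^T K u = -u^T H w$ and $w^T K w = w^T H u = u^T H w$, hence $u^T K u + w^T K w = 0$. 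Positive semidefiniteness of $K$ then forces $u^T K u = w^T K w = 0$, and passing through the PSD square root $\sqrt{K}$ yields $Ku = Kw = 0$. Since $y_j > 0$ and each $A_j \succeq 0$, this further implies $A_j u = A_j w = 0$ for every $j$, and then $Hu = 0$ reduces to $Bu = 0$ (similarly $Bw = 0$). Thus both $u$ and $w$ lie in the $z$-independent subspace $W := \ker B \cap \bigcap_j \ker A_j$. If $W = 0$ the null space of $M$ is trivial, giving stability; if $W \neq 0$ then $M$ vanishes on $W$ for every $z \in \overline{\K}^m$, so $p$ is identically zero. This gives a clean dichotomy with no appeal to continuity.

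For part~(2), the claim ``every stable homogeneous polynomial of degree $k$ in $m$ variables is Lorentzian'' is, for each fixed $k, m$, a first-order sentence $\varphi_{k,m}$ in the language of ordered fields once a polynomial is identified with its coefficient tuple. Stability of $p$ is equivalent to real-rootedness of the univariate restriction $t \mapsto p(\mu + t\lambda)$ for every $\mu \in \K^m$ and $\lambda \in \K^m$ with $\lambda_i > 0$; this is a universal semi-algebraic condition on the coefficients, for instance via non-negativity of suitable subresultants (or equivalently, non-negativity of the Hermite form). Being Lorentzian is also a semi-algebraic condition on the coefficients: non-negativity of coefficients together with, for each iterated derivative $g = \partial_{x_{j_1}} \cdots \partial_{x_{j_{d-2}}} p$, the Hessian $\mathcal{H}_g$ being nonsingular with exactly one positive eigenvalue (a sign condition on its characteristic polynomial via Descartes or on its principal minors via Sylvester). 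Since $\varphi_{k,m}$ holds over $\R$ by \cite[Theorem~2.30]{BH} (together with the equivalence of stability and Lorentzian-ness in the homogeneous case established in \cite{BH,COSW}), and the theory of real-closed fields is complete, $\varphi_{k,m}$ transfers to $\K$. The parenthetical in the statement reflects that the definitions of strong/complete log-concavity require $\log$ on $\K_{>0}$, which may not be available; the Lorentzian formulation avoids this and is the one I would use.

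The main obstacle is the careful first-order re-encoding in part~(2): both ``stable'' (defined via the upper half-plane of $\overline{\K}$) and ``Lorentzian'' (defined via eigenvalues of Hessians) must be rephrased as genuinely semi-algebraic conditions on coefficient tuples so that the transfer principle applies. Once this translation is in place the conclusion is automatic, but verifying that every ingredient in the Brändén--Huh definition is first-order (and that the parameter families quantified there can be reduced to finite-dimensional semi-algebraic data) is the technical heart of the argument.
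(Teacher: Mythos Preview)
Your proposal is correct. The paper's own proof is extremely terse: it simply observes that both assertions hold over $\K = \R$ by \cite{BB1} and \cite{BH} respectively, and then invokes Tarski's transfer principle for both parts, since all hypotheses and conclusions are expressible in the first-order language of ordered fields. Your treatment of part~(2) is exactly this, fleshed out with the (correct and useful) observation that stability can be rewritten as real-rootedness of all univariate restrictions $t \mapsto p(\mu + t\lambda)$ with $\lambda \in \K_{>0}^m$, and that the Lorentzian condition on Hessians is a sign condition on characteristic-polynomial coefficients --- both semi-algebraic, so the transfer is legitimate.

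Where you genuinely diverge from the paper is in part~(1): rather than transferring the Borcea--Br\"and\'en result from $\R$, you reproduce (essentially) their original kernel argument directly over $\K$. This works because every ingredient --- symmetry of $H$, positive semidefiniteness of $K = \sum_j y_j A_j$, existence of $\sqrt{K}$, and the implication $u^T K u = 0 \Rightarrow Ku = 0$ --- is available over any real-closed field via the spectral theorem. The payoff is that your argument is self-contained and makes the dichotomy (stable versus identically zero) completely explicit through the subspace $W = \ker B \cap \bigcap_j \ker A_j$; the paper's approach is shorter but black-boxes the mechanism. Either route is valid here.
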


The two parts of this proposition were proved over $\K = \R$ in
\cite{BB1,BH}, respectively. Thus, they hold over an arbitrary real
closed field $\K$ by Tarski's principle, since all of these notions can
be expressed in the first-order language of ordered fields.
Also note that the equivalence in the second part indeed makes sense over
some real-closed fields, e.g.\ $\K = \R$, or $\K$ the field of convergent
(real) Puiseux series with rational powers, where it was proved in
\cite[Theorem 3.19]{BH}. The notions of Lorentzian and
strongly/completely log-concave polynomials over the latter field can be
found in \cite{BH} (see pp.~861).

\begin{remark}
Schoenberg's Euclidean embeddability result (Theorem~\ref{Tschoenberg})
-- stated and used above -- turns out to have an alternate
characterization in a specific real-closed field: the convergent
generalized Puiseux series $\R \{ t \}$ with real powers. Recall, the
elements of $\R \{ t \}$ are series
\[
\sum_{n=0}^\infty c_n t^{\alpha_n}, \qquad c_0, c_1, \ldots \in \R,
\]
satisfying:
(a)~the exponents $\alpha_0 < \alpha_1 < \cdots$ are real;
(b)~$\{ -\alpha_n : n \geq 0, \ c_n \neq 0 \}$ is well-ordered; and
(c)~$\sum_n c_n t^{\alpha_n}$ is convergent on a punctured open disk
around the origin. Such a series is defined to be positive if its leading
coefficient is positive. It is known (see e.g.\ \cite[Section
1.5]{Speyer}) that this field is real-closed; moreover, its algebraic
closure is the degree-$2$ extension $\mathbb{C} \{ t \}$, where 
real coefficients in the definition above are replaced by complex ones.
Now we claim that the following holds.
\end{remark}

\begin{theorem}
A finite metric space $X = \{ x_0, x_1, \dots, x_k \}$ isometrically
embeds inside Hilbert space $\ell^2$ (over $\R$) if and only if the
matrix $E_X := (t^{d(x_i, x_j)^2})_{i,j=0}^k$ is positive semidefinite in
$\R \{ t \}$.
\end{theorem}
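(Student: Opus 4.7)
The plan is to reduce this ``tropical'' version of Schoenberg's theorem to the classical Schoenberg theorem connecting $\ell^2$-embeddability of a finite metric space to positive semidefiniteness of Gaussian kernels, and then to transfer the resulting real condition into the real-closed field $\R\{t\}$ via Tarski's principle. The starting point is Schoenberg's original 1938 result (a close companion of Theorem~\ref{Tschoenberg}): a finite metric space $(X,d)$ embeds isometrically in $\ell^2$ if and only if $d^2$ is conditionally negative definite, equivalently the Gaussian kernel matrix $(e^{-s\,d(x_i,x_j)^2})_{i,j}$ is PSD as a real matrix for every $s>0$. The substitution $t_0=e^{-s}$ bijects $s\in(0,\infty)$ with $t_0\in(0,1)$ and carries this kernel to the real evaluation $E_X(t_0)$; so the classical theorem reads: $X\hookrightarrow\ell^2$ if and only if $E_X(t_0)$ is PSD (as a real matrix) for every real $t_0\in(0,1)$.

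For the forward direction, given an isometric embedding $x_i\mapsto v_i\in\ell^2$ and any $t_0\in(0,1)$, I would use the factorization $E_X(t_0)=D(t_0)\,K(t_0)\,D(t_0)$, where $D(t_0)=\diag(t_0^{\|v_i\|^2/2})$ is positive diagonal and $K(t_0)_{ij}=t_0^{-2\langle v_i,v_j\rangle}=e^{s\langle v_i,v_j\rangle}$ with $s=-\log t_0>0$. Expanding this scalar exponential in Taylor series and recognizing $(\langle v_i,v_j\rangle^n)_{i,j}$ as the Gram matrix of the tensor powers $v_i^{\otimes n}$, the matrix $K(t_0)$ is a non-negative combination of PSD matrices, hence PSD; conjugation by $D(t_0)$ preserves this, so $E_X(t_0)\succeq 0$ for every such $t_0$, and the PSD-ness of $E_X$ in $\R\{t\}$ follows by Tarski transfer (see below).

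For the reverse direction, I would invoke the real-closed-field Sylvester criterion together with Tarski's transfer principle (in the spirit of Proposition~\ref{Prealclosed}). The statement ``for every $t_0$ with $0<t_0<1$, every principal minor of $E_X(t_0)$ is $\geq 0$'' is a universal first-order sentence in the language of ordered fields, so it has the same truth value over $\R$ and over $\R\{t\}$. The hypothesis that $E_X$ is PSD in $\R\{t\}$ witnesses this sentence over $\R\{t\}$ (via the distinguished element $t\in(0,1)_{\R\{t\}}$), so by transfer the same pointwise condition holds over $\R$; Schoenberg's classical theorem then yields the embedding $X\hookrightarrow\ell^2$.

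The main obstacle is pinning down the right first-order interpretation of ``$E_X$ is PSD in $\R\{t\}$'': under the naive infinitesimal ordering on $\R\{t\}$ the matrix $E_X$ is a small perturbation of the identity (since $t$ is positive infinitesimal and all off-diagonal exponents $d(x_i,x_j)^2$ are strictly positive), which on its own would render the condition vacuous. The substantive content of the theorem therefore lives in reading ``PSD in $\R\{t\}$'' as the universal statement over the interval $(0,1)$ of $\R\{t\}$, and in closing the Tarski-transfer loop through the classical Schoenberg PSD-for-all-$s>0$ characterization via the substitution $t=e^{-s}$.
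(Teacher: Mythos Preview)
Your skeleton matches the paper's exactly: both invoke Schoenberg's classical equivalence ($X\hookrightarrow\ell^2$ $\Leftrightarrow$ $-d^2$ conditionally PSD $\Leftrightarrow$ $(e^{-\lambda d_{ij}^2})$ PSD for all $\lambda\geq0$), substitute $q=e^{-\lambda}\in(0,1)$, and then pass to $\R\{t\}$. The paper handles all three steps by citation (\cite{Schoenberg38b} twice, then \cite[Section~1.5]{Speyer} for the last); your factorization $E_X(t_0)=D\,K\,D$ with $K$ a Taylor sum of Gram matrices is a pleasant unpacking of Schoenberg's forward direction, not a different route.

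Your reverse direction, however, does not go through. The hypothesis ``$E_X$ is PSD in $\R\{t\}$'' says that finitely many specific elements of $\R\{t\}$ (the principal minors of the fixed matrix $E_X$) are $\geq 0$; it is \emph{not} the universal sentence ``$\forall t_0\in(0,1)$, $E_X(t_0)$ is PSD'', and the single witness $t_0=t$ does not deliver the universal statement over $\R\{t\}$ that you would need before transferring back to $\R$. Separately, $t_0\mapsto t_0^{\alpha}$ for irrational $\alpha$ is not definable in the first-order language of ordered fields, so the sentence you propose to transfer is not even first-order unless all $d_{ij}^2\in\mathbb{Q}$.

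You have also put your finger on a real difficulty. Under the reading in which $t$ is a positive infinitesimal (``leading'' $=$ lowest exponent, consistent with convergence near the origin), every principal minor of $E_X=\Id+(\text{infinitesimal off-diagonal})$ has constant term $1$ by the Leibniz expansion, so $E_X$ is positive \emph{definite} in $\R\{t\}$ for \emph{every} finite metric space---making the stated ``if'' direction vacuous. Your proposed remedy, reinterpreting ``PSD in $\R\{t\}$'' as the universal statement over $(0,1)_{\R\{t\}}$, would prove a different theorem, not the one stated. The paper does not address this point beyond the citation to Speyer; whatever that reference supplies (linking positivity in $\R\{t\}$, under its specific ordering convention, to real positivity on an interval), it is not captured by the bare Tarski-transfer you propose, and your argument does not furnish a substitute.
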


\begin{proof}
This follows from a chain of equivalences:
$X$ is Euclidean-embeddable if and only if (by~\cite{Schoenberg38b},
via Theorem~\ref{Tschoenberg}) the matrix $(-d(x_i,x_j)^2)_{i,j=0}^k$ is
conditionally positive semidefinite,
if and only if (by~\cite{Schoenberg38b}) $(\exp (-\lambda
d(x_i,x_j)^2))_{i,j=0}^k$ is positive semidefinite for all $\lambda \geq
0$,
if and only if (replacing $e^{-\lambda} \leadsto q \in (0,1)$ and via
\cite[Section~1.5]{Speyer}) $E_X$ is positive semidefinite in $\R \{ t
\}$.
\end{proof}

We conclude this part by formulating the promised tropical version of
some of our results above. As the notion of a $\K_{\geq 0}$-valued metric
(can be easily defined, but) is not very common in the literature, we
formulate the next result in greater generality.

\begin{theorem}
Suppose $\K$ is a real-closed field, and $\Delta, M \in \K^{k \times k}$
are symmetric matrices, with $\Delta$ a positive definite diagonal
matrix.
\begin{enumerate}
\item The multi-affine polynomials
\[
p_\pm(z_1, \dots, z_k) := \det (\pm \Delta + \Delta_{\bf z} M), \qquad
{\bf z} \in \overline{\K}^k
\]
are stable, with coefficients in $\K$.

\item Define the homogenization $\widetilde{p}(z_0, z_1, \dots, z_k) :=
p(z_0 \Delta + \Delta_{\bf z} M)$. The following are equivalent:
\begin{enumerate}
\item $\widetilde{p}$ is stable.

\item $\widetilde{p}$ is Lorentzian (equivalently, strongly/completely
log-concave, whenever $\log(\cdot)$ is defined over $\K$).

\item All coefficients of $\widetilde{p}$ lie in $\K_{\geq 0}$.

\item $p(1,\dots,1) > 0$, and the polynomial $(z_1, \dots, z_k) \mapsto
\displaystyle \frac{p(z_1, \dots, z_k)}{p(1,\dots,1)}$ is stable and has
non-negative coefficients that sum to $1$.

\item The matrix $M$ is positive semidefinite.
\end{enumerate}
\end{enumerate}
\end{theorem}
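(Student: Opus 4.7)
The plan is to mirror the proofs of Theorems~\ref{Tstable} and~\ref{Tlorentz} from over $\R$ to over $\K$, replacing the Borcea--Br\"and\'en determinantal stability result and the Br\"and\'en--Huh equivalence of stability with the Lorentzian property for homogeneous polynomials by their real-closed analogues in Proposition~\ref{Prealclosed}. For part (1), the coefficients of $p_\pm$ lie in $\K$ by multilinearity of $\det$ in the rows of $\pm\Delta + \Delta_{\bf z} M$. Setting $C := \Delta^{-1/2} M \Delta^{-1/2}$ (well-defined because $\Delta$ is positive definite diagonal with a diagonal square root in $\K$) and taking a Schur complement exactly as in the proof of Theorem~\ref{Tstable} yields, on the Zariski-open locus $\prod_j z_j \neq 0$,
\[
p_-({\bf z}) = \det\Delta \cdot \prod_{j=1}^k z_j \cdot \det\!\Bigl(C + \sum_{j=1}^k (-z_j^{-1}) E_{jj}\Bigr).
\]
Proposition~\ref{Prealclosed}(1) makes $\det(C + \sum_j z_j E_{jj})$ stable, and the purely algebraic ``inversion preserves stability'' step ($z_j \mapsto -z_j^{-1}$, followed by multiplication by $\prod_j z_j$) transfers verbatim to $\K$; hence $p_-$ is stable. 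Finally $p_+({\bf z}) = (-1)^k p_-(-{\bf z})$, and ${\bf z} \mapsto -{\bf z}$ preserves stability for a $\K$-coefficient polynomial via the conjugation $z \mapsto -\bar z$ on $\overline{\K}$, which fixes the upper half-plane.

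For part (2), I would establish the cyclic chain $(a) \Rightarrow (b) \Rightarrow (c) \Rightarrow (e) \Rightarrow (a)$ together with $(c) \Leftrightarrow (d)$. The key analytic step $(e) \Rightarrow (a)$ follows the proof of Theorem~\ref{Tlorentz}: using the symmetric positive semidefinite square root $\sqrt{M} \in \K^{k \times k}$ (which exists by the spectral theorem over $\K$), a Schur-complement identity recasts the homogenization as
\[
\widetilde p(z_0, {\bf z}) = \det\Delta \cdot \det\!\Bigl(z_0\,\Id_k + \sum_{j=1}^k z_j\,A_j\Bigr), \qquad A_j := \sqrt{M}\,\Delta^{-1/2} E_{jj}\,\Delta^{-1/2}\sqrt{M},
\]
with each $A_j$ symmetric positive semidefinite; Proposition~\ref{Prealclosed}(1) then yields stability. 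The implication $(a) \Rightarrow (b)$ is Proposition~\ref{Prealclosed}(2) (together with the equivalence of Lorentzian with strongly/completely log-concave where applicable), and $(b) \Rightarrow (c)$ is immediate from the definition of Lorentzian. For $(c) \Rightarrow (e)$, row-by-row multilinear expansion of the determinant gives
\[
\widetilde p(z_0, {\bf z}) = \sum_{S \subset \{1,\dots,k\}} \Bigl(\prod_{i \notin S} \Delta_{ii}\Bigr) \det(M_{S \times S}) \cdot z_0^{k-|S|} \prod_{j \in S} z_j,
\]
so non-negativity of every coefficient (together with $\Delta$ being positive definite diagonal) forces every principal minor of the symmetric matrix $M$ to be non-negative, whence $M$ is positive semidefinite. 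Finally, $(c) \Leftrightarrow (d)$ follows from the same expansion: $(c)$ implies $p(1,\dots,1) = \widetilde p(1,1,\dots,1) \geq \widetilde p(1,0,\dots,0) = \det\Delta > 0$, the normalized polynomial then has non-negative coefficients summing to $1$, and its stability follows from $(a)$ by specializing $z_0 = 1 \in \K$.

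The main obstacle is the analytic step $(e) \Rightarrow (a)$: arranging $\widetilde p$ as a ``positive'' determinantal combination so that Proposition~\ref{Prealclosed}(1) applies. Everything else is either pure algebra (Schur complements, multilinearity of $\det$, spectral theorem for symmetric matrices over $\K$, and the equivalence of non-negative principal minors with positive semidefiniteness), which transfers verbatim from $\R$ to $\K$, or a direct appeal to Proposition~\ref{Prealclosed}. A secondary subtlety is transferring auxiliary stability facts, such as ``specialization of a real variable to a real number preserves stability'' and ``$\K$-coefficient stability is symmetric under the two half-planes,'' from $\R$ to $\K$; these can be handled via Tarski's principle exactly as suggested in the sentence immediately following Proposition~\ref{Prealclosed}.
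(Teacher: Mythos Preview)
Your proposal is correct and takes essentially the same approach as the paper: it mirrors the proofs of Theorems~\ref{Tstable} and~\ref{Tlorentz}, replacing the Borcea--Br\"and\'en and Br\"and\'en--Huh inputs by their real-closed analogues from Proposition~\ref{Prealclosed} and invoking Tarski's principle for the auxiliary stability-preservation facts. The only cosmetic difference is that for $(e)\Rightarrow(a)$ you use $A_j = \sqrt{M}\,\Delta^{-1/2}E_{jj}\Delta^{-1/2}\sqrt{M}$ instead of the paper's $\sqrt{\mathcal{C}}E_{jj}\sqrt{\mathcal{C}}$ with $\mathcal{C} = \Delta^{-1/2}M\Delta^{-1/2}$, but a Sylvester-type identity shows these yield the same determinant.
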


One can prove this result from the same result over $\K = \R$, via
Tarski's principle. Alternately, the case of $\K = \R$ was essentially
proved in Theorem~\ref{Tlorentz}; the slightly more general versions here
(for $\K = \R$, with $M$ instead of $\D_G$) require only minimal
modifications to the earlier proofs. These proofs (for $\K = \R$) go
through over arbitrary real-closed $\K$ with minimal further
modifications, given Proposition~\ref{Prealclosed}.

We end the paper with some observations. The results in this work reveal
several novel invariants associated to all finite connected unweighted
graphs -- more generally, to all finite metric spaces $X$:
\begin{enumerate}
\item The (real-stable) blowup-polynomials $p_X(\bn)$ and $u_X(n)$.

\item The degrees of $p_X, u_X$ -- notice by Proposition~\ref{Ppath} that
the degrees can be strictly less than the number of points in $X$, even
when $X$ is not a blowup of a smaller graph.

\item The largest root of $u_X(n)$ (which is always positive). By
Corollary~\ref{Cdistancespectrum}, for $X = G$ a graph this equals
$\frac{2}{2+\lambda}$, where $\lambda$ is the smallest eigenvalue of
$D_G$ above $-2$.

\item The blowup delta-matroid $\mathcal{M}_{\D_X}$; and for $X$ an
unweighted tree, the delta-matroid $\mathcal{M}'(X)$ which is
combinatorial rather than matrix-theoretic.
\end{enumerate}

\noindent It would be interesting and desirable to explore if -- say for
$X = G$ a graph metric space -- these invariants can be related to
``traditional'' combinatorial graph invariants.

\appendix

\section{From the matrix $\mathcal{C}_X$ to the univariate
blowup-polynomial}

In this section, we show a few peripheral but related
results for completeness. As the proofs of Theorems~\ref{Tstable}
and~\ref{Tlorentz} reveal, the matrix $\mathcal{C}_G =
(-\Delta_{\ba_G})^{-1/2} \D_G (-\Delta_{\ba_G})^{-1/2}$ plays an
important role in determining the real-stability and Lorentzian nature of
$p_G(\cdot)$ and $\widetilde{p}_G(\cdot)$, respectively. We thus take a
closer look at $\mathcal{C}_G$ in its own right. Here, we will work over
an arbitrary finite metric space $X$.

\begin{proposition}\label{PCmatrix}
Given a finite metric space $X = \{ x_1, \dots, x_k \}$ with $k \geq 2$,
the (real) matrix $\mathcal{C}_X$ has at least two positive eigenvalues,
and this bound is sharp. More strongly, for any $k \geq 3$, there exists
a finite metric space $Y = \{ y_1, \dots, y_k \}$ for which
$\mathcal{C}_Y$ has exactly two positive eigenvalues.
\end{proposition}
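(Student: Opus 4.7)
The plan is to establish the lower bound by exhibiting a positive-definite $2 \times 2$ principal submatrix of $\mathcal{C}_X$ and then invoking Cauchy interlacing; and to establish sharpness via Lemma~\ref{Lwellbehaved}(3), which I will use to show that blowups preserve the positive-eigenvalue count of $\mathcal{C}_X$. This reduces the sharpness claim for every $k \geq 2$ to the trivial case $k = 2$.

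For the lower bound, I will set $m := \min_{x \in X} d(x, X \setminus \{x\})$, let $x_a \in X$ attain this minimum, and pick $x_b \in X \setminus \{x_a\}$ with $d(x_a, x_b) = m$. The inequality $d(x_b, X \setminus \{x_b\}) \leq d(x_b, x_a) = m$, combined with the minimality of $m$, forces $d(x_b, X \setminus \{x_b\}) = m$ as well. Formula~\eqref{Ecmatrix} then yields $(\mathcal{C}_X)_{ab} = m/(2\sqrt{m \cdot m}) = 1/2$, so the $\{a,b\} \times \{a,b\}$ principal submatrix of $\mathcal{C}_X$ has $1$'s on the diagonal and $1/2$'s off the diagonal, and hence has eigenvalues $3/2$ and $1/2$, both positive. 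Cauchy interlacing then delivers at least two positive eigenvalues for $\mathcal{C}_X$.

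For sharpness, the case $k=2$ is handled directly by the $2\times 2$ computation above. For $k \geq 3$, I will take $X = \{x_1, x_2\}$ with $d(x_1, x_2) = 1$, and let $Y := X[\bn]$ for any tuple $\bn = (n_1, n_2)$ with $n_1 + n_2 = k$ and $n_1, n_2 \geq 1$; such a $Y$ is isometric to the complete bipartite graph $K_{n_1, n_2}$. By Lemma~\ref{Lwellbehaved}(3), $\mathcal{C}_Y = W \mathcal{C}_X W^T$, where $W \in \R^{k \times 2}$ is the block-column duplication matrix (first column supported on the $n_1$ copies of $x_1$, second on the $n_2$ copies of $x_2$). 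Since $W$ has full column rank $2$, the identity $v^T \mathcal{C}_Y v = (W^T v)^T \mathcal{C}_X (W^T v)$ shows that $\mathcal{C}_Y$ vanishes on $\ker(W^T)$, a subspace of dimension $k-2$; while on the $2$-dimensional range of $W$, parametrizing $v = Wu$ gives $v^T \mathcal{C}_Y v = u^T (W^T W) \mathcal{C}_X (W^T W) u$, which has the same inertia as $\mathcal{C}_X$ because $W^T W = \mathrm{diag}(n_1, n_2)$ is positive definite. Noting that $\mathrm{range}(W)$ and $\ker(W^T)$ are mutually $\mathcal{C}_Y$-orthogonal (so the inertias add), I conclude that $\mathcal{C}_Y$ has exactly two positive eigenvalues and $k-2$ zero eigenvalues.

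The only mild obstacles are the triangle-inequality identification of the pair $\{a, b\}$ in the lower bound, and the inertia bookkeeping under blowups; each is essentially a one-line argument, so no serious obstruction is anticipated.
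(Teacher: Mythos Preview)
Your lower-bound argument is essentially identical to the paper's: both locate a pair of points realizing the global minimum distance, observe that the corresponding $2\times 2$ principal block of $\mathcal{C}_X$ is $\begin{pmatrix} 1 & 1/2 \\ 1/2 & 1 \end{pmatrix}$, and invoke Cauchy interlacing. Your phrasing via $m = \min_x d(x, X\setminus\{x\})$ makes the verification that both diagonal normalizers equal $m$ more explicit than the paper's one-line claim, but the content is the same.

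Your sharpness construction, however, is genuinely different and cleaner. The paper first builds a $3$-point isosceles example $X$ (edge-lengths $1,a,a$ with $a>3$) for which $\det \mathcal{C}_X < 0$, giving two positive and one \emph{negative} eigenvalue; it then blows up one vertex to reach size $k$ and argues via Kronecker-product spectra that the negative eigenvalue survives while a $(k-3)$-dimensional kernel appears. You instead blow up the $2$-point space directly, obtaining $\mathcal{C}_Y = W\mathcal{C}_X W^T$ with $\mathcal{C}_X$ positive definite of rank $2$; hence $\mathcal{C}_Y$ is positive semidefinite of rank exactly $2$, giving two positive and $k-2$ zero eigenvalues. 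This avoids the $3$-point computation entirely and needs only the trivial observation that $\mathrm{rank}(W\mathcal{C}_X W^T) = \mathrm{rank}(W) = 2$ when $\mathcal{C}_X$ is invertible---your more elaborate inertia bookkeeping via $\mathrm{range}(W)\oplus\ker(W^T)$ is correct but more than is needed here. The paper's route does buy one extra fact not asked for in the proposition: that $\mathcal{C}_Y$ can fail to be positive semidefinite. But for the statement as written, your argument is shorter and fully sufficient.
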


\begin{proof}
Suppose $i \neq j$ are such that $d(x_i, x_j)$ is minimal among the
off-diagonal entries of the distance matrix $D_X$ (or $\D_X$).
Using~\eqref{Ecmatrix}, the corresponding principal $2 \times 2$
submatrix of $\mathcal{C}_X$ is $\begin{pmatrix} 1 & 1/2 \\ 1/2 & 1
\end{pmatrix}$. This is positive definite, hence has two positive
eigenvalues. But then so does $\mathcal{C}_X$, by the Cauchy interlacing
theorem.

We now show that this bound is tight, first for $k=3$. Consider a
Euclidean isosceles triangle with vertices $x_1, x_2, x_3$ and
edge-lengths $1,a,a$ for any fixed scalar $a>3$. Thus, $X = \{ x_1, x_2,
x_3 \}$; now a suitable relabeling of the $x_i$ gives:
\[
\D_X = \begin{pmatrix}
2 & 1 & a \\
1 & 2 & a \\
a & a & 2a
\end{pmatrix}.
\]
Now $\det \D_X = 2a(3-a) < 0$, so $\det \mathcal{C}_X = (3-a)/4 < 0$.
As $\mathcal{C}_X$ has two positive eigenvalues, the third eigenvalue
must be negative.

Finally, for $k > 3$, we continue to use $X = \{ x_1, x_2, x_3 \}$ as in
the preceding paragraph, and consider the blowup $Y := X[(1,1,k-2)]$. By
Lemma~\ref{Lwellbehaved}(3), $\D_Y$ is a block $3 \times 3$ matrix with
the $(3,3)$ block of size $(k-2) \times (k-2)$. Let $V \subset \R^k$
denote the subspace of vectors whose first two coordinates are zero and
the others add up to zero; then $V \subset \ker \D_Y$ and $\dim V = k-3$.
On the ortho-complement $V^\perp \subset \R^k$ (with basis $e_1, e_2$,
and $e_3 + \cdots + e_k$), $\D_Y$ acts as the matrix $\D_X \cdot \diag
(1, 1, k-2)$. Thus, the remaining three eigenvalues of $\D_Y$ are those
of $\D_X \cdot \diag (1, 1, k-2)$, or equivalently, of its conjugate
\[
C := \diag(1,1,\sqrt{k-2}) \cdot \D_X \cdot \diag(1,1,\sqrt{k-2}),
\]
which is real symmetric. Moreover, $\D_X$ has a negative eigenvalue,
hence is not positive semidefinite. But then the same holds for $C$,
hence for $\D_Y$, and hence for $\mathcal{C}_Y$. From above,
$\mathcal{C}_Y$ has at least two positive eigenvalues, and it has a
kernel of dimension at least $k-3$ because $\D_Y$ does.
\end{proof}

The next result starts from the calculation in Equation~\eqref{Emixed}.
The second determinant in the final expression is reminiscent of an
important tool used in proving the Kadison--Singer conjecture in
\cite{MSS2} (via Weaver's conjecture $KS_2$): the \textit{mixed
characteristic polynomial} of a set of positive semidefinite matrices
$A_j \in \R^{k \times k}$:
\begin{equation}\label{Emixeddefn}
\mu[A_1, \dots, A_m](z_0) := \left. \left( \prod_{j=1}^m (1 -
\partial_{z_j}) \right) \det \left( z_0 \Id_k + \sum_{j=1}^m z_j A_j
\right) \right|_{z_1 = \cdots = z_m = 0}.
\end{equation}

In the setting of Theorem~\ref{Tlorentz} where $\mathcal{C}_X$ is
positive semidefinite, the matrices $A_j$ in question --
see~\eqref{Emixed} -- are $\sqrt{\mathcal{C}_X} E_{jj}
\sqrt{\mathcal{C}_X}$, which are all positive semidefinite. It turns out
that their mixed characteristic polynomial is intimately connected to
both $\mathcal{C}_X$ and to $u_X(\cdot)$:

\begin{proposition}\label{Pmixed}
Suppose $(X,d)$ is a finite metric space such that $\D_X$ (equivalently,
$\mathcal{C}_X$) is positive semidefinite. Up to a real scalar, the mixed
characteristic polynomial of the matrices $\{
\sqrt{\mathcal{C}_X} E_{jj} \sqrt{\mathcal{C}_X} : 1 \leq j \leq k \}$
equals the characteristic polynomial of $\mathcal{C}_X$, as well as the
``inversion'' $z_0^k u_X(z_0^{-1})$ of the univariate blowup-polynomial.
\end{proposition}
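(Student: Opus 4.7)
The plan is to combine Equation~\eqref{Emixed} (which already expresses $\widetilde{p}_X$ as a certain determinant involving the matrices $A_j = \sqrt{\mathcal{C}_X} E_{jj} \sqrt{\mathcal{C}_X}$) with a one-line observation about how the differential operator $\prod_{j=1}^k (1 - \partial_{z_j})$ acts on multi-affine polynomials, and then do two direct substitutions. Concretely, from Equations~\eqref{Ecomput}--\eqref{Emixed} we already have
\[
\det\!\Bigl( z_0 \Id_k + \sum_{j=1}^k z_j A_j \Bigr)
= \det(z_0 \Id_k + \Delta_{\bf z}\, \mathcal{C}_X)
= \frac{\widetilde{p}_X(z_0, z_1, \ldots, z_k)}{\det(-\Delta_{\ba_X})}.
\]
Call this common polynomial $P(z_0, z_1, \ldots, z_k)$. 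Since each $z_j$ appears in $\Delta_{\bf z} \mathcal{C}_X$ only through the $j$th row, $P$ is multi-affine in $(z_1, \ldots, z_k)$.

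Next I would record the elementary identity $(1 - \partial_{z_j}) f(z_j) = f(z_j - 1)$ for any function $f$ that is affine in $z_j$. Since the $(1 - \partial_{z_j})$ commute, iterating this gives, for any multi-affine $Q(z_1, \ldots, z_k)$, the equality $\prod_j (1 - \partial_{z_j}) Q = Q(z_1 - 1, \ldots, z_k - 1)$; evaluating at ${\bf z} = {\bf 0}$ therefore yields $Q(-1, \ldots, -1)$. Applying this to $P$ and using the definition~\eqref{Emixeddefn} of the mixed characteristic polynomial,
\[
\mu[A_1, \ldots, A_k](z_0) \;=\; P(z_0, -1, \ldots, -1) \;=\; \det(z_0 \Id_k - \mathcal{C}_X),
\]
which is precisely the characteristic polynomial of $\mathcal{C}_X$ (here the scalar is even trivial).

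For the second identification, I would substitute the same values $z_1 = \cdots = z_k = -1$ directly into the defining formula for $\widetilde{p}_X$: since $\widetilde{p}_X(z_0, z_1, \ldots, z_k) = (-z_0)^k p_X(z_1/(-z_0), \ldots, z_k/(-z_0))$, one obtains
\[
\widetilde{p}_X(z_0, -1, \ldots, -1) = (-z_0)^k\, p_X(z_0^{-1}, \ldots, z_0^{-1}) = (-1)^k z_0^k\, u_X(z_0^{-1}).
\]
Combining this with the identity $\widetilde{p}_X(z_0, -1, \ldots, -1) = \det(-\Delta_{\ba_X}) \cdot \mu[A_1, \ldots, A_k](z_0)$ from the previous paragraph exhibits the two polynomials as equal up to the nonzero real scalar $(-1)^k / \det(-\Delta_{\ba_X})$, completing the proof. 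There is no real obstacle here; the only step that requires a small leap is recognizing that $\prod_j (1 - \partial_{z_j})$ collapses to the evaluation ${\bf z} \mapsto -{\bf 1}$ on multi-affine polynomials, and the remaining work is careful sign-bookkeeping for the scalar $\det(-\Delta_{\ba_X})$.
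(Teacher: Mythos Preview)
Your proposal is correct and follows essentially the same route as the paper's proof: both reduce the mixed characteristic polynomial to the evaluation of the multi-affine determinant $\det(z_0\Id_k + \Delta_{\bf z}\mathcal{C}_X)$ at $z_1=\cdots=z_k=-1$, and then identify this with $z_0^k u_X(z_0^{-1})$ up to scalar. Your packaging of the key step via the shift identity $(1-\partial_{z_j})f = f(\cdot - 1)$ for affine $f$ is a slightly slicker alternative to the paper's more general $MAP$-operator formulation, but the substance is the same.
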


\begin{proof}
We begin with the following claim: \textit{If $p(z_0, z_1, \dots, z_k)$
is a polynomial, then}
\begin{align}\label{Emap}
\begin{aligned}
&\ \left. \left( \prod_{j=1}^k (1 - \partial_{z_j}) \right) (p) (z_0,
z_1, \dots, z_k) \right|_{z_1 = \cdots = z_k = 0}\\
= &\ \left. MAP_{z_1, \dots, z_k}(p) (z_0, z_1, \dots, z_k) \right|_{z_1
= \cdots = z_k = -1}.
\end{aligned}
\end{align}

Here the operator $MAP_{z_1, \dots, z_k}$ kills all higher-order terms in
$z_1, \dots, z_k$, and retains only the ``multi-affine'' monomials
$\prod_{j=0}^k z_j^{m_j}$ with all $m_1, \dots, m_k \leq 1$ (but
arbitrary $m_0$).

To show the claim, write
\[
p(z_0, z_1, \dots, z_k) = \sum_{{\bf m} \in \mathbb{Z}_{\geq 0}^k}
a_{\bf m}(z_0) \prod_{j=1}^k z_j^{m_j},
\]
where each $a_{\bf m}(z_0)$ is a polynomial. Now note that applying a
differential ``monomial operator'' and then specializing at the origin
kills all but one monomials:
\[
\left. \prod_{i \in I} \partial_{z_i} \left( \prod_{j=1}^k z_j^{m_j} \right)
\right|_{z_1 = \cdots = z_k = 0} = {\bf 1}(m_i = 1\ \forall i \in I)
\cdot {\bf 1}(m_j = 0\ \forall j \not\in I), \
\forall I \subset \{ 1, \dots, k \}.
\]

Thus, applying $\prod_{i=1}^k (1 - \partial_{z_i})$ and specializing at
the origin will kill all but the multi-affine (in $z_1, \dots, z_k$)
monomials in $p$. Consequently, we may replace $p$ on the left-hand side
in~\eqref{Emap} by
\[
q(z_0, \dots, z_k) := MAP_{z_1, \dots, z_k}(p)(z_0, z_1, \dots, z_k) =
\sum_{J \subset \{ 1, \dots, k \}} a_J(z_0) \prod_{j \in J} z_j,
\]
say, where each $a_J(z_0)$ is a polynomial as above. Now compute:
\begin{align*}
&\ \left. \left( \prod_{j=1}^k (1 - \partial_{z_j}) \right) q(z_0, z_1,
\dots, z_k) \right|_{z_1 = \cdots = z_k = 0}\\
= &\ \left. \sum_{I \subset \{ 1, \dots, k \}} (-1)^{|I|} \prod_{i \in I}
\partial_{z_i} \cdot \sum_{J \subset \{ 1, \dots, k \}} a_J(z_0) \prod_{j
\in J} z_j \right|_{z_1 = \cdots = z_k = 0}\\
= &\ \sum_{J \subset \{ 1, \dots, k \}} (-1)^{|J|} a_J(z_0)
= q(z_0, -1, -1, \dots, -1).
\end{align*}

This shows the claim. Using this, we and assuming that $\D_X$ is positive
semidefinite, we show the first assertion -- that the mixed
characteristic polynomial of the matrices $\sqrt{\mathcal{C}_X} E_{jj}
\sqrt{\mathcal{C}_X}$ is the characteristic polynomial of
$\mathcal{C}_X$:
\begin{align*}
\mu[\{ \sqrt{\mathcal{C}_X} E_{jj} \sqrt{\mathcal{C}_X} : 1 \leq j \leq k
\}](z_0) = &\ \det \left( z_0 \Id_k + \sum_{j=1}^k (-1)
\sqrt{\mathcal{C}_X} E_{jj} \sqrt{\mathcal{C}_X} \right)\\
= &\ \det(z_0 \Id_k - \mathcal{C}_X).
\end{align*}

This shows one assertion. Next, using~\eqref{Ecomput} specialized at $z_1
= \cdots = z_k = -1$, this expression also equals (up to a scalar) the
inversion of $u_X$, as asserted:
\begin{align*}
= &\ \det(-\Delta_{\ba_X})^{-1} \det(z_0 \cdot (-\Delta_{\ba_X}) - \D_X)
= \det(\Delta_{\ba_X})^{-1} z_0^k \det(\Delta_{\ba_X} + z_0^{-1} \D_X)\\
= &\ \det(\Delta_{\ba_X})^{-1} z_0^k u_X(z_0^{-1}). \qedhere
\end{align*}
\end{proof}

We conclude this section by briefly discussing another invariant of the
metric space $(X,d)$, one that is related to the matrix $\mathcal{C}_X$
and the polynomial $u_X(\cdot)$.

\begin{definition}
Given a finite metric space $(X,d)$, define $\alpha_{\max}(X) \in \R$ to
be the largest root, or ``maxroot'', of (the real-rooted polynomial)
$u_X(\cdot)$.
\end{definition}

Notice that $u_X(x+\alpha_{\max}(X))$ has only non-positive roots, and
(say after dividing by its leading coefficient,) only non-negative
coefficients. In particular, these coefficients yield a (strongly)
log-concave sequence for every finite metric space -- e.g., for every
finite connected unweighted graph.

\begin{proposition}\label{Pmaxroot}
Given a finite metric space $X = \{ x_1, \dots, x_k \}$ with $k \geq 2$,
the root $\alpha_{\max}(X)$ is positive. In particular, $u_X(\cdot)$ has
monomials with both positive and negative coefficients.

Furthermore, given any integer $n \geq 1$, we have
\[
\alpha_{\max}(X[n(1,1,\dots,1)]) = \frac{\alpha_{\max}(X)}{n}.
\]
\end{proposition}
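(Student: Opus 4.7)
The strategy is to express $u_X(\cdot)$ in closed form via the spectrum of the symmetric matrix $\mathcal{C}_X$, and then piggyback on Proposition~\ref{PCmatrix} for the first assertion and on Lemma~\ref{Lwellbehaved}(2) for the second. Starting from the determinantal factorization derived inside the proof of Theorem~\ref{Tstable}, namely
\[
p_X({\bf z}) \;=\; \det(-\Delta_{\ba_X}) \cdot \prod_{j=1}^k z_j \cdot \det\!\bigl(\mathcal{C}_X - \Delta_{\bf z}^{-1}\bigr),
\]
I specialize to $z_1 = \cdots = z_k = n$ and absorb the factors of $n$ into the determinant, obtaining the polynomial identity
\[
u_X(n) \;=\; \det(-\Delta_{\ba_X}) \cdot \det(n\mathcal{C}_X - \Id_k).
\]
The prefactor $\det(-\Delta_{\ba_X}) = \prod_{x \in X} 2 d(x, X \setminus \{ x \})$ is strictly positive, so the roots of $u_X$ are precisely the reciprocals of the nonzero eigenvalues of $\mathcal{C}_X$.

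For the positivity of $\alpha_{\max}(X)$, I invoke Proposition~\ref{PCmatrix}: for $k \geq 2$ the real symmetric matrix $\mathcal{C}_X$ has at least two strictly positive eigenvalues, and the reciprocal of the smallest of these is a positive root of $u_X$. Hence $\alpha_{\max}(X) > 0$. The coefficient-sign claim then follows at once, since $u_X(0) = p_X({\bf 0}) \neq 0$ by Theorem~\ref{Tmetricmatrix}: a polynomial whose coefficients all share a common sign would be strictly sign-constant on $(0,\infty)$, contradicting the positive root just exhibited.

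For the scaling identity, set $Y := X[n(1,\dots,1)]$ and apply Lemma~\ref{Lwellbehaved}(2) to $X$ with blowup tuple $\bn = n(1,\dots,1)$ and multiplier tuple ${\bf m} := (m, m, \dots, m)$ of total length $n|X|$. Each coordinate of the induced tuple $\bn'$ from that lemma collapses to $\bn'_x = \sum_{i=1}^n m = nm$, yielding
\[
u_Y(m) \;=\; p_X(nm, \dots, nm) \cdot \prod_{x \in X} a_x^{n-1} \;=\; C \cdot u_X(nm),
\]
where $C := \prod_{x \in X} a_x^{n-1}$ is a nonzero real constant (each $a_x \neq 0$ as $k \geq 2$). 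Consequently the root sets of $u_Y$ and $u_X$ differ by the uniform rescaling $m \leftrightarrow nm$, and taking maxima gives $\alpha_{\max}(X[n(1,\dots,1)]) = \alpha_{\max}(X)/n$. The only mildly delicate step in this plan is the rewrite $u_X(n) = \det(-\Delta_{\ba_X})\det(n\mathcal{C}_X - \Id_k)$, which is however a direct specialization of the computation already performed in proving Theorem~\ref{Tstable}; the remainder is bookkeeping, so I anticipate no substantive obstacle.
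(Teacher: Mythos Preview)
Your proposal is correct. The first two parts---relating the roots of $u_X$ to the reciprocals of the nonzero eigenvalues of $\mathcal{C}_X$ via the factorization $u_X(n) = \det(-\Delta_{\ba_X})\det(n\mathcal{C}_X - \Id_k)$, and then invoking Proposition~\ref{PCmatrix}---are essentially the paper's own argument (the paper writes the equivalent $u_X(\alpha) = \alpha^k \det(-\Delta_{\ba_X})\det(\mathcal{C}_X - \alpha^{-1}\Id_k)$). For the sign assertion the paper instead cites Theorem~\ref{Tmetricmatrix} directly (the constant and linear terms already have opposite signs), but your positive-root argument is equally valid.

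Where you genuinely diverge is in the scaling identity. The paper proves it spectrally: it uses Lemma~\ref{Lwellbehaved}(3) to write $\mathcal{C}_{X[n(1,\dots,1)]} = \mathcal{C}_X \otimes {\bf 1}_{n \times n}$, reads off the Kronecker-product spectrum $\sigma(\mathcal{C}_X) \cdot \{n,0,\dots,0\}$, and then chases the correspondence between positive eigenvalues of $\mathcal{C}$ and positive roots of $u$. Your route via Lemma~\ref{Lwellbehaved}(2)---specializing the polynomial identity $p_{X[\bn]}({\bf m}) = p_X(\bn')\prod_x a_x^{n_x-1}$ to $\bn = (n,\dots,n)$ and ${\bf m} = (m,\dots,m)$ to obtain $u_Y(m) = C \cdot u_X(nm)$---is more direct: it bypasses the spectral machinery entirely and gives the full root-set rescaling (not just the positive roots) in one line. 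The paper's approach has the advantage of making the eigenvalue structure of $\mathcal{C}_{X[\bn]}$ explicit, which feeds into the surrounding discussion; yours is the shorter path to the stated conclusion.
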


For example, for $X$ the $k$-point discrete metric space (so $\D_X = {\bf
1}_{k \times k} + \Id_k$ is positive definite) -- or equivalently, the
complete graph $K_k$ -- we have $\alpha_{\max}(X) = 2$, by the
computation in Equation~\eqref{Ecomplete}.

\begin{proof}
Recall that $u_X(x) = \det(\Delta_{\ba_X} + x \D_X)$, and so $u_X(0) =
\prod_{i=1}^k a_i \neq 0$. Now if $\alpha$ is any root of $u_X$ (hence
real and nonzero), we obtain similarly to the preceding proofs in this
section:
\begin{equation}\label{Ecomputation}
0 = u_X(\alpha) = \alpha^k \det (-\Delta_{\ba_X}) \cdot
\det(\mathcal{C}_X - \alpha^{-1} \Id_k),
\end{equation}
where $\mathcal{C}_X = (-\Delta_{\ba_X})^{-1/2} \D_X
(-\Delta_{\ba_X})^{-1/2}$ as above. But since $\mathcal{C}_X$ has at
least two positive eigenvalues (by Proposition~\ref{PCmatrix}), $u_X$ has
at least two positive roots, and hence $\alpha_{\max}(X) > 0$.

The next statement follows directly by Theorem~\ref{Tmetricmatrix}, which
implies that the constant and linear terms in $u_X$ have opposite signs.
More information is obtained by using Descartes' rule of signs (see
e.g.~\cite{Descartes}), which implies that there are at least two sign
changes in the coefficients of $u_X$.

It remains to compute $\alpha_{\max}(X[n(1,1,\dots,1)])$. Using
Lemma~\ref{Lwellbehaved}(3),
\[
\D_{X[n(1,1,\dots,1)]} = \D_X \otimes {\bf 1}_{n \times n}, \qquad
\mathcal{C}_{X[n(1,1,\dots,1)]} = \mathcal{C}_X \otimes {\bf 1}_{n \times
n},
\]
the Kronecker products, under a suitable relabeling of indices. 
In particular, the eigenvalues of the latter Kronecker product are the
Minkowski product of the spectra $\sigma(\mathcal{C}_X)$ and $\sigma({\bf
1}_{n \times n}) = \{ n, 0, 0, \dots, 0 \}$. We now make a series of
reductions:

$\alpha \in \R$ is a positive root of $u_{X[n(1,\dots,1)]}$,

if and only if $\alpha^{-1}$ is a positive eigenvalue of
$\mathcal{C}_{X[n (1,1,\dots,1)]}$ (using~\eqref{Ecomputation}),

if and only if (from above) $\alpha^{-1}$ equals $n$ times a positive
eigenvalue of $\mathcal{C}_X$,

if and only if $\frac{1}{n \alpha}$ is a positive eigenvalue of
$\mathcal{C}_X$,

if and only if $n \alpha$ is a positive root of $u_X$ (again
using~\eqref{Ecomputation}).

\noindent Since $\alpha_{\max}(X) > 0$, this proves the result.
\end{proof}

\section{Monoid morphism computations towards Theorem~\ref{Tmonoid}}

Here we show parts~(1) and~(3) of Theorem~\ref{Tmonoid}, for
completeness. For~(1), begin by using Lemma~\ref{Lmonoid} to check that
$((1,\dots,1)^T, 0_{k \times k})$ is indeed the identity element of
$\mathcal{M}_{\bf n}(R)$; clearly, it is sent under the given map $\Psi :
(\ba, D) \mapsto M(\ba,D)$ to $\Id_K$. Next, given $(\ba,D),(\ba',D') \in
\mathcal{M}_{\bf n}(R)$, and $1 \leq i \leq k$, first compare the $(i,i)$
blocks of their $\Psi$-images in part~(1):
\[
M((\ba,D) \circ (\ba',D'))_{ii} = a_i a'_i \Id_{n_i} +
\left( a_i d'_{ii} + d_{ii} a'_i + \sum_{l=1}^k d_{il} \cdot {\bf q}_l^T
{\bf p}_l \cdot d'_{li} \right) {\bf p}_i {\bf q}_i^T,
\]
whereas on the other side,
\begin{align*}
&\ (M(\ba,D) M(\ba',D'))_{ii}\\
= &\ (a_i \Id_{n_i} + d_{ii} {\bf p}_i {\bf q}_i^T)
(a'_i \Id_{n_i} + d'_{ii} {\bf p}_i {\bf q}_i^T) +
\sum_{l \neq i} d_{il} {\bf p}_i {\bf q}_l^T \cdot {\bf p}_l {\bf q}_i^T
\cdot d'_{li}\\
= &\ (a_i \Id_{n_i} + d_{ii} {\bf p}_i {\bf q}_i^T)
(a'_i \Id_{n_i} + d'_{ii} {\bf p}_i {\bf q}_i^T) +
{\bf p}_i {\bf q}_i^T \cdot \sum_{l \neq i} d_{il} \cdot {\bf q}_l^T
{\bf p}_l \cdot d'_{li}.
\end{align*}
It is easily verified that these two quantities are equal.

For the off-diagonal blocks, fix $1 \leq i \neq j \leq k$ and compute:
\[
M((\ba,D) \circ (\ba',D'))_{ij} = \left( a_i d'_{ij} + d_{ij} a'_j +
\sum_{l=1}^k d_{il} \cdot {\bf q}_l^T {\bf p}_l \cdot d'_{lj} \right)
{\bf p}_i {\bf q}_j^T,
\]
whereas on the other side,
\begin{align*}
&\ (M(\ba,D) M(\ba',D'))_{ij}\\
= &\ \left( a_{ii} \Id_{n_i} \cdot d'_{ij} {\bf p}_i {\bf q}_j^T +
d_{ii} {\bf p}_i {\bf q}_i^T \cdot d'_{ij} {\bf p}_i {\bf q}_j^T
\right)\\
&\ + \left( d_{ij} {\bf p}_i {\bf q}_j^T \cdot a'_{jj} \Id_{n_j} +
d_{ij} {\bf p}_i {\bf q}_j^T \cdot d'_{jj} {\bf p}_j {\bf q}_j^T
\right)
+ {\bf p}_i {\bf q}_j^T \cdot \sum_{l \neq i,j} d_{il} d'_{li} {\bf
q}_l^T {\bf p}_l.
\end{align*}
Once again, it is easy to check that both expressions are equal.

We now show Theorem~\ref{Tmonoid}(3); denote the right-hand side in it by
\[
X := M((a_1^{-1}, \dots, a_n^{-1})^T, - \Delta_\ba^{-1} D N(\ba,
D)^{-1}),
\]
noting that all $a_i \in R^\times$. To show~(3), using~(1) it suffices to
show that the composition on the other side yields the identity, i.e.,
\[
((a_1^{-1}, \dots, a_n^{-1})^T, -\Delta_\ba^{-1} D N(\ba,
D)^{-1}) \circ (\ba, D) = ((1,\dots,1)^T, 0_{k \times k}).
\]
The equality of the first components is obvious; now check using
Lemma~\ref{Lmonoid} that the second component on the left-hand side
equals
\[
\Delta_\ba^{-1} \cdot D + (-\Delta_\ba^{-1} D N(\ba, D)^{-1})
N(\ba, D) = 0_{k \times k},
\]
which concludes the proof. \qed

\subsection*{Acknowledgments}
The authors would like to thank Alexander Belton, Petter Br\"and\'en,
Carolyn Chun, Karola Meszaros, Aaradhya Pandey, Alan Sokal, and Cynthia
Vinzant for useful discussions. We also thank the anonymous referee for
carefully reading the manuscript and providing useful feedback.
P.N.~Choudhury was partially supported by
INSPIRE Faculty Fellowship research grant\hfill\break
DST/INSPIRE/04/2021/002620 (DST, Govt.\ of India),
IIT Gandhinagar Internal Project grant IP/IITGN/MATH/PNC/2223/25,
C.V.\ Raman Postdoctoral Fellowship 80008664 (IISc), and
National Post-Doctoral Fellowship (NPDF) PDF/2019/000275 from SERB
(Govt.\ of India).
A.~Khare was partially supported by
Ramanujan Fellowship grant SB/S2/RJN-121/2017,
MATRICS grant MTR/2017/000295, and
SwarnaJayanti Fellowship grants SB/SJF/2019-20/14 and DST/SJF/MS/2019/3
from SERB and DST (Govt.~of India),
by a Shanti Swarup Bhatnagar Award from CSIR (Govt.\ of India),
by grant F.510/25/CAS-II/2018(SAP-I) from UGC (Govt.~of India), and
by the DST FIST program 2021 [TPN--700661].




\end{document}